\newcommand{\x}{\mathbf{x}}
\newcommand{\y}{\mathbf{y}}
\newtheorem{lem}{Lemma}[section]
\newtheorem{thm}{Theorem}[section]
\newtheorem{rem}{Remark}[section]
\newtheorem{exmp}{Example}
\numberwithin{equation}{section}
\numberwithin{figure}{section}
\newcommand{\jl}{[\![}
\newcommand{\jr}{]\!]}
\title{
An extension of the box method discrete fracture model (Box-DFM) to include low-permeable barriers with minimal additional degrees of freedom}
\author{
Ziyao Xu\footnotemark[1] \and Dennis Gläser\footnotemark[2]
}
\date{}
\begin{document}

\maketitle
\renewcommand{\thefootnote}{\fnsymbol{footnote}}
\footnotetext[1]{Department of Applied and Computational Mathematics and Statistics,
University of Notre Dame, Notre Dame, IN 46556, USA. E-mail: zxu25@nd.edu}
\footnotetext[2]{Institute for Modelling Hydraulic and Environmental Systems, University of Stuttgart, Pfaffenwaldring 61, 70569 Stuttgart, Germany. E-mail: dennis.glaeser@iws.uni-stuttgart.de}

\begin{center}
\small
\begin{minipage}{0.9\textwidth}
\textbf{Abstract.}
The box method discrete fracture model (Box-DFM) is {\color{black} an important finite volume-based discrete fracture model (DFM)} 
to simulate flows in fractured porous media.
In this paper, we investigate a simple but effective extension of the box method discrete fracture model to include low-permeable barriers.
The method remains identical to the traditional Box-DFM \cite{monteagudo2004control, reichenberger2006mixed} in the absence of barriers.
The inclusion of barriers requires only minimal additional degrees of freedom to accommodate pressure {\color{black}discontinuities}
and necessitates minor modifications to the original coding framework of the Box-DFM.
We use extensive numerical tests on published benchmark problems and comparison with existing finite volume DFMs to demonstrate the validity and
performance of the method.

\medskip
\textbf{Key words.} box method, discrete fracture model, fractured porous media, extension, low-permeable barriers

\medskip

\end{minipage}
\end{center}
\setlength{\parindent}{2em}

\pagenumbering{arabic}

\section{Introduction}\label{Sect:intro}

As a result of geological processes and human activities, cracks are widely distributed in subsurface rocks. 
Depending on the degree of precipitation, cracks can act either as highly permeable, flow-preferential channels or as low-permeability, flow-blocking barriers; both have a significant impact on the hydraulic properties of rocks. 
Understanding fluid flow in fractured rocks is vital for many engineering fields, such as hydrocarbon reservoir engineering, hydraulic fracturing, subsurface contaminant transport and geothermal energy extraction. 
Hence, this area has attracted great interest from researchers.

Due to the extreme contrast in permeability and lateral scale between fractures and their surrounding rocks, accurately and efficiently modeling fluid flow in fractured porous media is highly nontrivial. In recent decades, many effective approaches have been developed to address this challenge. These approaches can be divided into two categories: continuum models and discrete fracture models. In continuum models, fractures are either homogenized into the surrounding matrix, as seen in the effective permeability approach \cite{oda1985permeability, chen2015new}, or integrated into a separate continuum that exchanges flux with the matrix, as in the dual-porosity dual-permeability models \cite{barenblatt1960basic, warren1963behavior} and multi-continuum models \cite{wu1988multiple, leme1998multiple}. Continuum models are preferable when fractures are small, dense, and fully connected within rocks. However, model error becomes significant when large, individual fractures dominate the flow fields. As a compromise, discrete fracture models (DFMs) explicitly account for the effects of individual fractures as lower-dimensional geometries, thus providing a more accurate description of flows in fractured media.

Various DFMs have been developed based on different numerical methods.
Early works on DFMs \cite{noorishad1982upstream, baca1984modelling} were established based on the Galerkin finite element methods (FEM-DFM) on {\color{black} conforming grids where fractures are aligned with the faces of grid cells.}
In these models, the {\color{black} stiffness matrices of flows in fractures} are superposed on that of the porous matrix {\color{black}(see also \cite{kim2000finite, karimi2001numerical})}.
Another approach for simulating subsurface flows is the class of control volume methods. 
Based on the cell-centered control volume methods with two-point flux approximation (TPFA), the TPFA-DFM was developed \cite{karimi2004efficient}. 
This method treats fracture elements as geometrically low-dimensional but conceptually equi-dimensional control volumes and computes their flux exchanges with other control volumes based on transmissibility coefficients. 
It is further extended to MPFA-DFM \cite{sandve2012efficient, ahmed2015control, glaser2017discrete}, based on the multi-point flux approximation (MPFA) \cite{aavatsmark1998discretizationI, aavatsmark1998discretizationII}, to allow for full tensor permeabilities
on grids that are not K-orthogonal. 
The Box methods \cite{bank1987some, hackbusch1989first, helmig1997multiphase}, where the control volumes are vertex-centered boxes and the function space is the {\color{black} linear} Lagrange finite element, 
represent another widely used class of control volume methods. 
Based on the Box methods, the Box-DFM has been developed \cite{monteagudo2004control, reichenberger2006mixed}, where the grids are conforming, and pressure continuity across fractures is assumed, as also studied in \cite{monteagudo2007control, monteagudo2007comparison}. 
Models based on other numerical methods on conforming meshes are also actively studied; for example, the mixed finite element methods (MFEM) \cite{martin2005modeling, hoteit2005multicomponent, hoteit2008efficient, zidane2014efficient}, discontinuous/enriched Galerkin methods (DG/EG) \cite{antonietti2019discontinuous, mozolevski2021high, chen2023discontinuous, kadeethum2020flow}, vertex approximate gradient methods \cite{brenner2016gradient, brenner2017gradient}, and mortar approaches \cite{frih2012modeling, boon2018robust}. 
To alleviate the grid constraints when fracture networks are complex, the non-conforming discrete fracture models are also studied in the literature, e.g., the embedded discrete fracture model (EDFM) \cite{li2008efficient, moinfar2014development, fumagalli2016upscaling, ctene2017projection}, Lagrange multiplier methods \cite{koppel2019lagrange, schadle20193d}, XFEM-DFM \cite{d2012mixed, schwenck2015dimensionally, flemisch2016review}, reinterpreted discrete fracture model \cite{xu2020hybrid, xu2023hybrid, karimi2001numerical}, and immersed finite element DFM \cite{zhao2023discrete}. 

Among the aforementioned discrete fracture models, control volume DFMs are especially popular due to their local mass conservation, flexibility for complex domains, and ease of implementation. 
Cell-centered control volume DFMs have advantages in coupling fracture elements of arbitrary permeability with matrix elements, as additional degrees of freedom (DoFs) are assigned to fracture elements. 
However, cell-centered schemes require more DoFs on simplex grids than vertex-centered schemes. The number of cell elements is roughly twice that of vertices on triangular meshes, and the ratio is {\color{black} approximately six} on tetrahedral meshes. Moreover, MPFA methods typically produce more nonzero entries in the resulting stiffness matrix than box methods \cite{glaser2022comparison}. 
In \cite{haegland2009comparison}, a comparison between MPFA-DFM and Box-DFM was presented. 
The Box method is capable of handling full tensor permeability on unstructured meshes as well as MPFA methods, but with fewer DoFs. Furthermore, the stiffness matrix of Box-DFM is symmetric positive-definite, which allows for the use of many fast linear solvers.
However, the Box-DFM can only be used for high-permeable fractures, as discussed in \cite{haegland2009comparison}.
Extended Box-DFM to account for wider type of fractures in a uniform way has been developed in \cite{glaser2022comparison}, where the equations of fracture flows are established and coupled with the matrix equations based on the interface model established in \cite{martin2005modeling}.
A treatment of assigning individual equations for fracture variables was also developed in \cite{bogdanov2003effective}.
Both methods are not identical to the Box-DFM \cite{monteagudo2004control, reichenberger2006mixed} when only high-permeable fractures present.

In this paper, we propose a simple yet effective extension of the Box-DFM to include low-permeable barriers. The model ignores the flow in the tangential direction along barriers and adopts a broken Lagrange finite element space as the trial functions to accommodate pressure discontinuity. The low-dimensional Darcy's law is adopted to compute the flux across cell interfaces aligned with low-permeable barriers. Since no individual equations and variables are assigned to fractures and barriers, the inclusion of barriers requires only minimal additional degrees of freedom, and the model is identical to the original Box-DFM in the absence of barriers. Moreover, we prove that the resulting stiffness matrix remains symmetric positive-definite, as in the original Box-DFM. In the numerical section, we test the model on published benchmarks to demonstrate that such a simple treatment for low-permeable barriers achieves high accuracy.
The range of validity is also studied in this section.

The rest of the paper is organized as follows. 
In Section \ref{Sect:model}, we describe the model problem of steady-state single-phase flow through a fractured porous medium. 
In Section \ref{Sect:algorithm}, we first provide a review of the classic Box-DFM. 
Subsequently, we introduce the algorithm of the extended Box-DFM to include barriers and explore its properties.
{\color{black}In Section \ref{Sect:tests}, the validity and performance of the extended Box-DFM are tested by published benchmarks before a conclusion and outlook in Section \ref{Sect:summary}.}
The source code to all examples implemented in DuMux can be found at \url{https://git.iws.uni-stuttgart.de/dumux-pub/xu2024}.

\section{Problem description}\label{Sect:model}
We consider the steady-state single-phase flow through a porous medium governed by the Darcy's law.

In {\color{black} $n$ dimensional space, $ n\in\{2, 3\}$}, 
the equi-dimensional model is formulated as
\begin{equation}\label{eq:EquiDmodel}
-\nabla\cdot\left(\mathbf{K}\nabla p\right)=q, \quad {\color{black} \text{in}}~\Omega,
\end{equation}
subject to the boundary conditions
\begin{equation}\label{eq:BdrCond}
p=g_D ~\text{on}~ \Gamma_D, \quad -(\mathbf{K}\nabla p)\cdot\mathbf{n}= g_N ~\text{on}~\Gamma_N:=\partial\Omega\setminus\Gamma_D,
\end{equation}
where {\color{black}$\nabla$} is the gradient operator, {\color{black}$\Omega\subset\mathbb{R}^n$} is an open bounded domain, $\mathbf{K}$ is the permeability tensor of porous media, $p$ is the pressure of fluid, $q$ is the source term, $\Gamma_D$ and $\Gamma_N$ are the Dirichlet and Neumann boundaries, respectively, and $\mathbf{n}$ is the unit outer normal on $\partial\Omega$.

In the absence of fractures and barriers, the problem \eqref{eq:EquiDmodel}, \eqref{eq:BdrCond} is generally an inhomogeneous anisotropic elliptic equation, which can be solved by traditional numerical methods on regular meshes, e.g., the Box-methods \cite{hackbusch1989first, helmig1997multiphase} or the cell-centered finite volume methods \cite{aavatsmark1998discretizationI, aavatsmark1998discretizationII}.
However, in the presence of fractures and barriers, numerical methods based on the equi-dimensional setting may not be well-suited due to the extreme contrast in geometry and permeability between fractures/barriers compared to the porous matrix.

\begin{figure}[!htbp]
  \centering
  \begin{tikzpicture}[scale=1.0]
    \draw[thick] (-4,-2) -- (4,-2);
    \draw[thick] (-4,2) -- (4,2);
    \draw[thick] (-4,-2) -- (-4,2);
    \draw[thick] (4,-2) -- (4,2);
    \draw[thick] (-2,-2) -- (2,2);

    \draw[-Latex] (-1,-1) -- ++(0.7,-0.7) node[above right] {$\mathbf{n}^-$};
    \draw[-Latex] (1,1) -- ++(-0.7,0.7) node[below left] {$\mathbf{n}^+$};    

    \node at (0.3,0) {$\gamma$};

    \coordinate [label=${\Omega}_-$] (b) at(-2,-0.3) ;
    \coordinate [label=${\Omega}_+$] (b) at(2,-0.3) ;
  \end{tikzpicture}
 \caption{The geometry of an interface model {\color{black} in $\mathbb{R}^2$}. $\gamma$ represents the interface of a fracture or barrier. $\Omega^{\pm}$ and $\mathbf{n}^{\pm}$ are the bulk matrix regions and unit outer normal from each region, respectively.}
 \label{fig:hybridDomain}
\end{figure}
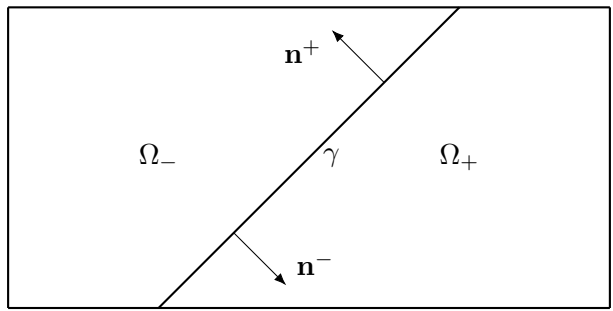

The hybrid-dimensional model, which treats the fractures and barriers as co-dimension one manifolds, provides an efficient way to explicitly take into account the effects of fractures and barriers.
Consider a fractured porous medium $\Omega$ subdivided into $\Omega^{\pm}$ by the $(n-1)$-dimensional
fracture/barrier $\gamma$.
For an illustration, see Figure \ref{fig:hybridDomain}.
{\color{black}In this paper, we consider the governing equation} 
\begin{equation}\label{eq:HybridDmodel1}
-\nabla\cdot\left(\mathbf{K}_m^{\pm}\nabla p^{\pm}\right)=q^{\pm}, \quad \text{in}~\Omega^{\pm},
\end{equation}
in the porous matrix, and the interface condition
\begin{equation}\label{eq:HybridDmodel2}
\begin{cases}
    p_f=p^{\pm},\\
    u_f=-k_f{\color{black}\nabla_{\tau}p_f},\\
    a{\color{black}\nabla_\tau\cdot u_f}=-[\mathbf{K}_m\nabla p\cdot \mathbf{n}],
\end{cases}
{\color{black}\text{on}~}\gamma,
\end{equation}
{\color{black}if $\gamma$ is a high-permeable fracture}, {\color{black}or}
\begin{equation}\label{eq:HybridDmodel3}
-\mathbf{K}^-_m\nabla p^-\cdot\mathbf{n}^-=
\mathbf{K}^+_m\nabla p^+\cdot\mathbf{n}^+=-k_b\frac{p^+-p^-}{a},\quad {\color{black}\text{on}~}\gamma,
\end{equation}
{\color{black}if $\gamma$ is a low-permeable barrier.}
{\color{black} Here, we denote by $\mathbf{K}_{m}^\pm$, $p^{\pm}$ and $q^{\pm}$ the permeability, pressure and source on $\Omega^{\pm}$, respectively.}
{\color{black}Moreover,} $k_f$ is the permeability of high-permeable fractures, $k_b$ is the permeability of low-permeable barriers, 
{\color{black}$\nabla_\tau$ is the gradient operator on the tangent space of $\gamma$,}
$[\mathbf{K}_m\nabla p\cdot \mathbf{n}]=\mathbf{K}_m^-\nabla p^-\cdot \mathbf{n}^- + \mathbf{K}_m^+\nabla p^+\cdot \mathbf{n}^+$ is the jump of normal flux across $\gamma$, $\mathbf{n}^{\pm}$ is the unit outer normal on $\gamma$ from $\Omega^{\pm}$, and $a$ is the aperture of $\gamma$. 

\begin{rem}
The hybrid-dimensional model \eqref{eq:HybridDmodel1}, \eqref{eq:HybridDmodel2}, \eqref{eq:HybridDmodel3} considered in this paper is an important limit case of the general interface model established in \cite{martin2005modeling}.
The pressure continuity in the fractures is assumed, and the tangential flow in the barriers is neglected here.
{\color{black}See discussions about the equations (4.5), (4.6) therein.}

We would like to emphasize that we do not necessarily assume the fracture/barrier $\gamma$ split the whole domain $\Omega$ into two separate pieces.
The geometric setting above is only for the ease of illustration. 
We shall drop the superscripts $\pm$ in {\color{black}the porous matrix equation} \eqref{eq:HybridDmodel1}, and use $\pm$ only for the interface conditions in the next section.
\end{rem}

\section{Algorithm}\label{Sect:algorithm}

{\color{black} In this section, we review the algorithm of the traditional Box-DFM and propose the inclusion of low-permeable barriers. 
The illustration is based on two-dimensional space {\color{black} with triangular grids} for convenience, but the extension to three-dimensional space {\color{black} or quadrilateral grids} is straightforward.
}

\subsection{A review of the traditional Box-DFM}

We denote the computational domain by $\Omega_h$ and consider a Delaunay triangulation $\{T:T\in\mathcal{T}\}$ of the domain, where $T$ are non-overlapping triangular cells such that $\cup_{T\in\mathcal{T}}T=\Omega_h$.
The triangular mesh $\{T:T\in\mathcal{T}\}$ is called the primal mesh.
The corresponding dual mesh $\{B:B\in\mathcal{B}\}$ is obtained through connecting the midpoints of the sides of $T$ to its centroid, see Figure \ref{fig:mesh_a} for illustration, where the boxes $B\in\mathcal{B}$ in the dual mesh are centered at vertices of the primal mesh.

We define the $P^1$-Lagrange finite element space on the mesh $\{T:T\in\mathcal{T}\}$ as
\begin{equation}\label{eq:FEMSpace}
V_h=\{v\in C(\Omega_h): v|_{T}\in P^1(T), T\in\mathcal{T}\},
\end{equation}
where $P^1(T)$ is the linear function space on the cell $T$ and $C(\Omega_h)$ is the space of continuous functions on $\Omega_{h}$.

We first briefly review the box method for the equi-dimensional model \eqref{eq:EquiDmodel}.
We look for the numerical solution $p=\sum_{i=1}^{N}p_{i}\psi_{i}(\mathbf{x})\in V_{h}$, such that
\begin{equation}\label{eq:Box0}
-\int_{\partial B} \mathbf{K}\nabla p \cdot \mathbf{n}ds = \int_{B} q \, dx dy, \quad \forall B\in\mathcal{B},
\end{equation}
where $N$ is the number of boxes in the dual mesh (also the number of vertices in the primal mesh), $p_i$ is the nodal value of the solution at the vertex $\mathbf{x}_i$, and $\psi_i(\mathbf{x})$ is the shape functions serving as the basis of the $P^1$-Lagrange finite element space at $\mathbf{x}_i$.
The scheme \eqref{eq:Box0} is obtained by integrating \eqref{eq:EquiDmodel} over every box $B\in\mathcal{B}$ and applying the Gauss divergence theorem.
For brevity, we omit the discussion of the boundary conditions \eqref{eq:BdrCond} here and henceforth.

We then adopt the traditional Box-DFM as described in \cite{monteagudo2004control, reichenberger2006mixed} to explicitly take into account the flow contributed by the high-permeable fracture.
The fractures are aligned with the edges of the primal mesh and intersect the dual mesh at midpoints of the sides of primal cells, as shown in Figure \ref{fig:mesh_b}.
Under the assumption of pressure continuity and through the superposition of fracture flows, the Box-DFM for the hybrid-dimensional model \eqref{eq:HybridDmodel1}, \eqref{eq:HybridDmodel2} is to find the numerical solution $p\in V_{h}$, such that
\begin{equation}\label{eq:BoxF}
-\int_{\partial B} \mathbf{K}_m\nabla p \cdot \mathbf{n}ds 
-\sum_{j=1}^{k} a k_{f} \frac{\partial p}{\partial \nu_j}(\mathbf{x}^F_j)
= \int_{B} q \, dx dy, \quad \forall B\in\mathcal{B},
\end{equation}
where $\mathbf{x}^F_{j}, j=1,2,\ldots,k$ are points where the fractures intersect $\partial B$, $a$ and $k_{f}$ are aperture and permeability of the fracture, and $\nu_{j}$ is the unit tangential vector of the fracture at $\mathbf{x}^F_{j}$ pointing from inside of $B$ to its outside.

\begin{rem}
There are multiple ways to derive the scheme \eqref{eq:BoxF}. 
For example, one can directly plug the permeability tensor $\mathbf{K}$, established in \cite{xu2020hybrid}, or the effective permeability tensor derived in \cite{oda1985permeability}, into the model \eqref{eq:Box0} to attain \eqref{eq:BoxF}.
{\color{black} The scheme can also be directly derived from the interface model \eqref{eq:HybridDmodel1} and \eqref{eq:HybridDmodel2}, by establishing separate equations for the sub-boxes divided by the fractures, summing up the equations belonging to the same box, substituting the interface condition, and applying the fundamental theorem of calculus on the fracture edge.}
\end{rem}

\begin{figure}[htbp]
 \centering
 \begin{subfigure}[b]{0.6\textwidth}
  \begin{tikzpicture}[scale=1.0]
    \draw[thick] (0,0) -- (0.6,2);
    \draw[thick] (0,0) -- (2.2,0.6);
    \draw[thick] (0,0) -- (1.7,-1.5);
    \draw[thick] (0,0) -- (-0.5,-1.8);
    \draw[thick] (0,0) -- (-2.1,-0.8);
    \draw[thick] (0,0) -- (-1.5,1.5);
    \foreach \x/\y/\num in {0/0/1, 0.6/2/2, 2.2/0.6/3, 1.7/-1.5/4, -0.5/-1.8/5, -2.1/-0.8/6, -1.5/1.5/7}{
        \draw[fill=black] (\x,\y) circle (1pt); 
        \node at (\x,\y) [below left] {\num};   
    }
    \draw[thick] (0.6,2) -- (2.2,0.6);
    \draw[thick] (2.2,0.6) -- (1.7,-1.5);
    \draw[thick] (1.7,-1.5) -- (-0.5,-1.8);
    \draw[thick] (-0.5,-1.8) -- (-2.1,-0.8);
    \draw[thick] (-2.1,-0.8) -- (-1.5,1.5);
    \draw[thick] (-1.5,1.5) -- (0.6,2);
    \draw[dashed] (0.3,1) -- (0.933333333,0.866666667);
    \draw[dashed] (0.933333333,0.866666667) -- (1.1,0.3);
    \draw[dashed] (1.1,0.3) -- (1.3,-0.3);
    \draw[dashed] (1.3,-0.3) -- (0.85,-0.75);
    \draw[dashed] (0.85,-0.75) -- (0.4,-1.1);
    \draw[dashed] (0.4,-1.1) -- (-0.25,-0.9);
    \draw[dashed] (-0.25,-0.9) -- (-0.866666667,-0.866666667);
    \draw[dashed] (-0.866666667,-0.866666667) -- (-1.05,-0.4);
    \draw[dashed] (-1.05,-0.4) -- (-1.2,0.233333333);
    \draw[dashed] (-1.2,0.233333333) -- (-0.75,0.75);
    \draw[dashed] (-0.75,0.75) -- (-0.3,1.166666667);
    \draw[dashed] (-0.3,1.166666667) -- (0.3,1);
    \draw[thick, path fading=fade down] (0.6,2) -- (-0.1,2.7);
    \draw[thick, path fading=fade down] (0.6,2) -- (0.9, 2.8);
    \draw[thick, path fading=fade down] (0.6,2) -- (1.5,2.2);
    \draw[thick, path fading=fade right] (2.2,0.6) -- (2.6,1.5);
    \draw[thick, path fading=fade right] (2.2,0.6) -- (3.3,-0.2);
    \draw[thick, path fading=fade right] (1.7,-1.5) -- (3.0,-1.0);
    \draw[thick, path fading=fade right] (1.7,-1.5) -- (2.9,-2.5);
    \draw[thick, path fading=fade up] (1.7,-1.5) -- (1.3,-2.6);
    \draw[thick, path fading=fade up] (-0.5,-1.8) -- (0.2,-2.7);
    \draw[thick, path fading=fade up] (-0.5,-1.8) -- (-1.8,-2.5);
    \draw[thick, path fading=fade up] (-2.1,-0.8) -- (-2.2,-2.0);
    \draw[thick, path fading=fade up] (-2.1,-0.8) -- (-3.2,-1.0);
    \draw[thick, path fading=fade down] (-2.1,-0.8) -- (-3.3,0.5);
    \draw[thick, path fading=fade left] (-1.5,1.5) -- (-2.8,1.0);
    \draw[thick, path fading=fade left] (-1.5,1.5) -- (-2.8,2.0);
    \draw[thick, path fading=fade down] (-1.5,1.5) -- (-1.0,2.4);            
  \end{tikzpicture}  
  \caption{Unfractured porous media. Solid lines denote the primal mesh, dashed lines denote a box centered at the vertex $1$ in the dual mesh.}\label{fig:mesh_a}
 \end{subfigure}
 \begin{subfigure}[b]{0.6\textwidth}
  \begin{tikzpicture}[scale=1.0]
    \draw[thick] (0,0) -- (0.6,2);
    \draw[thick] (0,0) -- (2.2,0.6);
    \draw[ultra thick, blue] (0,0) -- (1.7,-1.5);
    \draw[thick] (0,0) -- (-0.5,-1.8);
    \draw[thick] (0,0) -- (-2.1,-0.8);
    \draw[ultra thick, blue] (0,0) -- (-1.5,1.5);
    \foreach \x/\y/\num in {0/0/1, 0.6/2/2, 2.2/0.6/3, 1.7/-1.5/4, -0.5/-1.8/5, -2.1/-0.8/6, -1.5/1.5/7}{
        \draw[fill=black] (\x,\y) circle (1pt); 
        \node at (\x,\y) [below left] {\num};   
    }
    \draw[thick] (0.6,2) -- (2.2,0.6);
    \draw[thick] (2.2,0.6) -- (1.7,-1.5);
    \draw[thick] (1.7,-1.5) -- (-0.5,-1.8);
    \draw[thick] (-0.5,-1.8) -- (-2.1,-0.8);
    \draw[thick] (-2.1,-0.8) -- (-1.5,1.5);
    \draw[thick] (-1.5,1.5) -- (0.6,2);
    \draw[dashed] (0.3,1) -- (0.933333333,0.866666667);
    \draw[dashed] (0.933333333,0.866666667) -- (1.1,0.3);
    \draw[dashed] (1.1,0.3) -- (1.3,-0.3);
    \draw[dashed] (1.3,-0.3) -- (0.85,-0.75);
    \draw[dashed] (0.85,-0.75) -- (0.4,-1.1);
    \draw[dashed] (0.4,-1.1) -- (-0.25,-0.9);
    \draw[dashed] (-0.25,-0.9) -- (-0.866666667,-0.866666667);
    \draw[dashed] (-0.866666667,-0.866666667) -- (-1.05,-0.4);
    \draw[dashed] (-1.05,-0.4) -- (-1.2,0.233333333);
    \draw[dashed] (-1.2,0.233333333) -- (-0.75,0.75);
    \draw[dashed] (-0.75,0.75) -- (-0.3,1.166666667);
    \draw[dashed] (-0.3,1.166666667) -- (0.3,1);
    \draw[thick, path fading=fade down] (0.6,2) -- (-0.1,2.7);
    \draw[thick, path fading=fade down] (0.6,2) -- (0.9, 2.8);
    \draw[thick, path fading=fade down] (0.6,2) -- (1.5,2.2);
    \draw[thick, path fading=fade right] (2.2,0.6) -- (2.6,1.5);
    \draw[thick, path fading=fade right] (2.2,0.6) -- (3.3,-0.2);
    \draw[thick, path fading=fade right] (1.7,-1.5) -- (3.0,-1.0);
    \draw[ultra thick, blue, path fading=fade right] (1.7,-1.5) -- (2.9,-2.5);
    \draw[thick, path fading=fade up] (1.7,-1.5) -- (1.3,-2.6);
    \draw[thick, path fading=fade up] (-0.5,-1.8) -- (0.2,-2.7);
    \draw[thick, path fading=fade up] (-0.5,-1.8) -- (-1.8,-2.5);
    \draw[thick, path fading=fade up] (-2.1,-0.8) -- (-2.2,-2.0);
    \draw[thick, path fading=fade up] (-2.1,-0.8) -- (-3.2,-1.0);
    \draw[thick, path fading=fade down] (-2.1,-0.8) -- (-3.3,0.5);
    \draw[thick, path fading=fade left] (-1.5,1.5) -- (-2.8,1.0);
    \draw[ultra thick, blue, path fading=fade left] (-1.5,1.5) -- (-2.8,2.0);
    \draw[thick, path fading=fade down] (-1.5,1.5) -- (-1.0,2.4);
    \node at (0.85,-0.75) [circle,fill,inner sep=1pt,label=right:$\mathbf{x}^F_1$] {};
    \node at (-0.75,0.75) [circle,fill,inner sep=1pt,label=above:$\mathbf{x}^F_2$] {};
  \end{tikzpicture}  
  \caption{Porous media with high-permeable fractures. Blue lines denote fractures, which intersect the box at $\mathbf{x}_{1}^F$ and $\mathbf{x}_{2}^{F}$.}\label{fig:mesh_b}
 \end{subfigure}
  \begin{subfigure}[b]{0.6\textwidth}
  \begin{tikzpicture}[scale=1.0]
    \foreach \x/\y/\num in {2.2/0.6/3, -0.5/-1.8/5, -2.1/-0.8/6, -1.5/1.5/7}{
        \draw[fill=black] (\x,\y) circle (1pt); 
        \node at (\x,\y) [below left] {\num};   
    }
    \node at (-0.07,0.07) [circle,fill,inner sep=1pt,label=above left:$1'$] {};
    \node at (0.07,0.07) [circle,fill,inner sep=1pt,label=above right:$1''$] {};
    \node at (-0.02,-0.02) [circle,fill,inner sep=1pt,label=below:$1^*$] {};
    \node at (0.53,2) [circle,fill,inner sep=1pt,label=left:$2'$] {};
    \node at (0.67,2) [circle,fill,inner sep=1pt,label=right:$2''$] {};
    \node at (1.67,-1.43) [circle,fill,inner sep=1pt,label=above right:$4'$] {};
    \node at (1.58,-1.52) [circle,fill,inner sep=1pt,label=below left:$4''$] {};
    \draw[thick,red] (-0.006666667,0.04) -- (0.6,2); 
    \draw[thick,red] (-0.006666667,0.04) -- (1.625,-1.475); 
    \draw[thick,red] (-0.006666667,0.04) -- (-2.1,-0.8); 
    \draw[thick, red, path fading=fade down] (0.6,2) -- (0.9, 2.8); 
    \draw[thick, red, path fading=fade right] (1.625,-1.475) -- (2.9,-2.55); 
    \draw[thick] (-0.07,0.07) -- (0.53,2); 
    \draw[thick] (0.07,0.07) -- (0.67,2); 
    \draw[thick] (0.07,0.07) -- (2.2,0.6); 
    \draw[thick] (0.07,0.07) -- (1.67,-1.43);
    \draw[thick] (-0.02,-0.02) -- (1.58,-1.52); 
    \draw[thick] (-0.02,-0.02) -- (-0.5,-1.8); 
    \draw[thick] (-0.07,0.07) -- (-2.1,-0.8); 
    \draw[thick] (-0.02,-0.02) -- (-2.1,-0.8); 
    \draw[thick] (-0.07,0.07) -- (-1.5,1.5); 
    \draw[thick] (0.67,2) -- (2.2,0.6); 
    \draw[thick] (2.2,0.6) -- (1.65,-1.45); 
    \draw[thick] (1.58,-1.52) -- (-0.5,-1.8); 
    \draw[thick] (-0.5,-1.8) -- (-2.1,-0.8); 
    \draw[thick] (-2.1,-0.8) -- (-1.5,1.5); 
    \draw[thick] (-1.5,1.5) -- (0.55,2);
    \draw[dashed] (0.3,1) -- (0.933333333,0.866666667);
    \draw[dashed] (0.933333333,0.866666667) -- (1.1,0.3);
    \draw[dashed] (1.1,0.3) -- (1.3,-0.3);
    \draw[dashed] (1.3,-0.3) -- (0.85,-0.75);
    \draw[dashed] (0.85,-0.75) -- (0.4,-1.1);
    \draw[dashed] (0.4,-1.1) -- (-0.25,-0.9);
    \draw[dashed] (-0.25,-0.9) -- (-0.866666667,-0.866666667);
    \draw[dashed] (-0.866666667,-0.866666667) -- (-1.05,-0.4);
    \draw[dashed] (-1.05,-0.4) -- (-1.2,0.233333333);
    \draw[dashed] (-1.2,0.233333333) -- (-0.75,0.75);
    \draw[dashed] (-0.75,0.75) -- (-0.3,1.166666667);
    \draw[dashed] (-0.3,1.166666667) -- (0.3,1);
    \draw[thick, path fading=fade down] (0.53,2) -- (-0.1,2.7); 
    \draw[thick, path fading=fade down] (0.53,2) -- (0.83, 2.8); 
    \draw[thick, path fading=fade down] (0.67,2) -- (0.97, 2.8); 
    \draw[thick, path fading=fade down] (0.67,2) -- (1.5,2.2); 
    \draw[thick, path fading=fade right] (2.2,0.6) -- (2.6,1.5);
    \draw[thick, path fading=fade right] (2.2,0.6) -- (3.3,-0.2);
    \draw[thick, path fading=fade right] (1.67,-1.43) -- (3.0,-1.0); 
    \draw[thick, path fading=fade right] (1.67,-1.43) -- (2.92,-2.5); 
    \draw[thick, path fading=fade right] (1.58,-1.52) -- (2.88,-2.6); 
    \draw[thick, path fading=fade up] (1.58,-1.52) -- (1.3,-2.6);
    \draw[thick, path fading=fade up] (-0.5,-1.8) -- (0.2,-2.7);
    \draw[thick, path fading=fade up] (-0.5,-1.8) -- (-1.8,-2.5);
    \draw[thick, path fading=fade up] (-2.1,-0.8) -- (-2.2,-2.0);
    \draw[thick, path fading=fade up] (-2.1,-0.8) -- (-3.2,-1.0);
    \draw[thick, path fading=fade down] (-2.1,-0.8) -- (-3.3,0.5);
    \draw[thick, path fading=fade left] (-1.5,1.5) -- (-2.8,1.0);
    \draw[thick, path fading=fade left] (-1.5,1.5) -- (-2.8,2.0);
    \draw[thick, path fading=fade down] (-1.5,1.5) -- (-1.0,2.4);
  \end{tikzpicture}
  \caption{Porous media with low-permeable barriers. Red lines denote barriers, which subdivide the box into three sub-boxes. Degrees of freedom of vertices on barriers are duplicated accordingly to account for pressure discontinuity.}\label{fig:mesh_c}
 \end{subfigure}
 \caption{The meshes used in box methods for unfractured porous media, porous media with high-permeable fractures and low-permeable barriers.}
 \label{fig:mesh}
\end{figure}
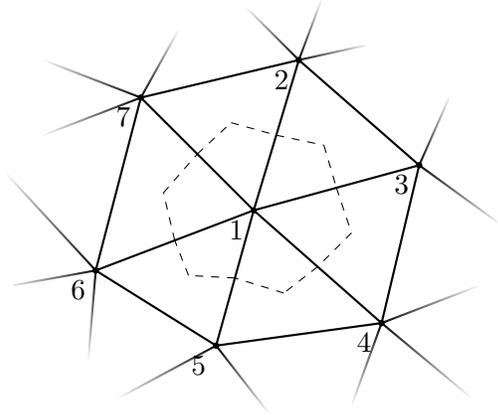
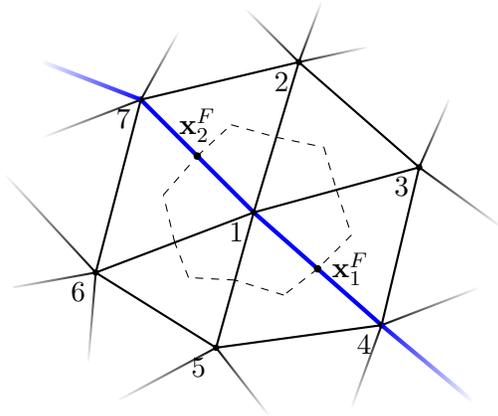
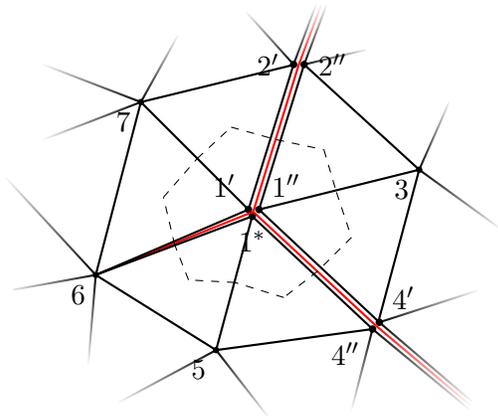

\subsection{The inclusion of low-permeable barriers}
Similar to the case with fractures, we assume that the barriers align with the edges of cells in the primal mesh.

We define the extended $P^1$-Lagrange finite element space on the fitting mesh $\{T:T\in\mathcal{T}\}$ as
\begin{equation}\label{eq:ExFEMSpace}
\overline{V}_h=\{v\in C(\Omega_h\setminus\Gamma): v|_{T}\in P^1(T), T\in\mathcal{T}\},
\end{equation}
where $\Gamma=\cup_{i}\gamma_i$ is the union of all barriers and $C(\Omega_h\setminus\Gamma)$ is the space of continuous functions on $\Omega_h\setminus\Gamma$, namely the functions admit discontinuities only on $\Gamma$.
Compared with the finite element space $V_{h}$ defined in \eqref{eq:FEMSpace}, the extended space $\overline{V}_h$ in \eqref{eq:ExFEMSpace} is endowed with additional degrees of freedom at the vertices on barriers to accommodate the discontinuity of pressure

In the dual mesh, the boxes $B\in\mathcal{B}$ may be subdivided into separate sub-boxes by the barriers $\Gamma$, as illustrated in Figure \ref{fig:mesh_c}.
Consequently, we define the extended collection of boxes $\overline{\mathcal{B}}$ as encompassing both the original boxes $B\in\mathcal{B}$ that remain undivided by $\Gamma$, and the sub-boxes resulting from such subdivision.
As a result, the number of boxes in $\overline{\mathcal{B}}$ is the same as the degrees of freedom of $\overline{V}_h$. 
It is worth making a special note about the situation of the tips of $\Gamma$ to eliminate ambiguity. 
According to the definition of $\overline{V}_h$, the vertices at the tips of $\Gamma$ have only one degree of freedom. 
Correspondingly, the boxes $B\in\mathcal{B}$ that enclose these vertices are regarded as undivided.

The extended Box-DFM for the hybrid-dimensional model \eqref{eq:HybridDmodel1}, \eqref{eq:HybridDmodel3} is to find the numerical solution $p\in\overline{V}_h$, such that
\begin{equation}\label{eq:BoxB}
-\int_{\partial B\setminus\Gamma} \mathbf{K}_m\nabla p \cdot \mathbf{n}ds-\int_{\partial B\cap\Gamma}k_b\frac{p^+-p^-}{a}ds=\int_{B} q\,dxdy,\quad \forall B\in\overline{\mathcal{B}},
\end{equation}
where $p^{-}$ and $p^{+}$ are pressure on $\partial B\cap\Gamma$ from the inside and outside of $B$, respectively.
For those undivided boxes $B\in\mathcal{\overline{B}}\cap\mathcal{B}$, it's clear that $\partial B\setminus\Gamma=B$ and $\partial B\cap\Gamma=\emptyset$, thus the equation is identical to \eqref{eq:Box0}.

\begin{rem}\label{rm:1}
People have long noticed \cite{bank1987some} that the stiffness matrix of the box method \eqref{eq:Box0} is identical to the stiffness matrix of the linear finite element method; see a concise proof in \cite{hackbusch1989first}. 
Following the same approach therein, one can prove that the stiffness matrix of Box-DFM \eqref{eq:BoxF} is identical to that of FEM-DFM \cite{karimi2001numerical}.
Therefore, the stiffness matrices of the box method \eqref{eq:Box0} and Box-DFM \eqref{eq:BoxF} are symmetric positive-definite, as in the finite element method, which makes them suitable for many fast solvers.
Fortunately, the stiffness matrix of the extended Box-DFM \eqref{eq:BoxB} is still symmetric positive-definite.
A proof is attached in Appendix \ref{app:SPD}.

\end{rem}

Finally, we combine the schemes \eqref{eq:BoxF} and \eqref{eq:BoxB} to formulate our extended Box-DFM for fracture and barrier networks: Find $p\in\overline{V}_h$, such that
\begin{equation}\label{eq:BoxFB}
-\int_{\partial B\setminus\Gamma}\mathbf{K}_m\nabla p \cdot \mathbf{n}ds
-\sum_{j=1}^{k} a k_{f} \frac{\partial p}{\partial \nu_j}(\mathbf{x}^F_j)
-\int_{\partial B\cap\Gamma}k_b\frac{p^+-p^-}{a}ds=\int_{B} q\,dxdy,\quad \forall B\in\overline{\mathcal{B}},
\end{equation}
where $\mathbf{x}^F_{j}, j=1,2,\ldots,k$ are points where the fractures intersect $\partial B$.

\begin{rem}\label{rm:2}
If the fracture and barrier intersect at a vertex, the treatment there depends on whether the fracture penetrates the barrier, or the barrier cuts the fracture. In the former case, there is only one degree of freedom, one box, and one equation associated with this vertex. In the latter case, degrees of freedom are assigned to each sub-box cut by the barriers, and equations are established for each sub-box.
{\color{black}
For an illustration of the differences between these two approaches, {\color{black} we refer the reader to}
Example \ref{ex:complex} in the numerical section, where both treatments are implemented and their outcomes demonstrated.}
\end{rem}

\section{Numerical experiments}\label{Sect:tests}

In this section, we test the effectiveness and range of validity of our extended Box-DFM {\color{black} on a set of test cases including published benchmark problems},
see e.g.,  \cite{flemisch2018benchmarks, angot2009asymptotic, zhao2023discrete, glaser2022comparison}.
{\color{black} If not otherwise stated, the units used in the examples adhere to SI, i.e., $\mathbf{K}$ in $\si{m}^2$, $p$ in $\si{Pa}$, $a$ in $\si{m}$, etc.}
The results are compared with those obtained from another extension of Box-DFM developed in \cite{glaser2022comparison}, which established separate equations and assigned individual degrees of freedom for arbitrary types of fractures. 
To distinguish between the two methods in the presentation, we refer to our method as 'box-dfm' and the method developed in \cite{glaser2022comparison} as 'ebox-dfm', following the naming convention the authors already established in the literature.

The problems are presented in roughly increasing order of geometrical complexity. 
For simplicity, we set the source term $q=0$ in all examples except the convergence test.
Since the method is identical to the traditional Box-DFM in the absence of barriers, we focus only on 
benchmarks involving low-permeable barriers.
In all the tests, except for the last one, {\color{black}we assume equal normal and tangential permeabilities in the barriers}.

\begin{exmp}\label{ex:convergence}{Convergence test}

In this experiment, we test the convergence of the method \eqref{eq:BoxB} to the model problem \eqref{eq:HybridDmodel1}, \eqref{eq:HybridDmodel3}
by considering the following setting: $\mathbf{K}_m=\mathbf{I}$,
$\frac{k_b}{a}=1$, $\Omega^{-}=[0,\frac12]\times[0,1]$, $\Omega^{+}=[\frac12,1]\times[0,1]$, $\gamma:\{x=\frac12, 0\leq y\leq 1\}$, and 
\begin{equation}\label{eq:sol_source}
\begin{cases}
p^{-}=\sin(x)\sin(y),~ q^{-}=2\sin(x)\sin(y), &(x,y)\in\Omega^{-},\\
p^{+}=p^{-}+\cos(\frac12)\sin(y),~ q^{+}=q^{-}+\cos(\frac12)\sin(y), &(x,y)\in\Omega^{+},
\end{cases}
\end{equation}
with the corresponding Dirichlet boundary condition.

We test the algorithm on unstructured grids with different level of refinement.
The grid used for the lowest level of refinement, consisting of 256 triangles, is shown in Figure \ref{fig:grid_converge_test}.
The $L^2$-error in porous matrix and the orders of convergence are presented in Table \ref{tab:convergence}, where $p_h$ denotes the numerical solution and $p$ is the exact solution given in \eqref{eq:sol_source}.
From the table, we can clearly observe the {\color{black} expected} 
order of convergence of the box method.

\begin{figure}[!hbpt]
 \centering
 \includegraphics[width=0.3\textwidth]{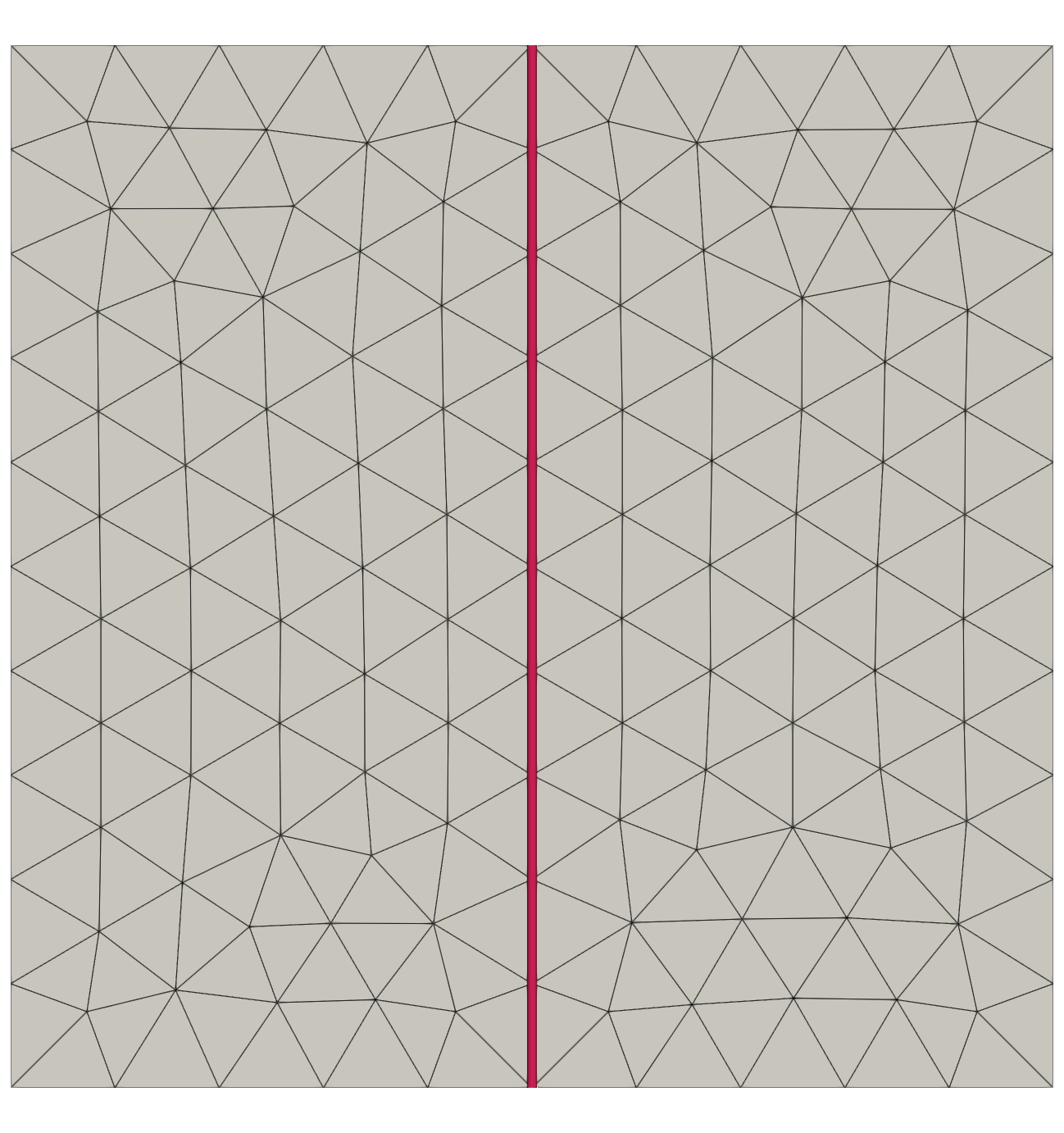}
  \setlength{\abovecaptionskip}{2pt} 
 \caption{\textbf{Example \ref{ex:convergence}: convergence test.} 
 The grid used for the lowest refinement, consisting of $256$ triangles. The red line represents the barrier interface.}
 \label{fig:grid_converge_test}
\end{figure}

\begin{table}[!hbpt]
\centering
\begin{tabular}{c c c}
\hline\hline
$i$ & $||p_h-p||_{2}$ & Order \\ \hline\hline

0 & 5.39e-04 & - \\ \hline
1 & 1.36e-04 & 1.99 \\ \hline
2 & 3.40e-05 & 2.00 \\ \hline
3 & 8.50e-06 & 2.00 \\ \hline
4 & 2.13e-06 & 2.00 \\ \hline
5 & 5.31e-07 & 2.00 \\ \hline
\end{tabular}
\caption{\textbf{Example \ref{ex:convergence}: convergence test.} 
{\color{black} $L^2$ errors evaluated on $\Omega^{\pm}$ on different levels of refinement}
}\label{tab:convergence}
\end{table}
\end{exmp}

\begin{exmp}\label{ex:single}
{Single barrier}

In this experiment, we examine the performance of our method in cases involving a single barrier. 
We consider two scenarios: the first involves a vertical barrier from (0.5, 0.5) to (0.5, 1), see the same test in references \cite{angot2009asymptotic, zhao2023discrete}; the second tests a slanted barrier from (0.25, 0.75) to (0.75, 0.25). 
In both scenarios, the computational domain is defined as $\Omega=[0,1]^2$, and the permeability tensor in the porous matrix is $\mathbf{K}_m=\mathbf{I}$. 
The ratio of permeability to aperture of barriers is given by $\frac{k_b}{a}=10^{-5}$. 
The top and bottom boundaries are impermeable ($g_N=0$), while the left and right boundaries are set with Dirichlet conditions, $g_D=0$ and $g_D=1$, respectively.

We conduct the computation using our extended box-dfm and the ebox-dfm developed in \cite{glaser2022comparison} on identical grids. 
The pressure contours obtained from our method, alongside the background mesh, are presented in Figure \ref{fig:single_contour} (a), (b).
The differences in pressure between the two methods are illustrated in Figure \ref{fig:single_contour} (c), (d). 
The comparison reveals that the discrepancies in the results of the two methods are tiny.
Additionally, we present a comparison of the pressure profiles obtained by the two methods along specific slices in Figure \ref{fig:single_plot}. 
This comparison also exhibits an excellent agreement between two methods.

\begin{figure}[!htbp]
 \centering
 \begin{subfigure}[b]{0.3\textwidth}
  \includegraphics[width=\textwidth]{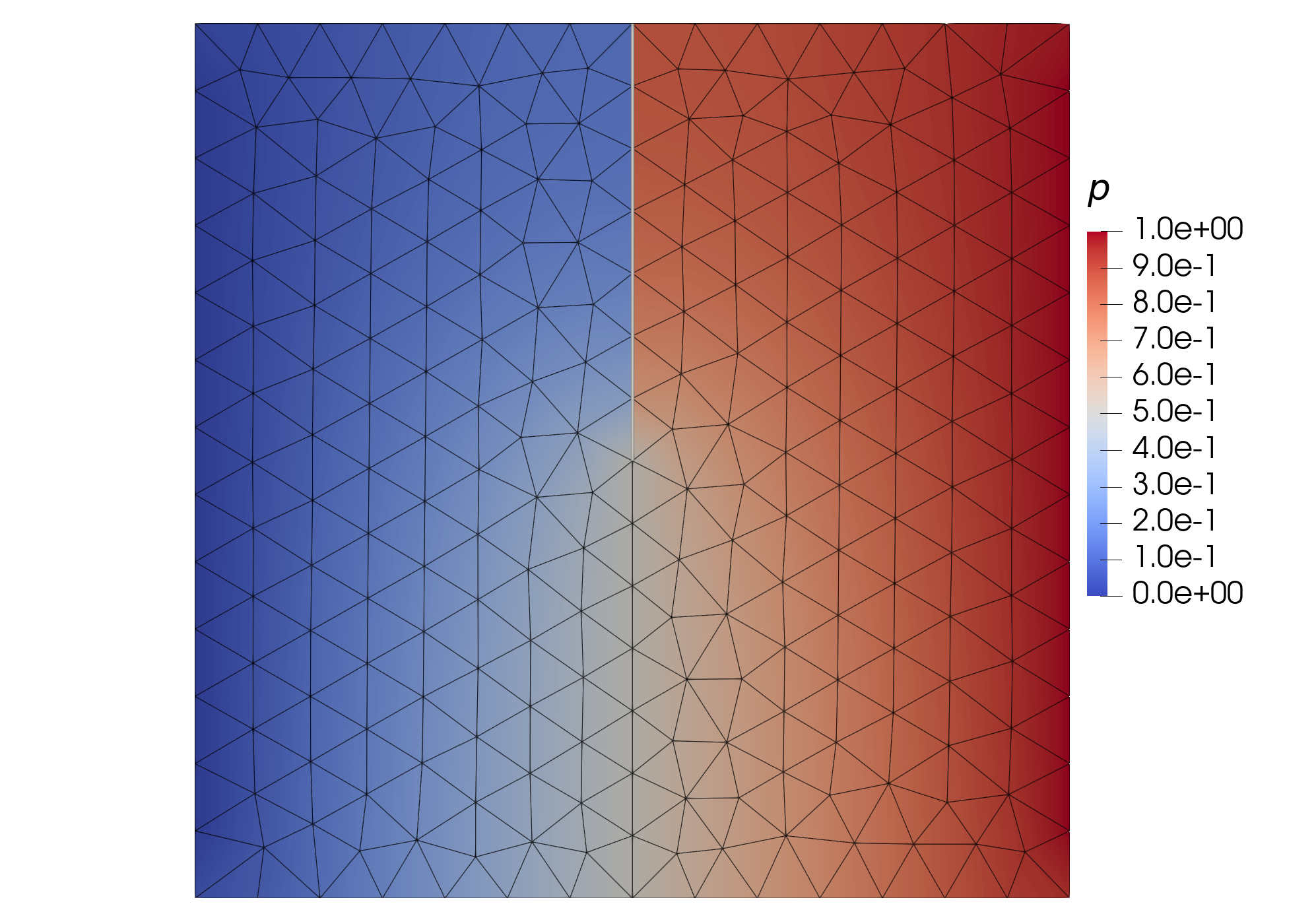}
  \caption{vertical barrier}
 \end{subfigure}
 \begin{subfigure}[b]{0.3\textwidth}
  \includegraphics[width=\textwidth]{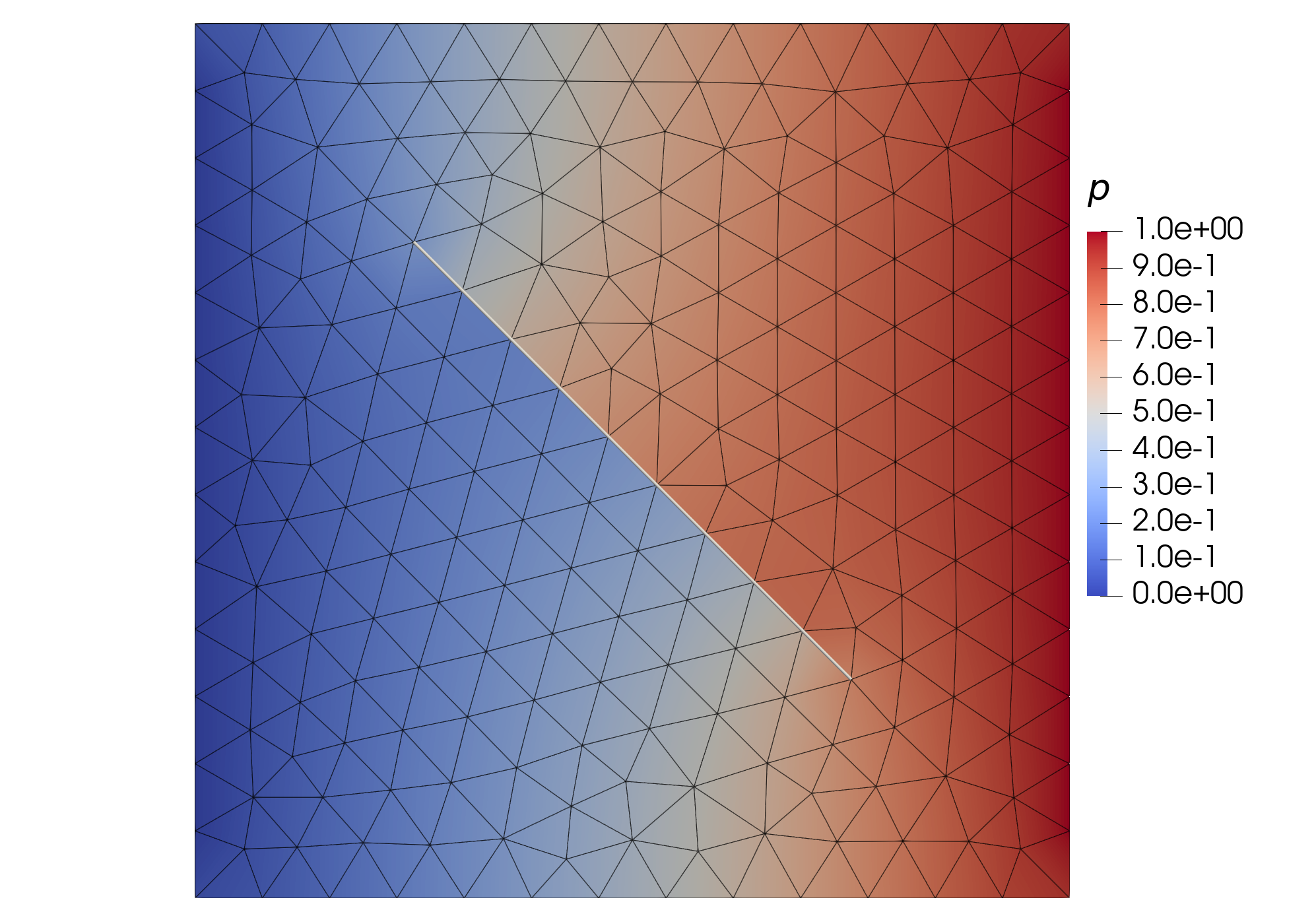}
  \caption{slanted barrier}
 \end{subfigure}

 \begin{subfigure}[b]{0.3\textwidth}
  \includegraphics[width=\textwidth]{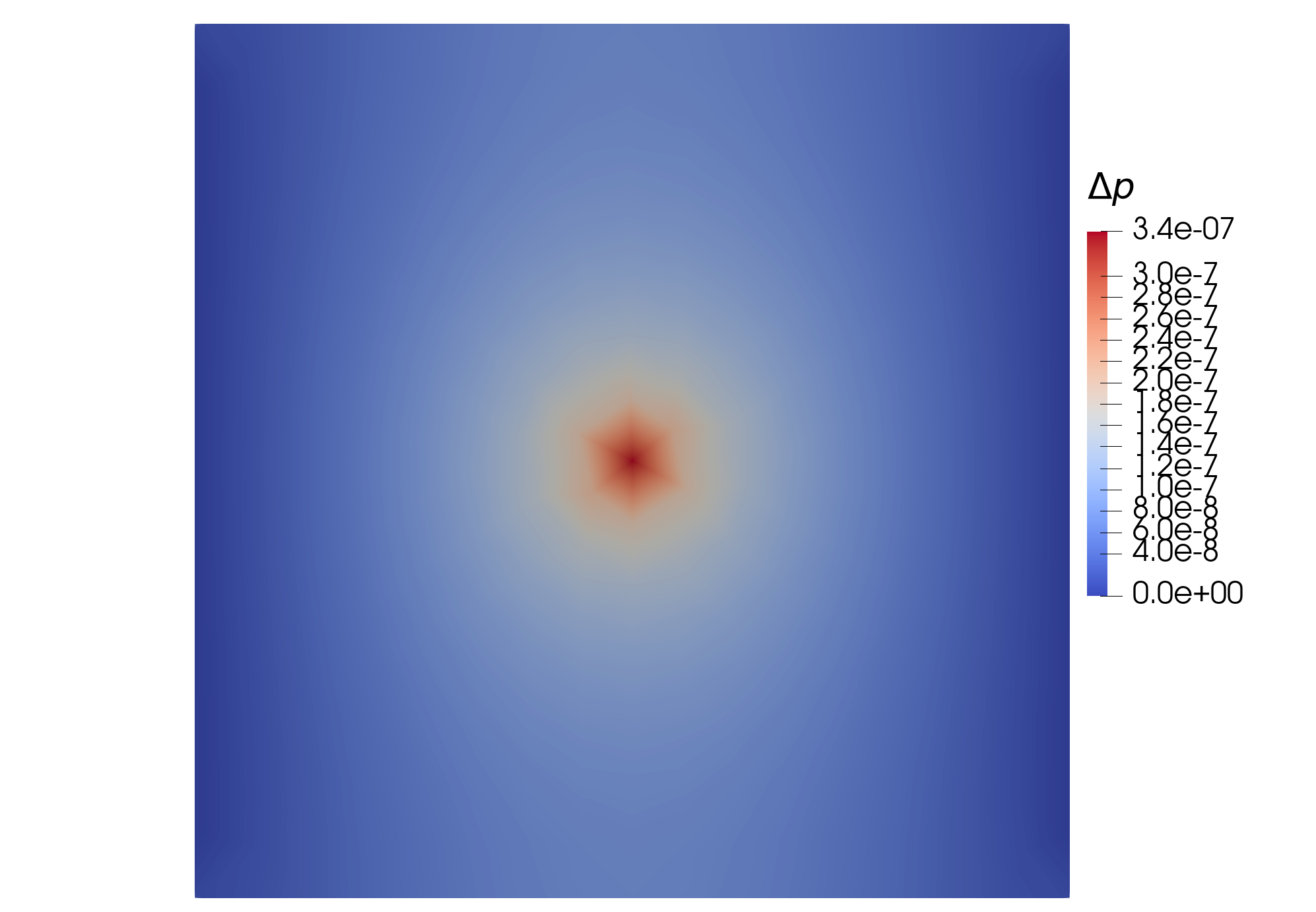}
  \caption{vertical barrier}
 \end{subfigure}
 \begin{subfigure}[b]{0.3\textwidth}
  \includegraphics[width=\textwidth]{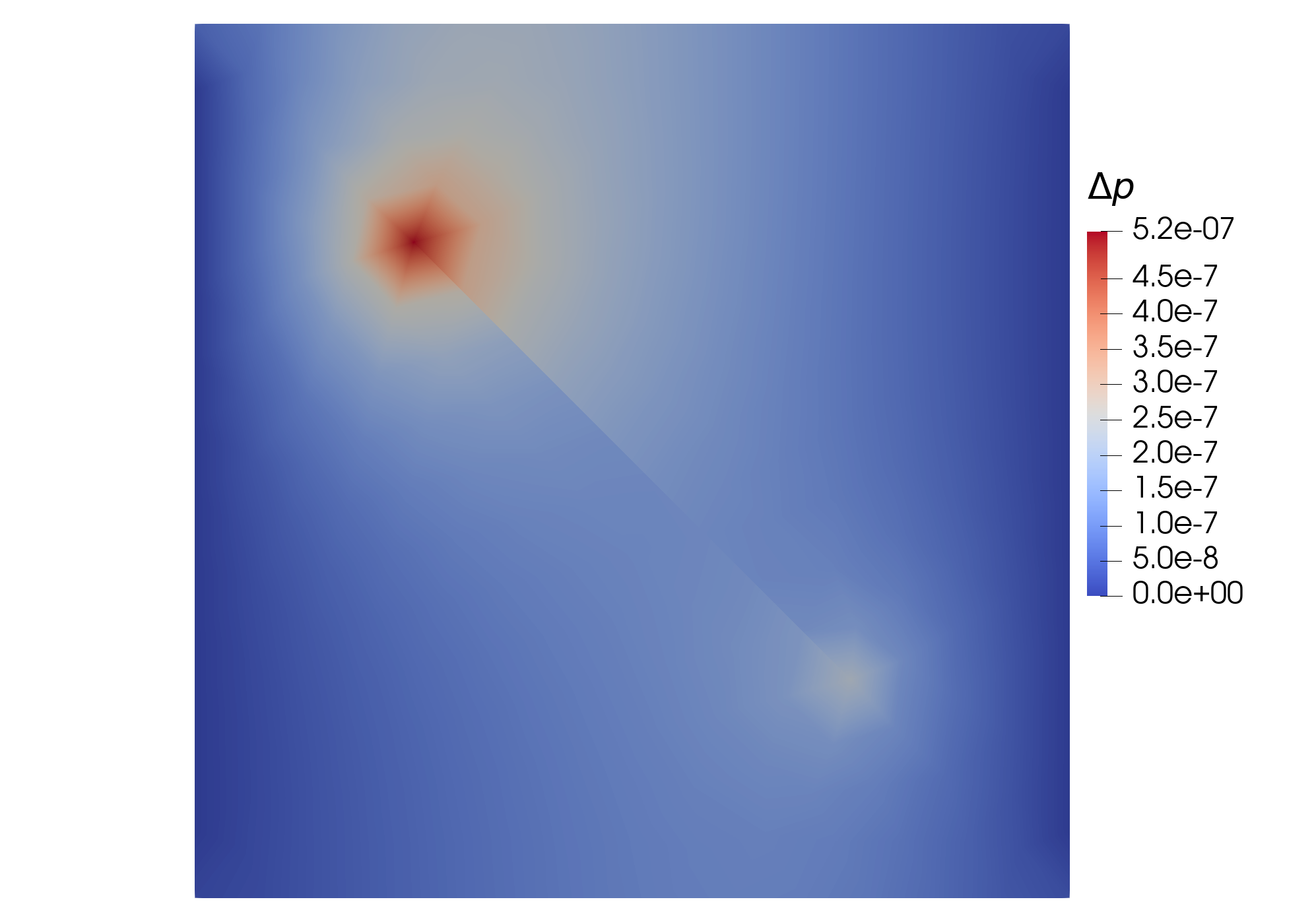}
  \caption{slanted barrier}
 \end{subfigure}
 \caption{\textbf{Example \ref{ex:single}: single barrier.}  Visualization of the solution $p$ (row $1$) obtained with the proposed method, and the difference $\Delta p$ (row $2$) to the solution obtained with the ebox-dfm in \cite{glaser2022comparison}. 
 In the first scenario (vertical barrier), the grid contains $253$ vertices and $450$ triangles; in the second scenario (slanted barrier), it contains $229$ vertices and $404$ triangles.}
 \label{fig:single_contour}
\end{figure}

\begin{figure}[!htbp]
 \centering
 \begin{subfigure}[b]{0.3\textwidth}
  \includegraphics[width=\textwidth]{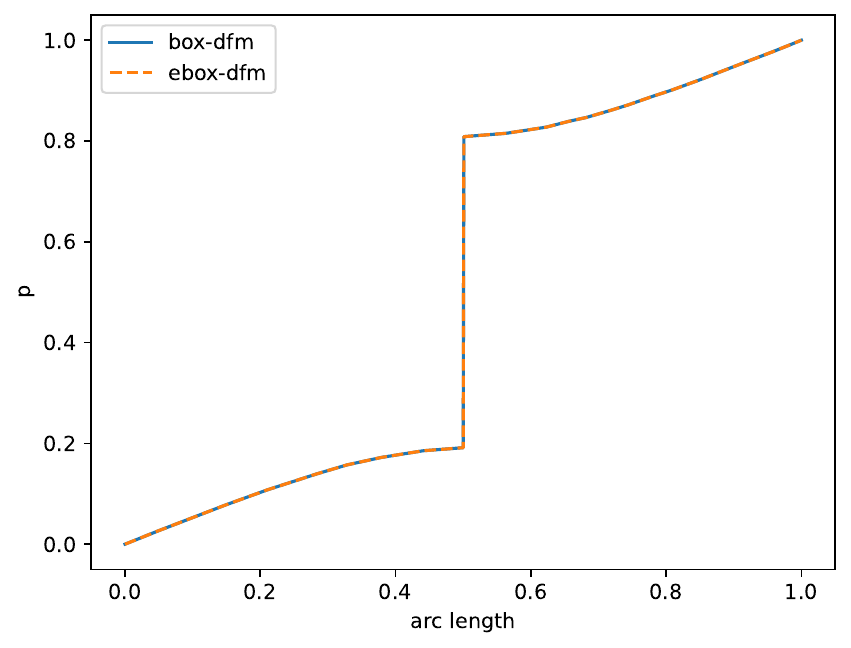}
  \caption{vertical barrier}
 \end{subfigure}
 \begin{subfigure}[b]{0.3\textwidth}
  \includegraphics[width=\textwidth]{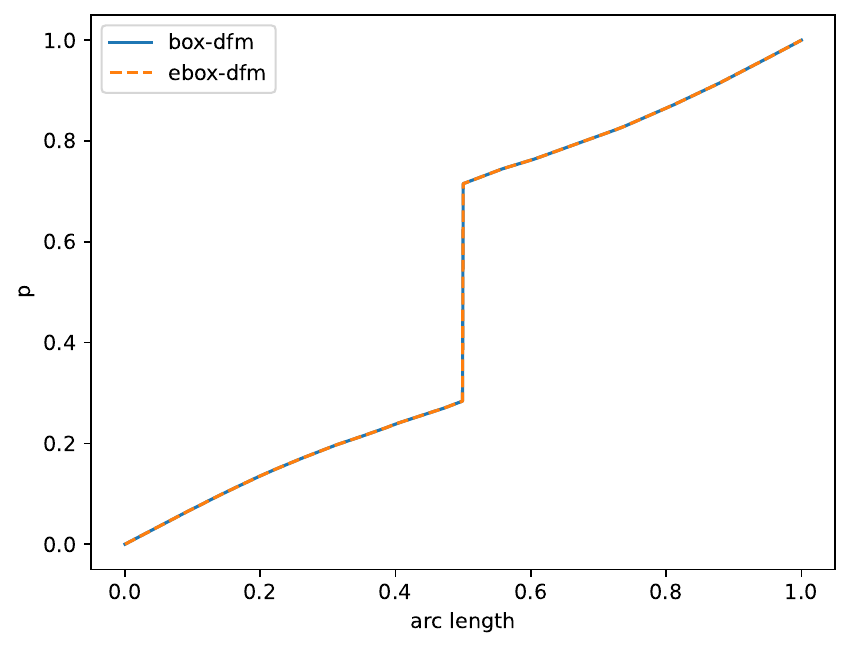}
  \caption{slanted barrier}
 \end{subfigure}
 \caption{\textbf{Example \ref{ex:single}: single barrier.} 
 Pressure profiles along the slice (0, 0.75) -- (1, 0.75) for the vertical barrier and along the slice (0, 0.5) -- (1, 0.5) for the slanted barrier.}
 \label{fig:single_plot}
\end{figure}
\end{exmp}

\begin{exmp}\label{ex:regular} 
{Regular barrier network}

In this experiment, we test the performance of our method on a regular barrier network.
The computational domain is defined as $\Omega=[0,1]^2$, and the permeability tensor of the porous matrix is given by $\mathbf{K}_m=\mathbf{I}$. 
The domain encompasses six barriers, with coordinates detailed in \cite{flemisch2018benchmarks}. 
Each barrier has a uniform aperture $a=10^{-4}$ and a permeability $k_b=10^{-4}$. 
The top and bottom boundaries are impermeable; the left boundary is set as a Neumann boundary with $g_N=1$; and the right boundary is defined by a Dirichlet condition with $g_D=1$.

We conduct computations using both our extended box-dfm and the ebox-dfm on three different grids, with a varying level of refinement. 
The pressure contours produced by our method, alongside the background mesh, and the differences in pressure between the two methods, are illustrated in Figure \ref{fig:regular_contour}. 
Furthermore, we present pressure slices along the line from (0, 0.1) to (0.9, 1) in Figure \ref{fig:regular_plot}, which are compared with the reference solution \cite{flemisch2018benchmarks} obtained using the mimetic finite difference (MFD) method with $1,136,456$ elements for the equi-dimensional model. 

Unlike the scenarios involving a single barrier, noticeable discrepancies in the solutions, especially on the coarse and medium refinement grids, between the two methods are observed. 
This is possibly due to their difference in handling the intersection of low-permeable barriers. 
In our method, tangential flows on barriers are completely neglected, thus no special 
condition at intersections is imposed.
{\color{black}
The neglected intersection volume in the ebox-dfm model may lead to an overestimation of the transfer fluxes across the barrier intersections in comparison to the equi-dimensional reference, in particular on coarse meshes. See the subsequent example for more investigations on the influence of differences in intersection handling.
}
From the comparison of slices in Figure \ref{fig:regular_plot}, our results show better agreement with the reference solution.

\begin{figure}[!htbp]
 \centering
 \begin{subfigure}[b]{0.3\textwidth}
  \includegraphics[width=\textwidth]{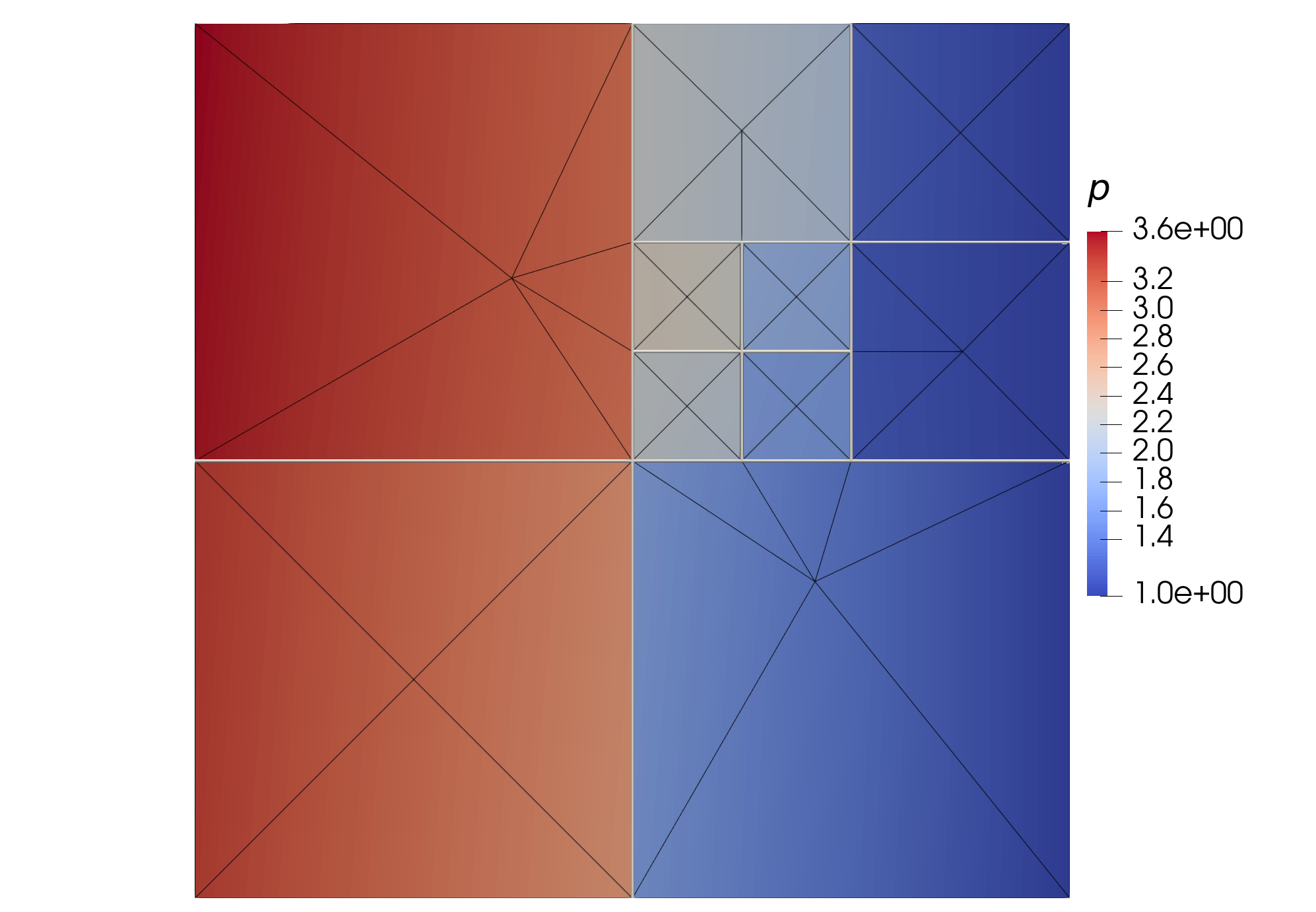}
  \caption{coarse grid}
 \end{subfigure}
 \begin{subfigure}[b]{0.3\textwidth}
  \includegraphics[width=\textwidth]{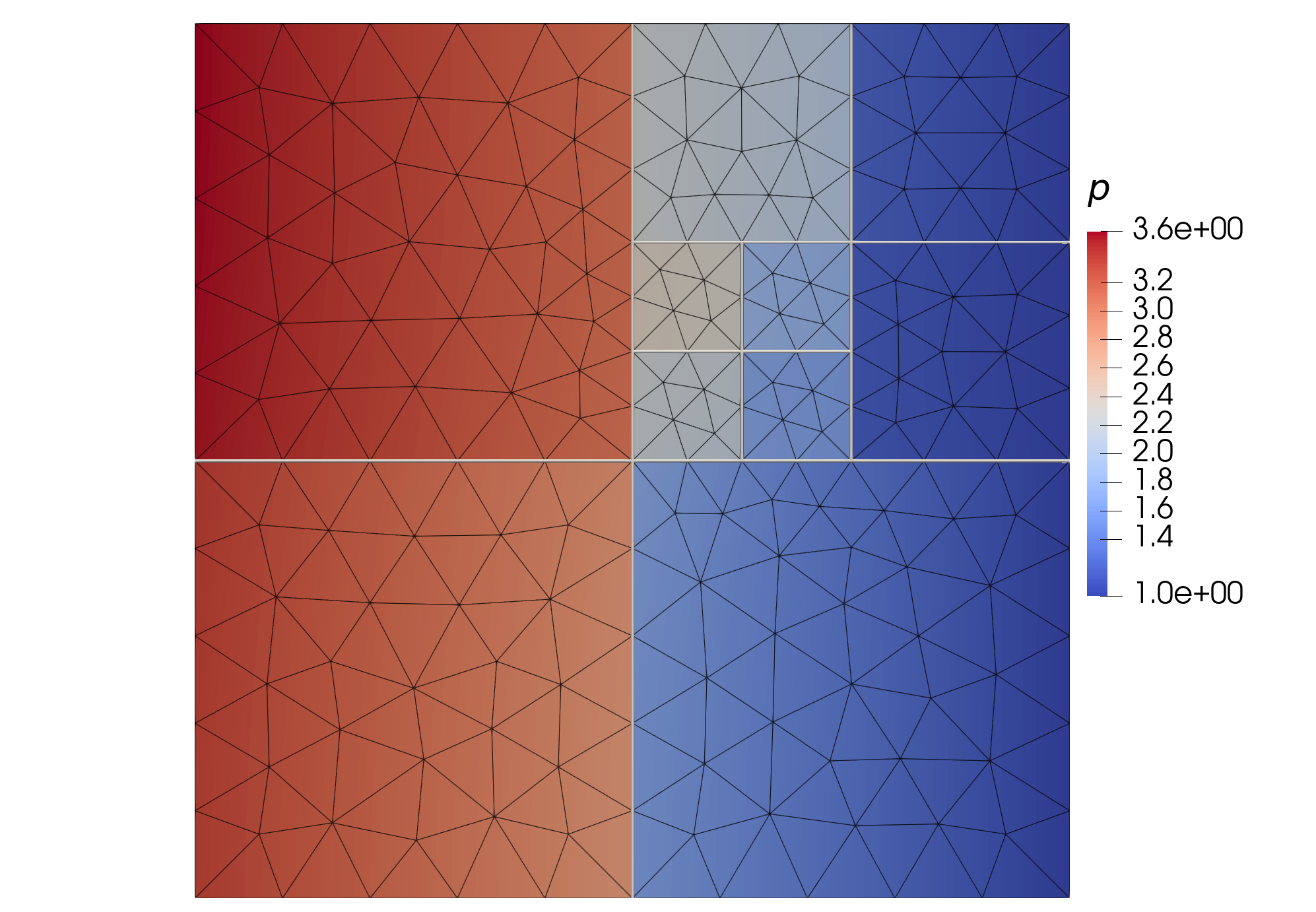}
  \caption{medium grid}
 \end{subfigure}
 \begin{subfigure}[b]{0.3\textwidth}
  \includegraphics[width=\textwidth]{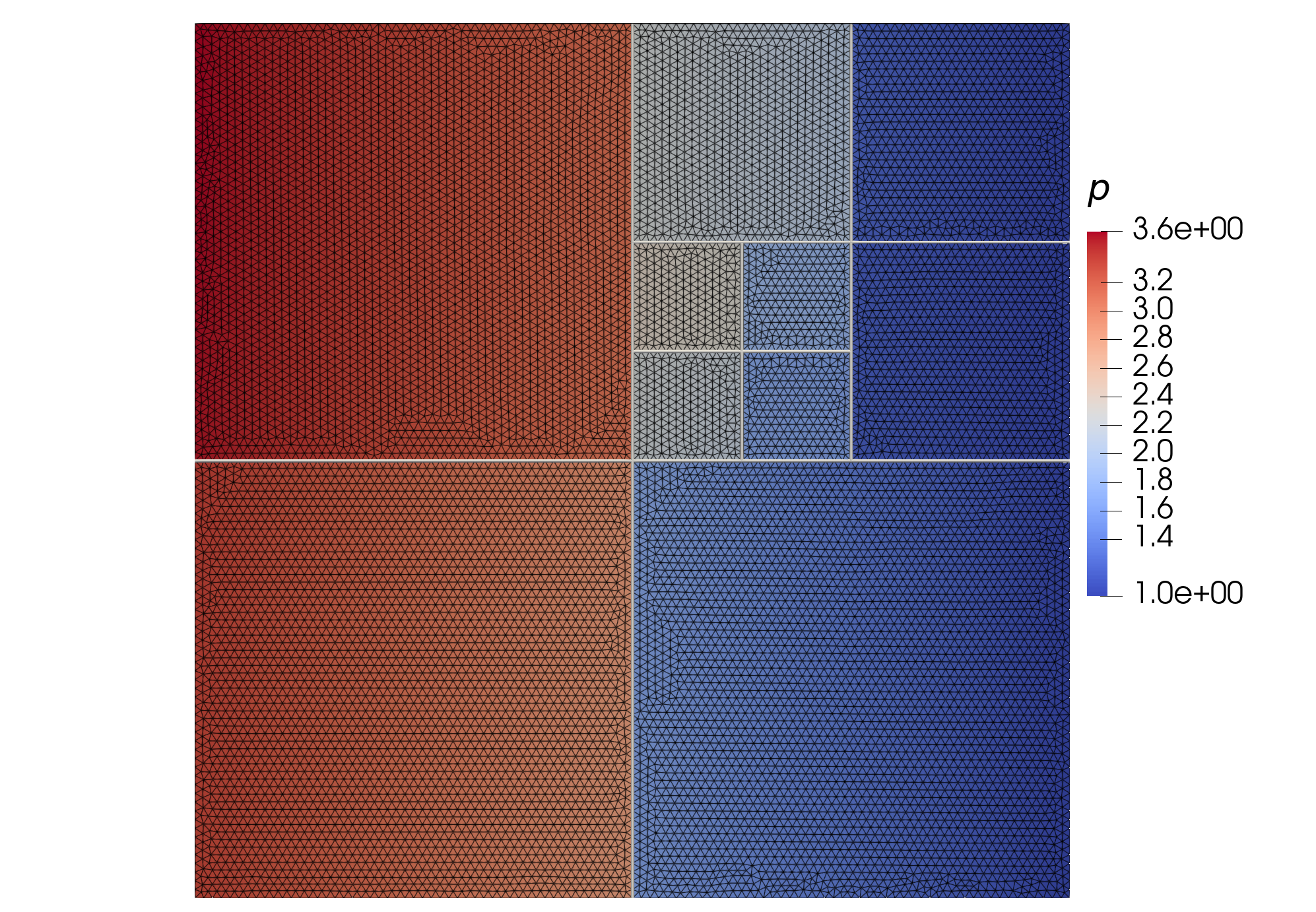}
  \caption{fine grid}
 \end{subfigure}

 \begin{subfigure}[b]{0.3\textwidth}
  \includegraphics[width=\textwidth]{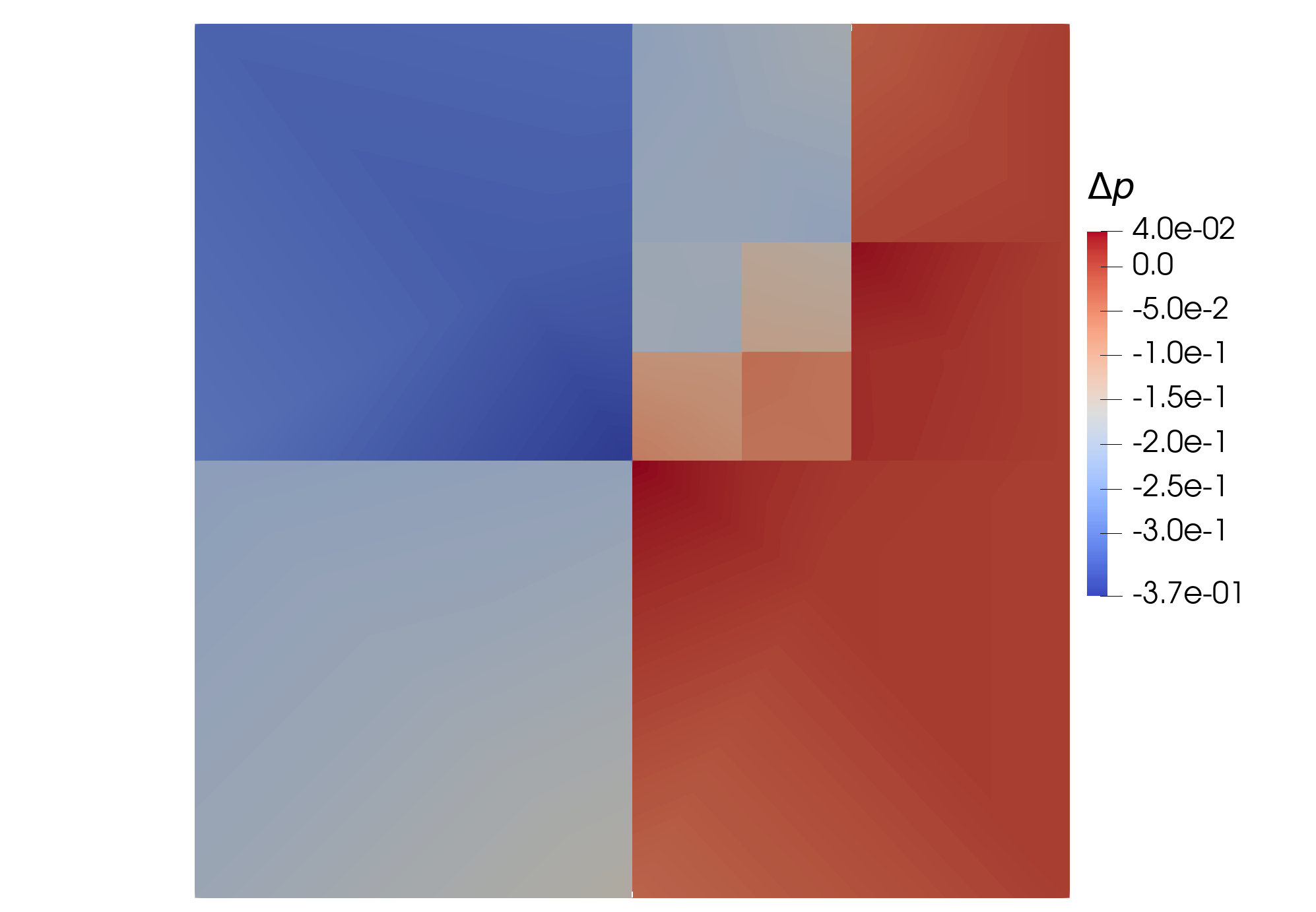}
  \caption{coarse grid}
 \end{subfigure}
 \begin{subfigure}[b]{0.3\textwidth}
  \includegraphics[width=\textwidth]{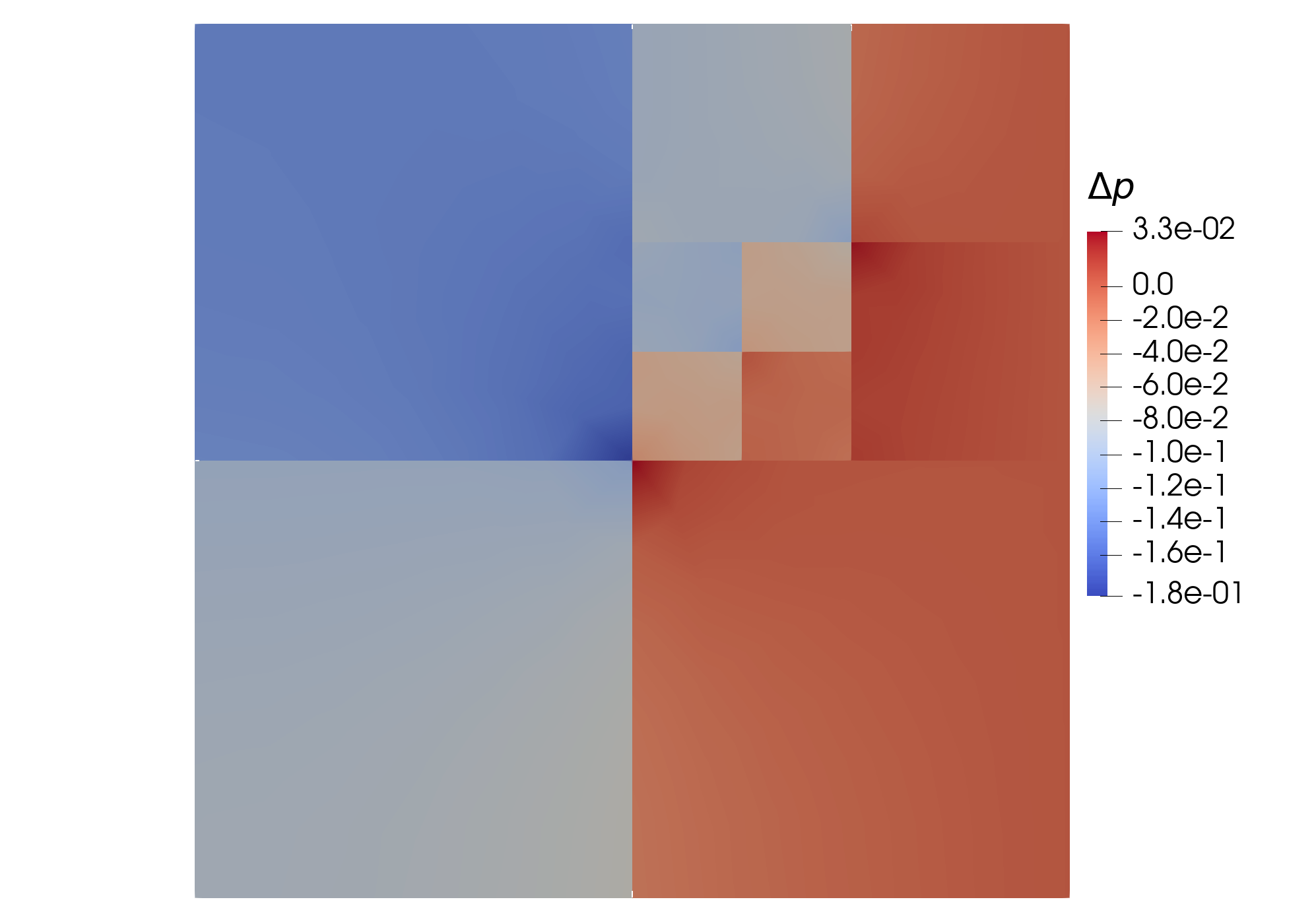}
  \caption{medium grid}
 \end{subfigure}
 \begin{subfigure}[b]{0.3\textwidth}
  \includegraphics[width=\textwidth]{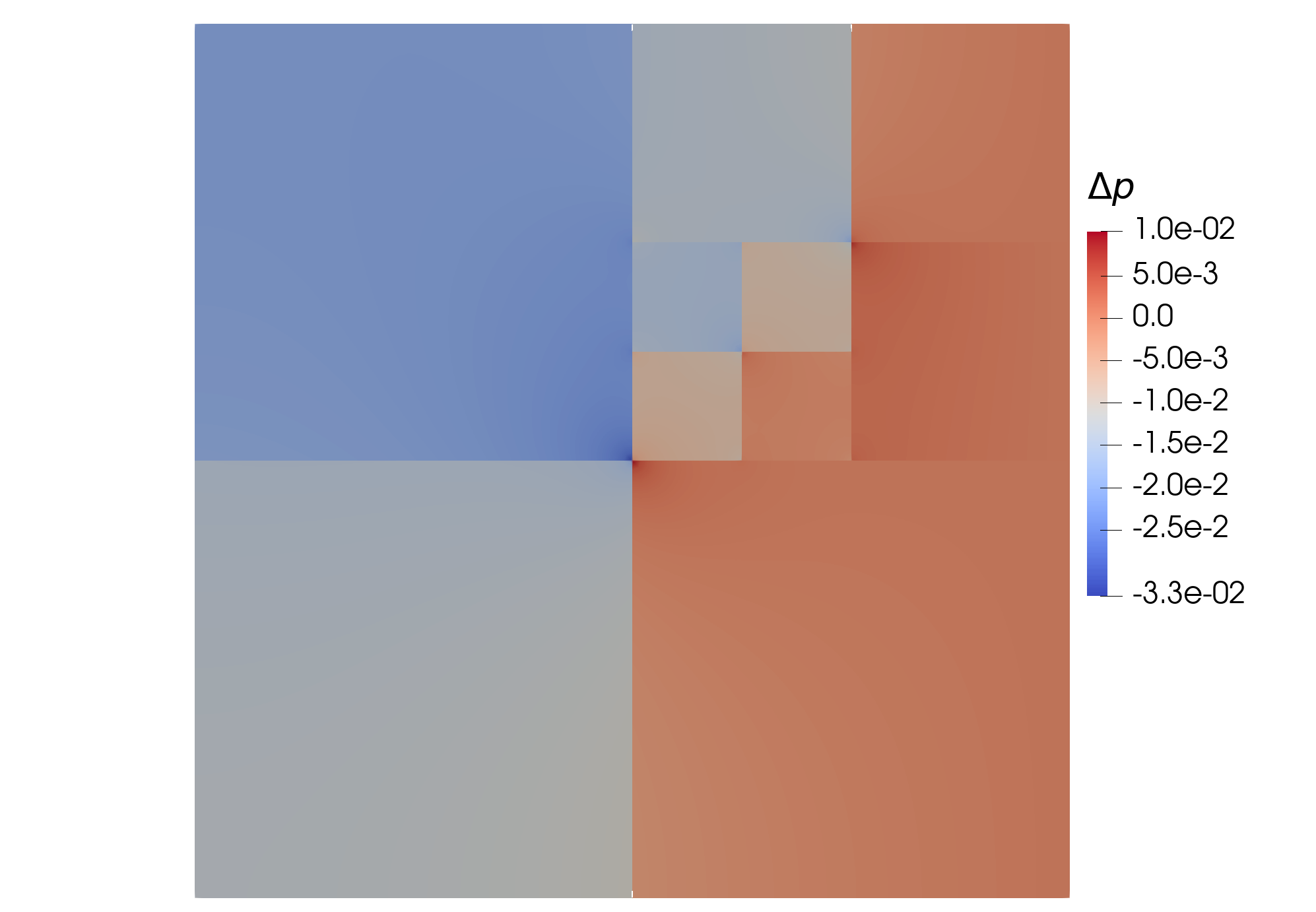}
  \caption{fine grid}
 \end{subfigure}
 \caption{\textbf{Example \ref{ex:regular}: regular barrier network.} 
 Visualization of the solution $p$ (row $1$) obtained with the proposed method, and the difference $\Delta p$ (row $2$) to the solution obtained with the ebox-dfm in \cite{glaser2022comparison}. 
 The coarse grid contains $29$ vertices and $46$ triangles; the medium grid contains $205$
 vertices and $366$ triangles; the fine grid contains $12, 112$ vertices and $23, 822$ triangles.}
 \label{fig:regular_contour}
\end{figure}

\begin{figure}[!htbp]
 \centering
 \begin{subfigure}[b]{0.3\textwidth}
  \includegraphics[width=\textwidth]{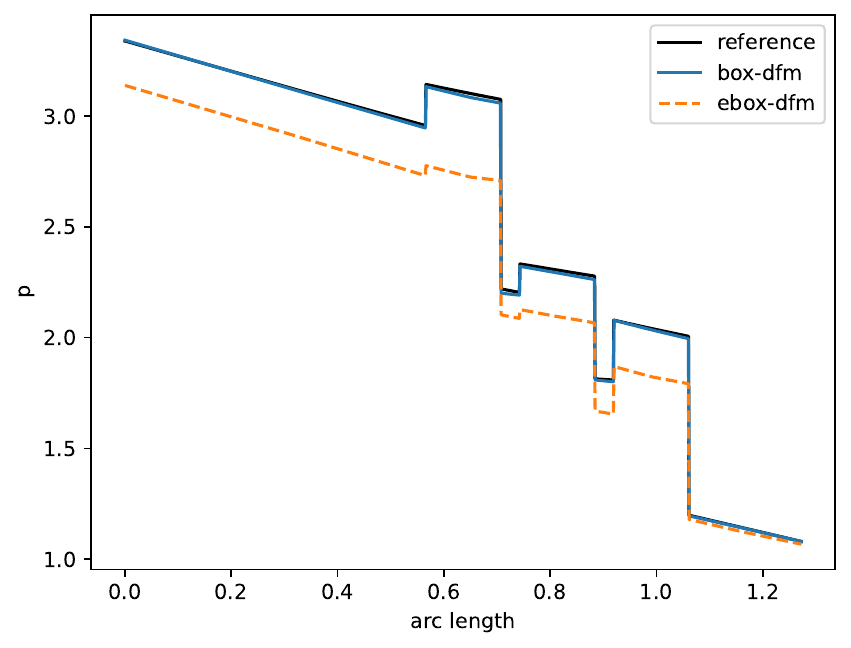}
  \caption{coarse grid}
 \end{subfigure}
 \begin{subfigure}[b]{0.3\textwidth}
  \includegraphics[width=\textwidth]{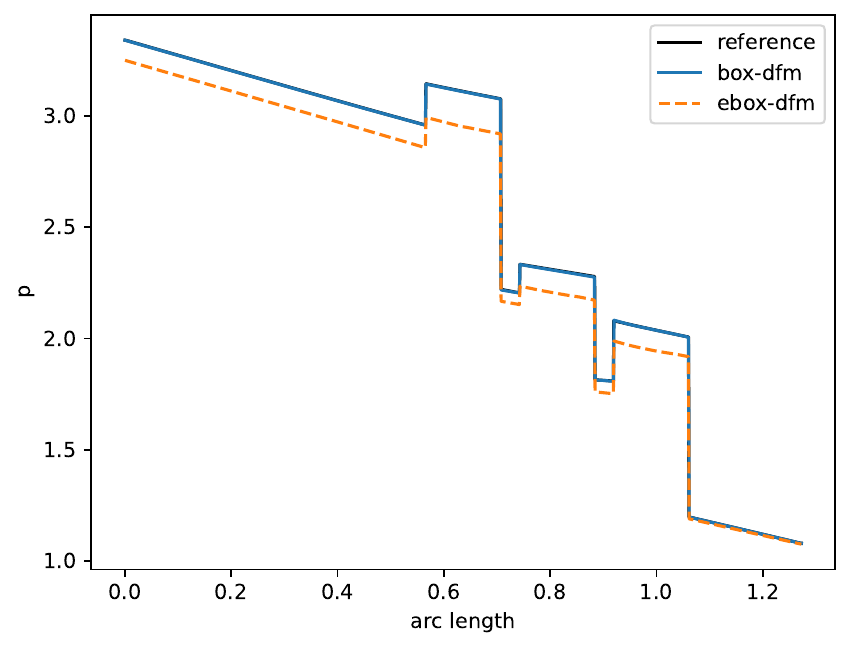}
  \caption{medium grid}
 \end{subfigure}
 \begin{subfigure}[b]{0.3\textwidth}
  \includegraphics[width=\textwidth]{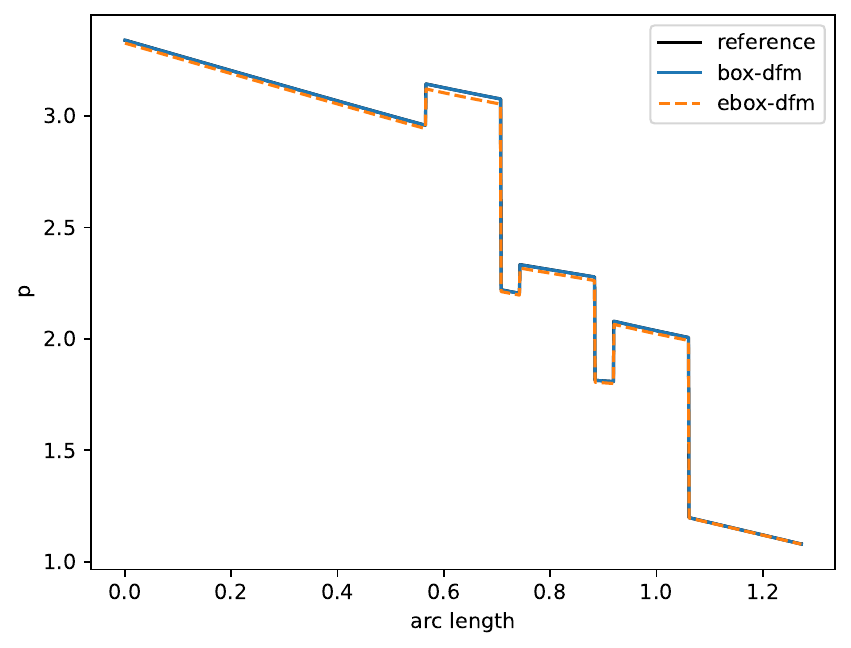}
  \caption{fine grid}
 \end{subfigure}
 \caption{\textbf{Example \ref{ex:regular}: regular barrier network.} 
 Pressure profiles along the slice (0, 0.1) -- (0.9, 1).
 The coarse grid contains $29$ vertices and $46$ triangles; the medium grid contains $205$ vertices and $366$ triangles; the fine grid contains $12, 112$ vertices and $23, 822$ triangles.
The reference, provided by the authors in \cite{flemisch2018benchmarks}, is obtained from the mimetic finite difference (MFD) method with $1,136,456$ elements for the equi-dimensional model.
}
 \label{fig:regular_plot}
\end{figure}

\end{exmp}

\begin{exmp}\label{ex:complex}
{Complex fracture-barrier network}

In this experiment, we test a complex fracture-barrier network as benchmark 4.3 in \cite{flemisch2018benchmarks}, where the high-permeable fractures and low-permeable barriers coexist and intersect.

The computational domain is $\Omega=[0,1]^2$, and the permeability tensor of the porous matrix is $\mathbf{K}_m=\mathbf{I}$. 
The domain contains 8 high-permeable fractures and 2 low-permeable barriers, with a uniform aperture $a=10^{-4}$.
All fractures have a uniform permeability $k_f=10^{4}$, and all barriers have a uniform permeability $k_b=10^{-4}$. 
For the exact coordinates of the fractures and barriers, we refer the readers to the Appendix C in \cite{flemisch2018benchmarks}.
Two sub-cases are considered.
In the sub-case (a), where the pressure gradient is predominantly vertical, the left and right boundaries are impermeable, i.e., $g_N=0$; and the top and bottom boundaries are Dirichlet with $g_D=4$ and $g_D=1$, respectively.
In the sub-case (b), where the pressure gradient is predominantly horizontal, the top and bottom boundaries are impermeable, i.e., $g_N=0$; and the left and right boundaries are Dirichlet with $g_D=4$ and $g_D=1$, respectively.

The treatment of intersections between high-permeability fractures and low-permeability barriers is a non-trivial issue. 
In \cite{flemisch2018benchmarks}, a harmonic average of the permeabilities of the fracture and barrier for the intersection cell is adopted in methods that explicitly account for the intersections.
{\color{black}In the ebox-dfm, pressure and flux continuity is enforced at intersections.}
In our proposed method, we prioritize the barrier at intersections, imposing discontinuous pressure at the intersection, {\color{black}but we will discuss an alternative approach in the sequel.}

We conduct the computation on a grid containing $1, 391$ vertices and $2, 680$ triangles using our extended box-dfm and the ebox-dfm for both cases. 
The pressure contours produced by our method, alongside the background mesh, and the differences in pressure between the extended box-dfm and the ebox-dfm, are presented in Figure \ref{fig:complex_contour}. 
Additionally, we present pressure slices along the line from (0, 0.5) to (1, 0.9) in Figure \ref{fig:complex_slice}, comparing them with the reference solution provided in \cite{flemisch2018benchmarks}.

{\color{black} In the reference solution, a harmonic average of the fracture and barrier permeabilities were assigned to the cells occupying the intersection region, thereby the intersection exhibits a more significant blocking feature.}
In reality, it is also possible for the intersection to show a more flow-preferable conducting feature. 
Therefore, we attempt an alternative approach to impose pressure continuity at the intersection and present the corresponding pressure slices in Figure \ref{fig:complex_slice_case_b}.
In these figures, we observe a larger deviation of our results from the reference solutions, but a smaller deviation from the results of the ebox-dfm is observed.
{\color{black}These findings are in line with the discussions in \cite{flemisch2018benchmarks} on the influence of not modelling the intersection region explicitly.}

\begin{figure}[!htbp]
 \centering
 \begin{subfigure}[b]{0.3\textwidth}
  \includegraphics[width=\textwidth]{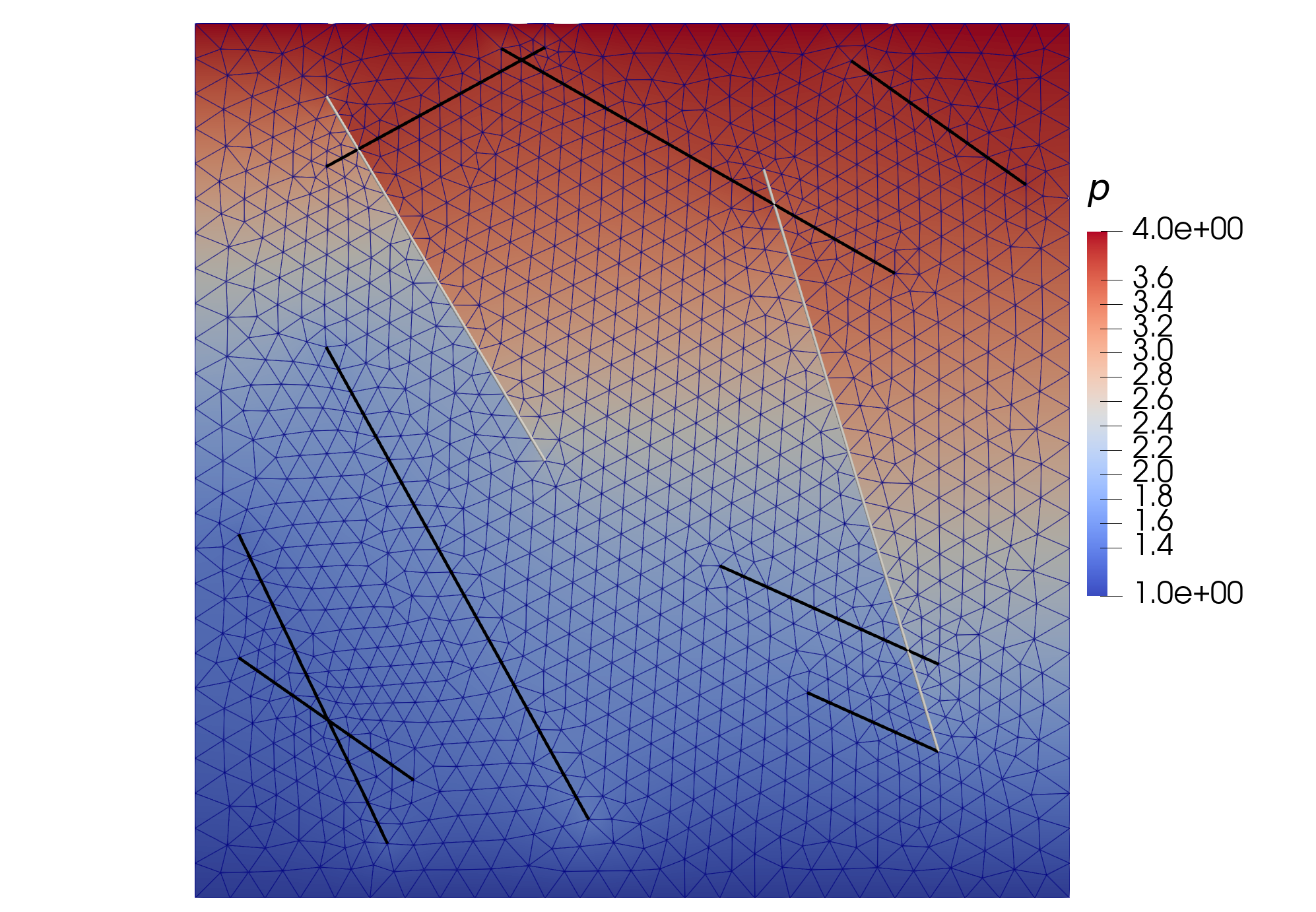}
  \caption{Flow from top to bottom}
 \end{subfigure}
 \begin{subfigure}[b]{0.3\textwidth}
  \includegraphics[width=\textwidth]{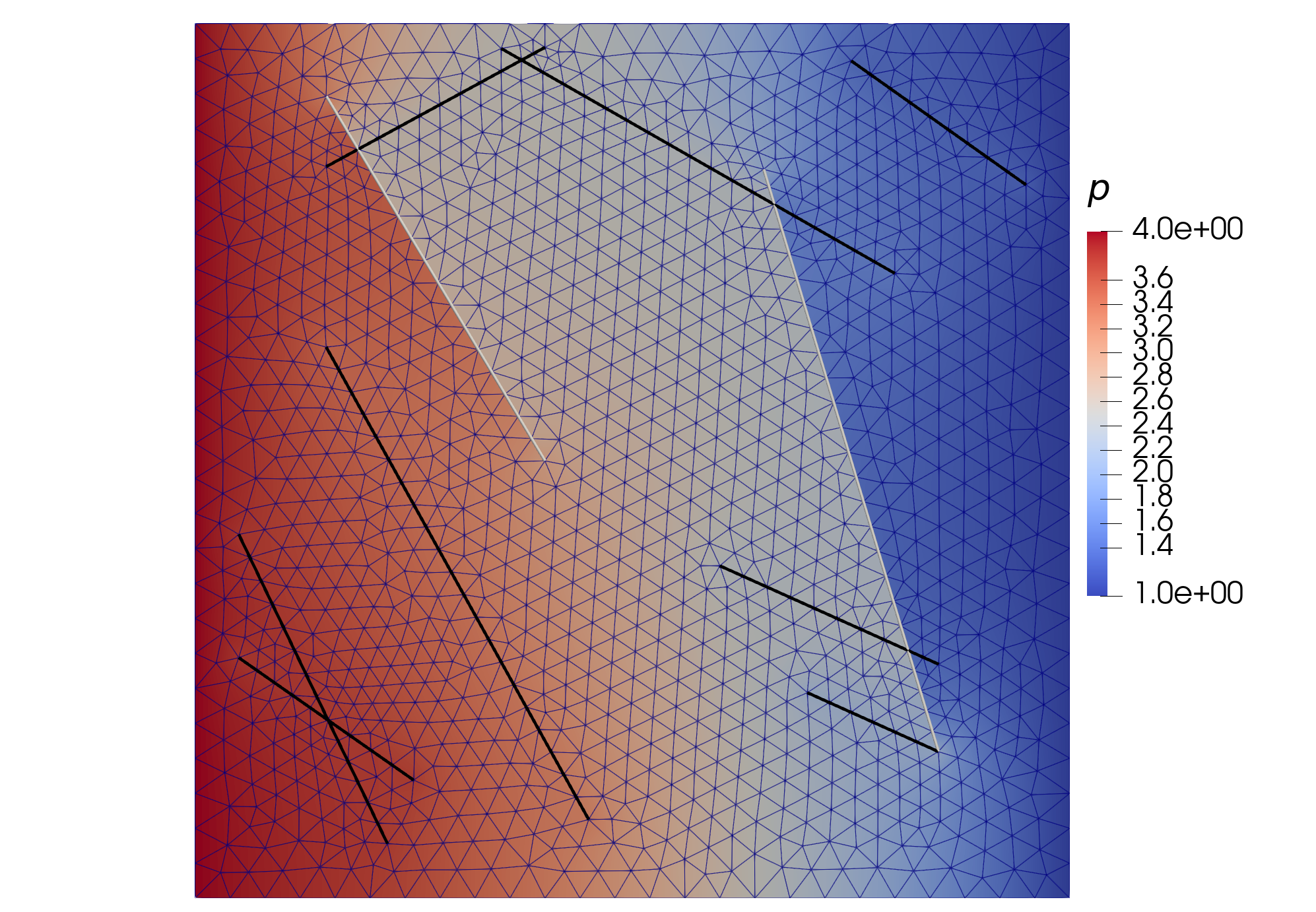}
  \caption{Flow from left to right}
 \end{subfigure}

 \begin{subfigure}[b]{0.3\textwidth}
  \includegraphics[width=\textwidth]{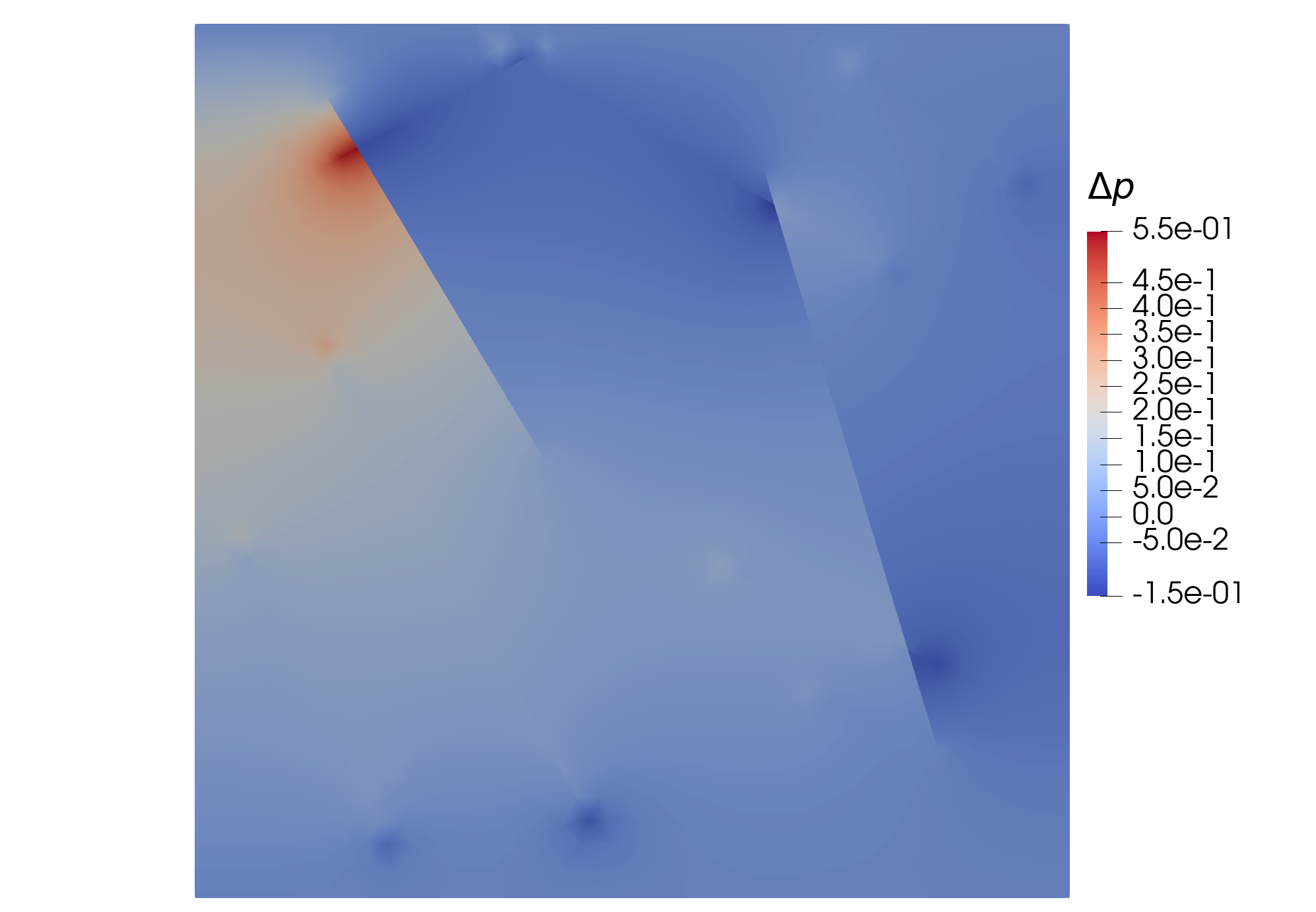}
  \caption{Flow from top to bottom}
 \end{subfigure}
 \begin{subfigure}[b]{0.3\textwidth}
  \includegraphics[width=\textwidth]{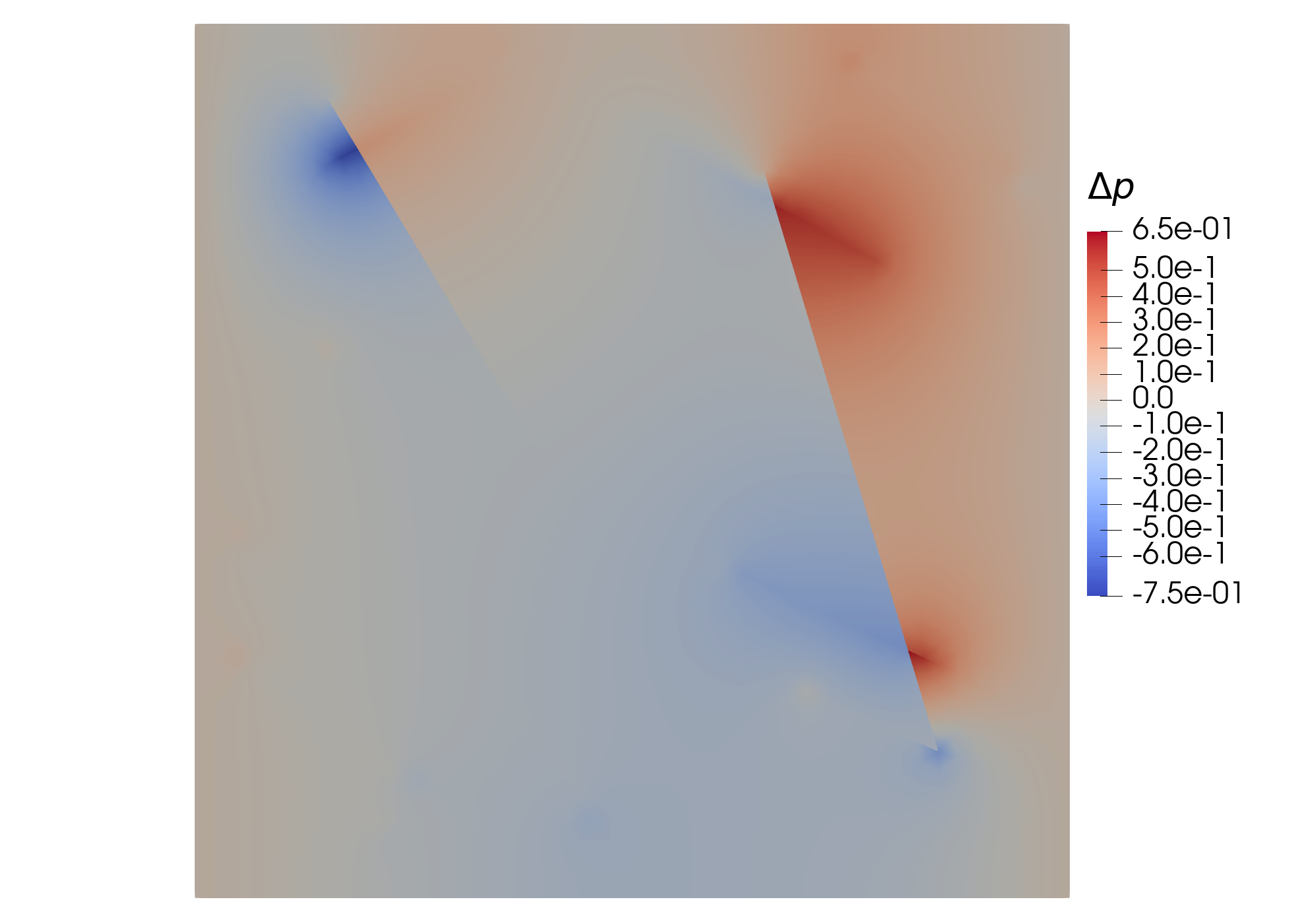}
  \caption{Flow from left to right}
 \end{subfigure}
 \caption{\textbf{Example \ref{ex:complex}: complex fracture-barrier network.}  Visualization of the solution $p$ (row $1$) obtained with the proposed method, and the difference $\Delta p$ (row $2$) to the solution obtained with the ebox-dfm in \cite{glaser2022comparison}. 
 The first scenario is the flow from top to bottom, and the second scenario is the flow from left to right. Both cases contain $1, 391$ vertices and $2, 680$ triangles.
 Pressure discontinuity is assumed at the intersections of fracture and barrier in the box-dfm.}
 \label{fig:complex_contour}
\end{figure}

\begin{figure}[!htbp]
 \centering
 \begin{subfigure}[b]{0.3\textwidth}
  \includegraphics[width=\textwidth]{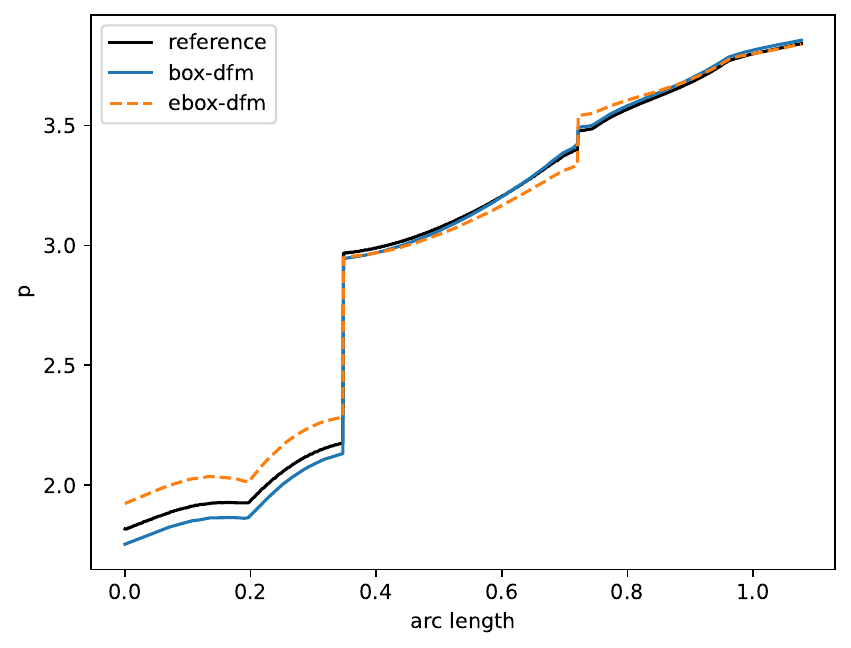}
  \caption{Flow from top to bottom}
 \end{subfigure}
 \begin{subfigure}[b]{0.3\textwidth}
  \includegraphics[width=\textwidth]{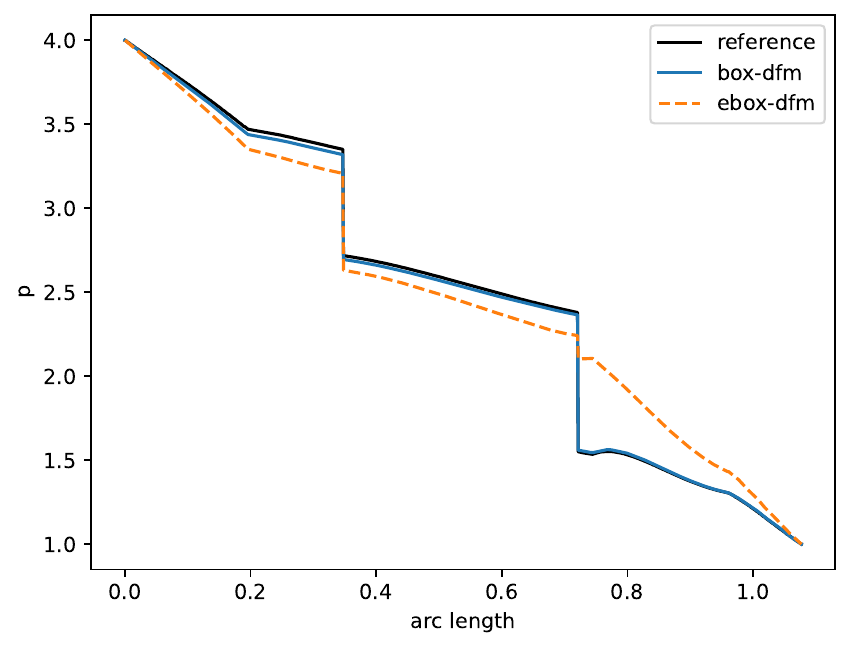}
  \caption{Flow from left to right}
 \end{subfigure}
 \caption{\textbf{Example \ref{ex:complex}: complex fracture-barrier network.} 
 Pressure profiles along the slice (0, 0.5) -- (1, 0.9).
 The reference, provided by the authors in \cite{flemisch2018benchmarks}, is obtained from the mimetic finite difference (MFD) method with $2,260,352$ elements for the equi-dimensional model.
 Pressure discontinuity is assumed at the intersections of fracture and barrier in the box-dfm.
 }
 \label{fig:complex_slice}
\end{figure}

\begin{figure}[!htbp]
 \centering
 \begin{subfigure}[b]{0.3\textwidth}
  \includegraphics[width=\textwidth]{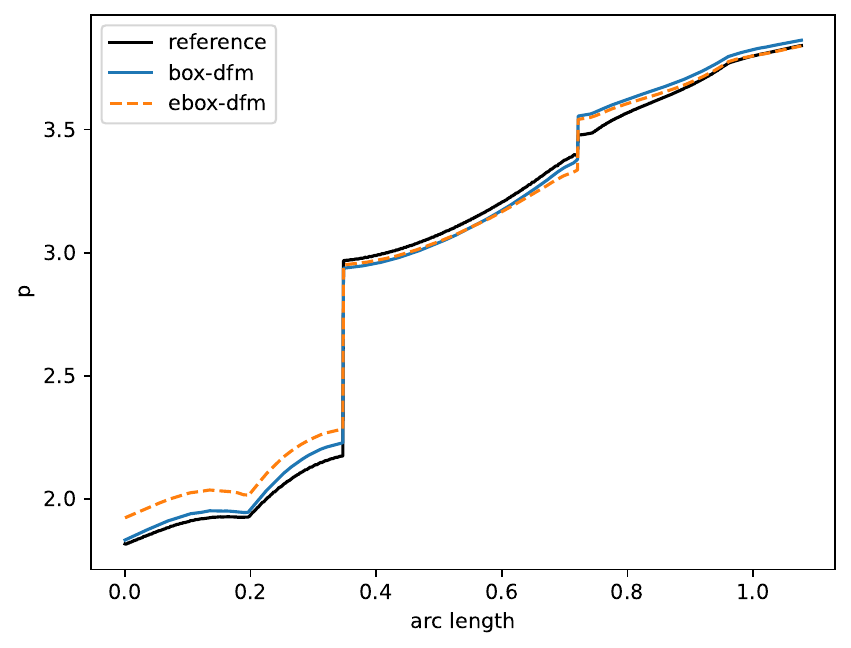}
  \caption{Flow from top to bottom}
 \end{subfigure}
 \begin{subfigure}[b]{0.3\textwidth}
  \includegraphics[width=\textwidth]{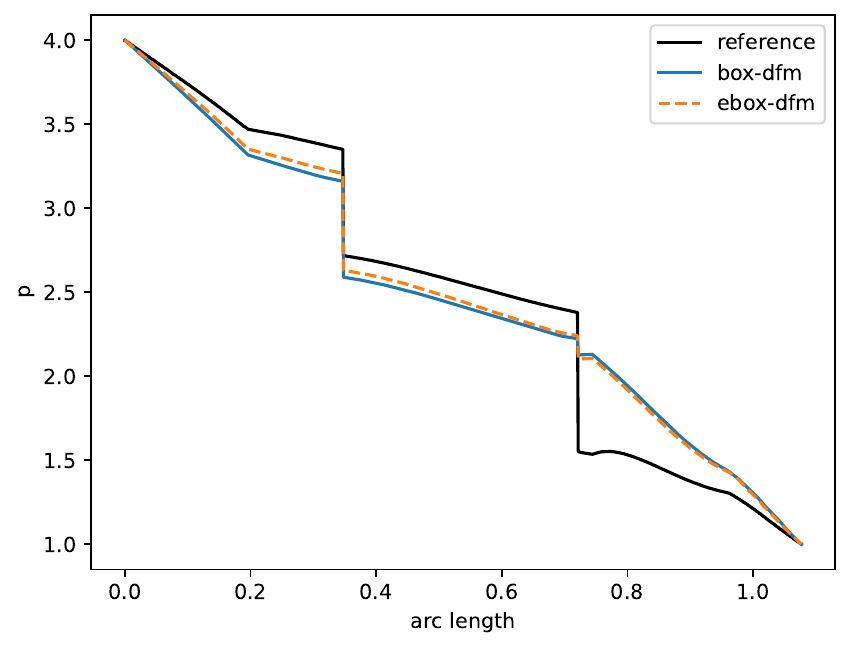}
  \caption{Flow from left to right}
 \end{subfigure}
 \caption{\textbf{Example \ref{ex:complex}: complex fracture-barrier network.} 
 Pressure profiles along the slice (0, 0.5) -- (1, 0.9).
 Pressure continuity is assumed at the intersections of fracture and barrier in the box-dfm.
 }
 \label{fig:complex_slice_case_b}
\end{figure}

\end{exmp}

\begin{exmp}\label{ex:real}
{Realistic barrier network}

In this example, we test the case of a realistic barrier network containing 64 low-permeable barriers whose distribution is based on an image of an outcrop near Bergen, Norway.
This test case is taken from \cite{glaser2022comparison}, where the authors transformed high-permeable fractures from the original test case presented in \cite{flemisch2018benchmarks} into a variant featuring low-permeable barriers to validate various discrete fracture models.

The computational domain is $\Omega=[0, 700\si{m}]\times [0, 600\si{m}]$, and a flow from left to right is driven by the Dirichlet boundary condition $g_D=1,013,250\si{m}$ at $x=0\si{m}$ and $g_D=0\si{m}$ at $x=700\si{m}$, along with impermeable boundary conditions at $y=0\si{m}$ and $y=600\si{m}$. 
All barriers in the domain have a uniform aperture of $a=10^{-2}\si{m}$ and permeability $k_b=10^{-18}\si{m/s}$. 
The permeability of the porous matrix is set to $k_m=10^{-14}\si{m/s}$. 
For detailed coordinates of the barriers, we refer the readers to \cite{flemisch2018benchmarks}.

We conduct the computation on three grids with different levels of refinement and compare the results of our extended box-dfm with those obtained from the ebox-dfm. The results of tpfa-dfm, mpfa-dfm, and ebox-dfm-mortar methods for the same test case are available in \cite{glaser2022comparison} for a more comprehensive comparison.
From the pressure contours and slices shown in Figure \ref{fig:real_contour} and Figure \ref{fig:real_plot}, respectively, one can observe the convergence between the results of the box-dfm and ebox-dfm with mesh refinement. 
The pressure profiles along (625, 0) -- (625, 600) show that our method's results on a coarse mesh exhibit better agreement with the reference solution, {\color{black}which is obtained using the ebox-dfm on a fine mesh consisting of $307, 829$ cells, while the finest mesh of the refinement series consists of $114, 721$ cells.}.

\begin{figure}[!htbp]
 \centering
 \begin{subfigure}[b]{0.3\textwidth}
  \includegraphics[width=\textwidth]{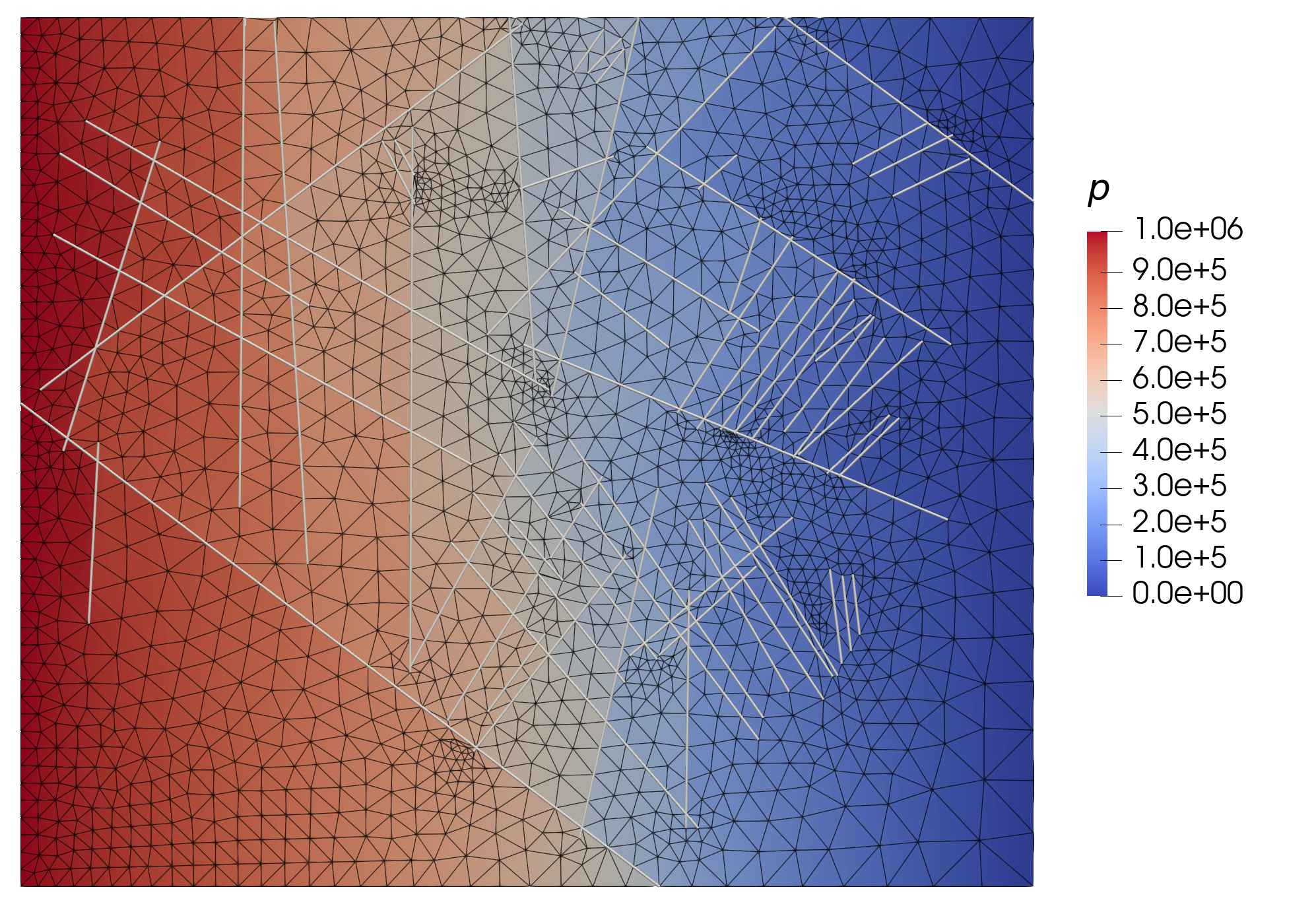}
  \caption{coarse grid}
 \end{subfigure}
 \begin{subfigure}[b]{0.3\textwidth}
  \includegraphics[width=\textwidth]{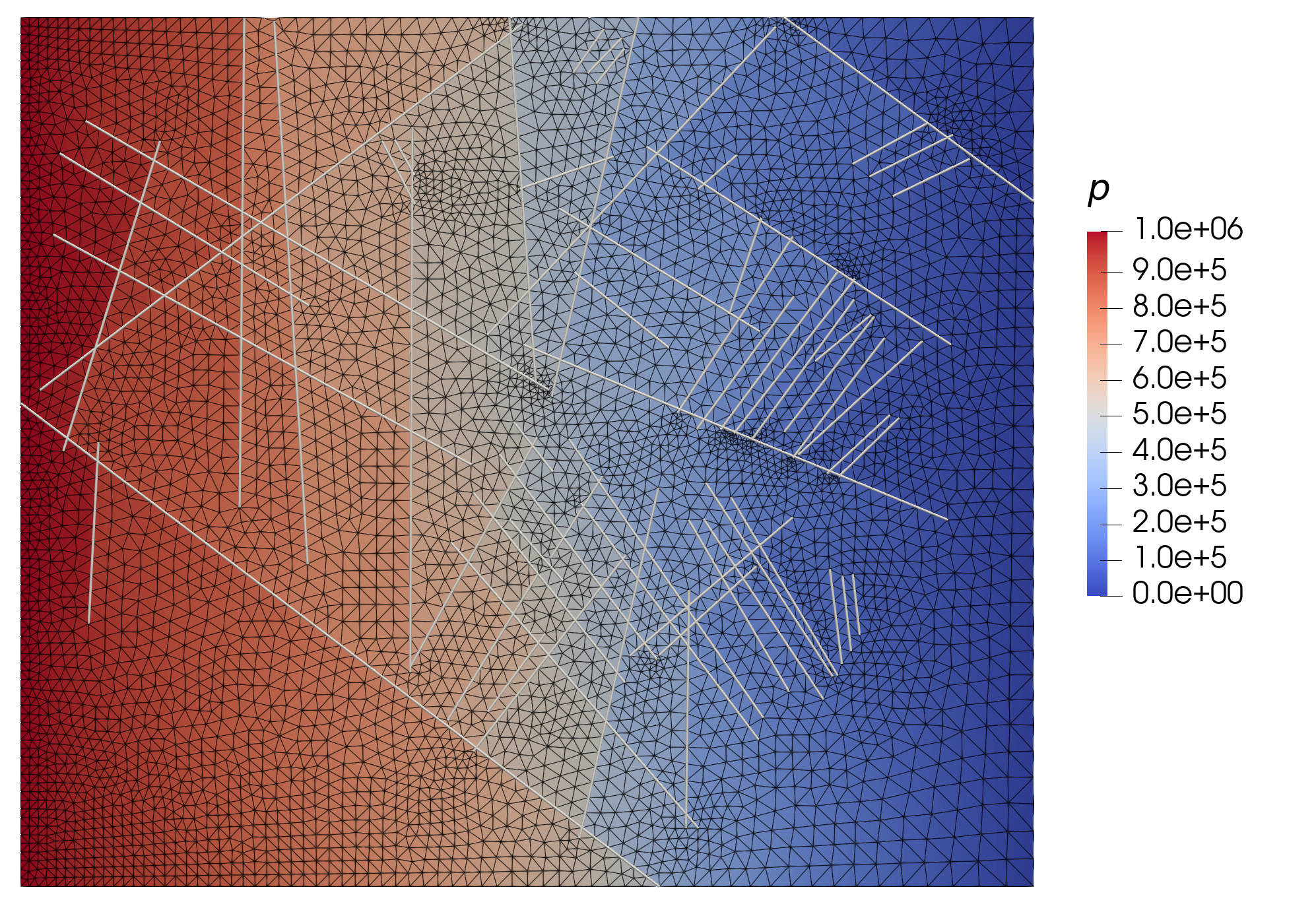}
  \caption{medium grid}
 \end{subfigure}
 \begin{subfigure}[b]{0.3\textwidth}
  \includegraphics[width=\textwidth]{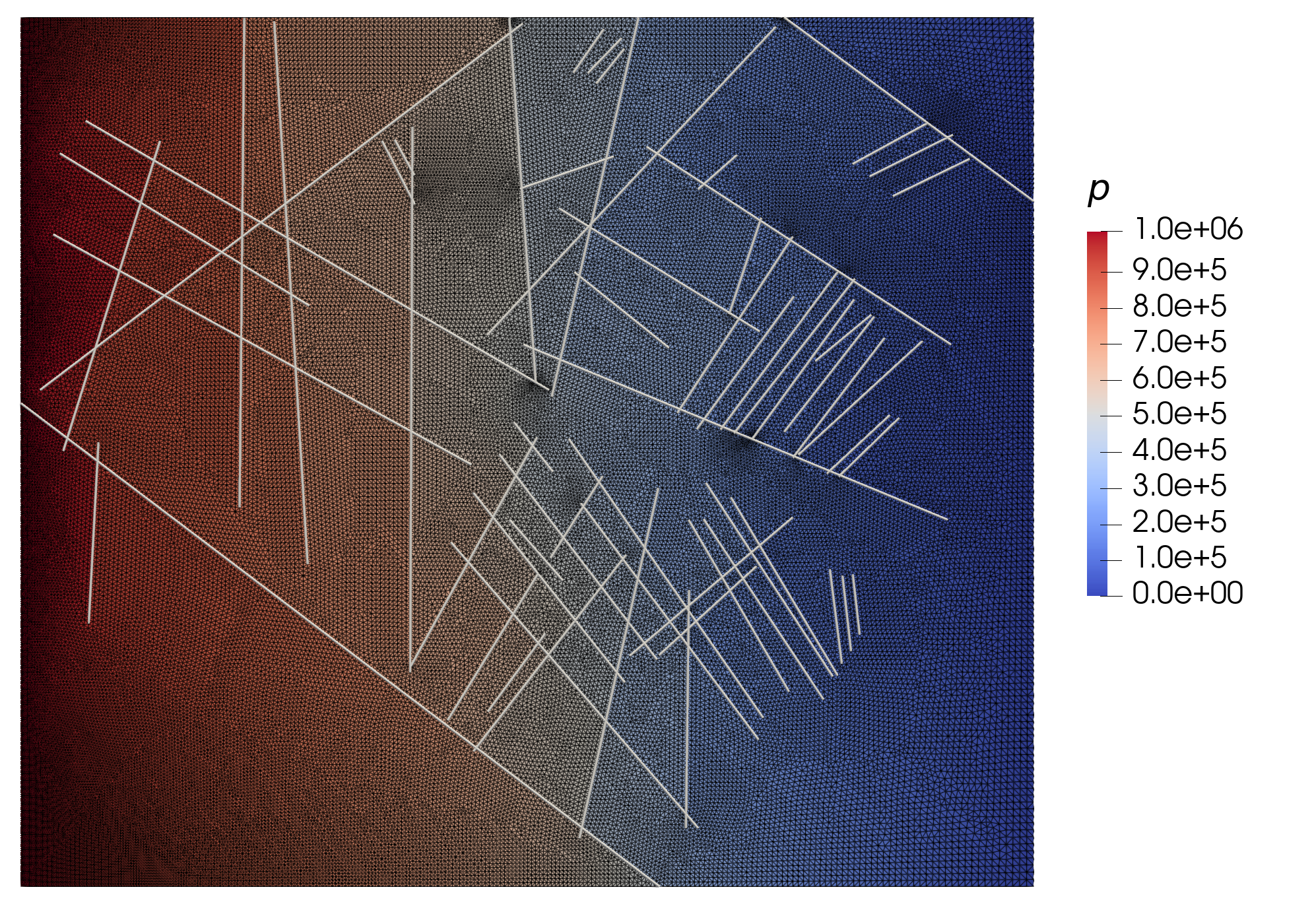}
  \caption{fine grid}
 \end{subfigure}

 \begin{subfigure}[b]{0.3\textwidth}
  \includegraphics[width=\textwidth]{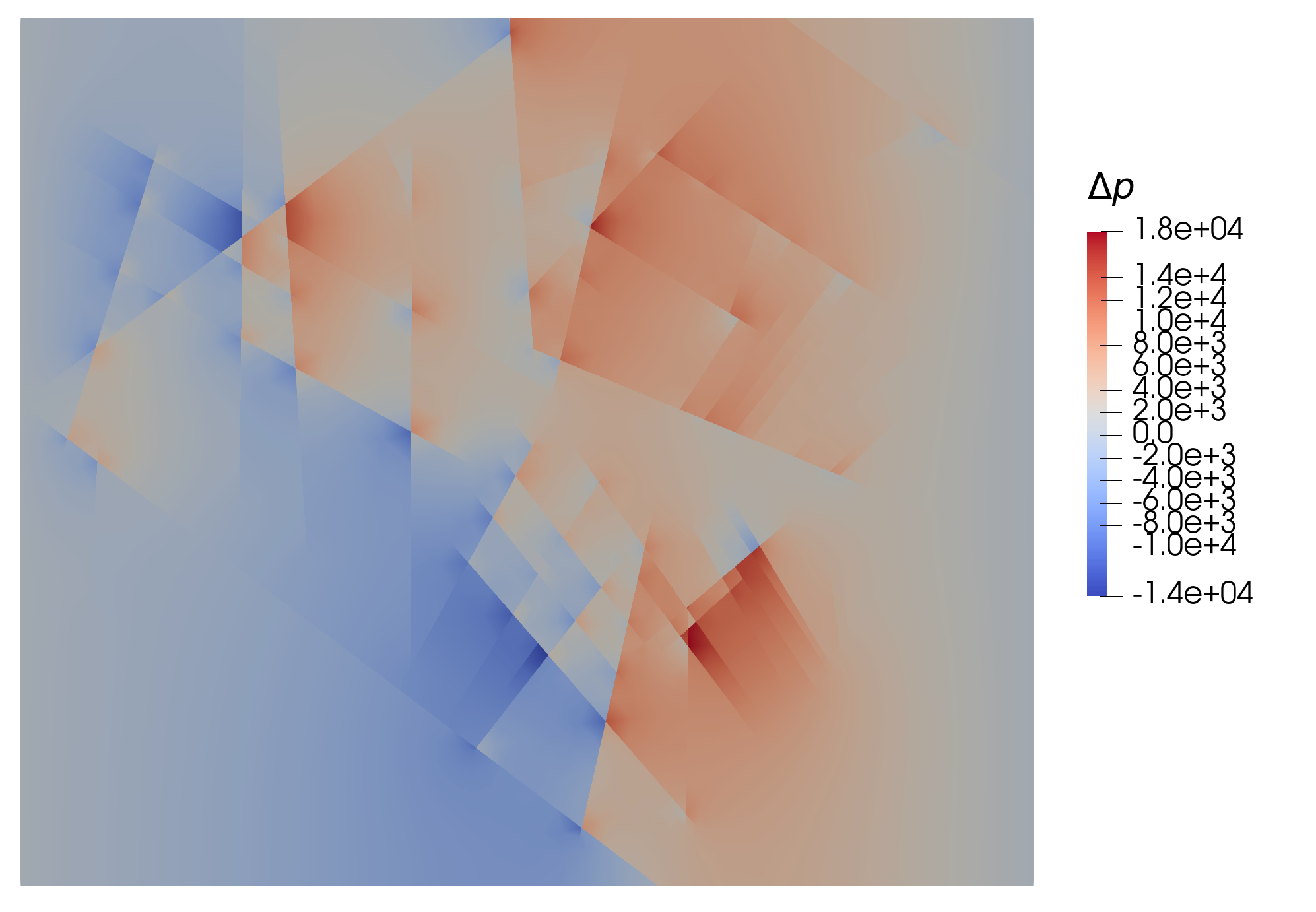}
  \caption{coarse grid}
 \end{subfigure}
 \begin{subfigure}[b]{0.3\textwidth}
  \includegraphics[width=\textwidth]{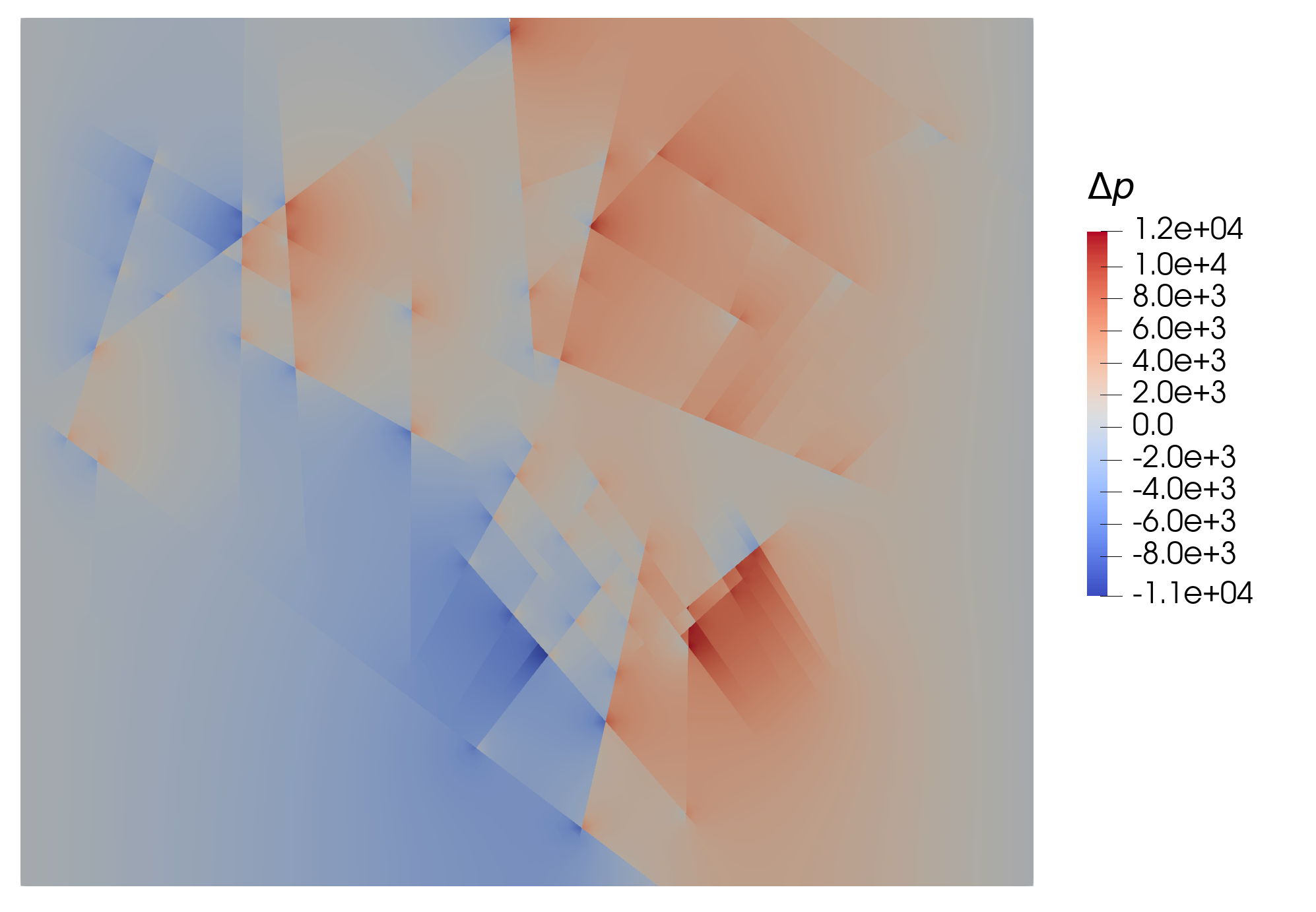}
  \caption{medium grid}
 \end{subfigure}
 \begin{subfigure}[b]{0.3\textwidth}
  \includegraphics[width=\textwidth]{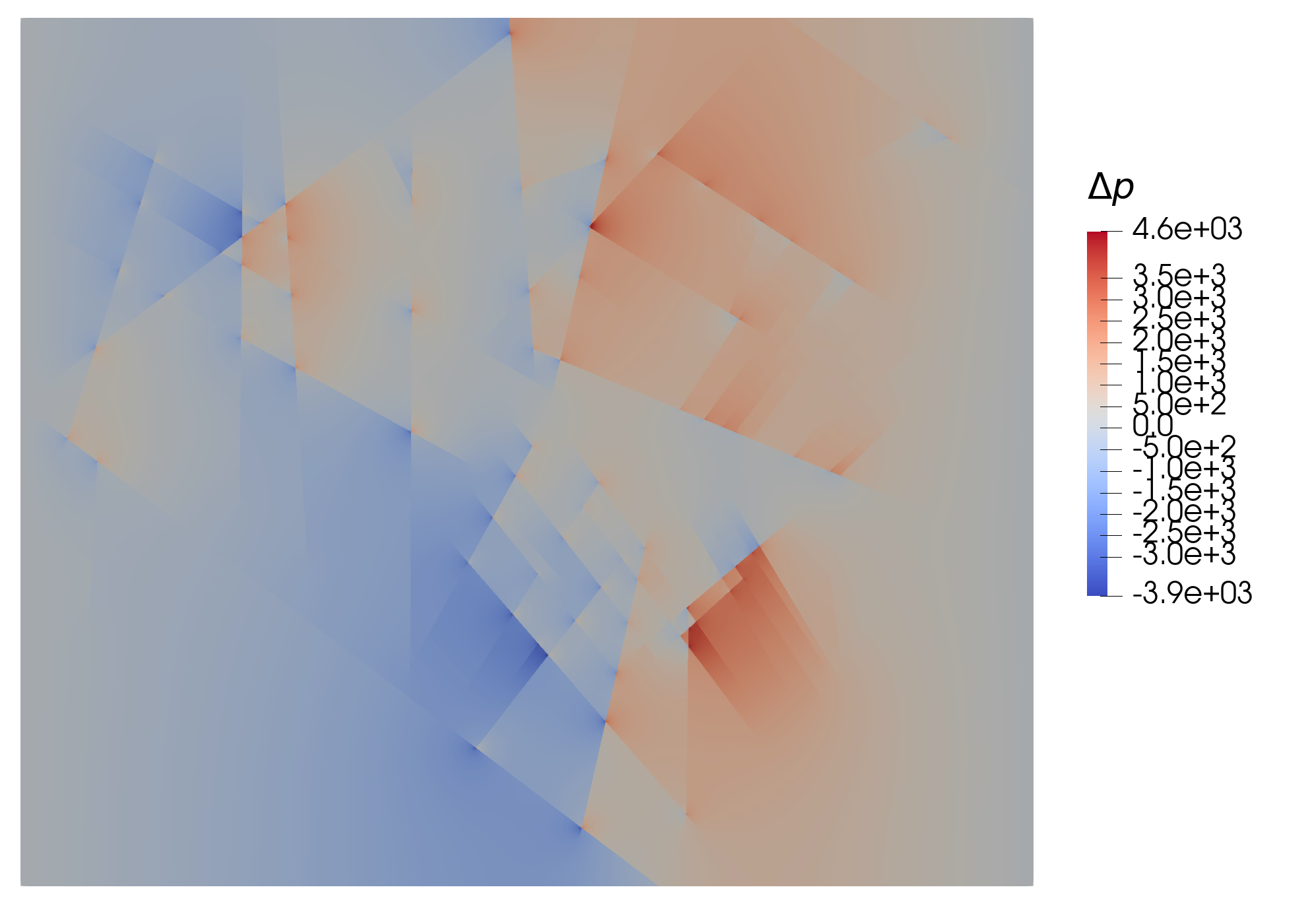}
  \caption{fine grid}
 \end{subfigure}
 \caption{\textbf{Example \ref{ex:real}: realistic barrier network.} 
 Visualization of the solution $p$ (row $1$) obtained from the proposed method, and the difference $\Delta p$ (row $2$) to the solution obtained with the ebox-dfm in \cite{glaser2022comparison}. The coarse grid contains $1, 884$ vertices and $3, 611$ triangles; the medium grid contains $4, 968$ vertices and $9, 646$ triangles; the fine grid contains $57, 904$ vertices and $114, 721$ triangles.}
 \label{fig:real_contour}
\end{figure}

\begin{figure}[!htbp]
 \centering
 \begin{subfigure}[b]{0.3\textwidth}
  \includegraphics[width=\textwidth]{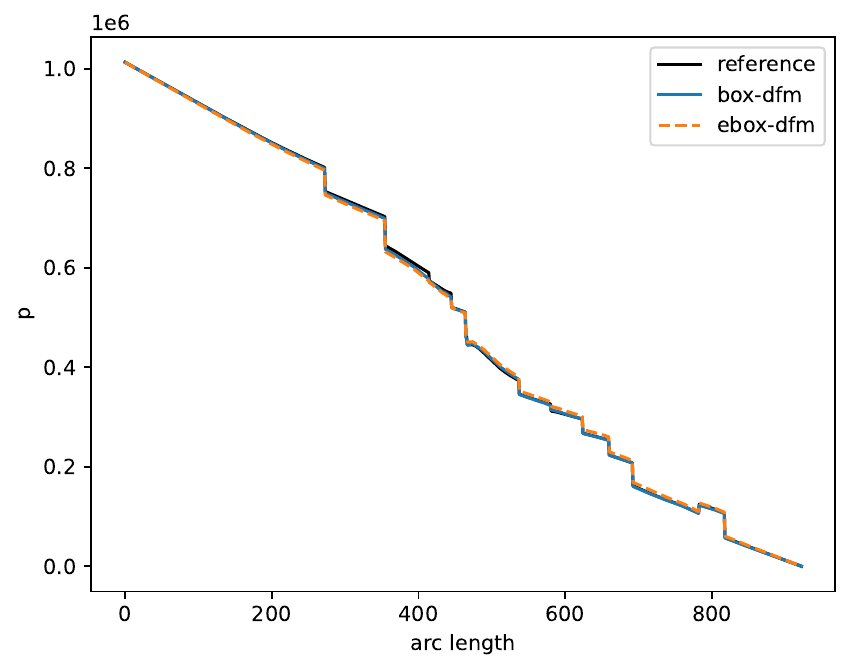}
  \caption{coarse grid}
 \end{subfigure}
 \begin{subfigure}[b]{0.3\textwidth}
  \includegraphics[width=\textwidth]{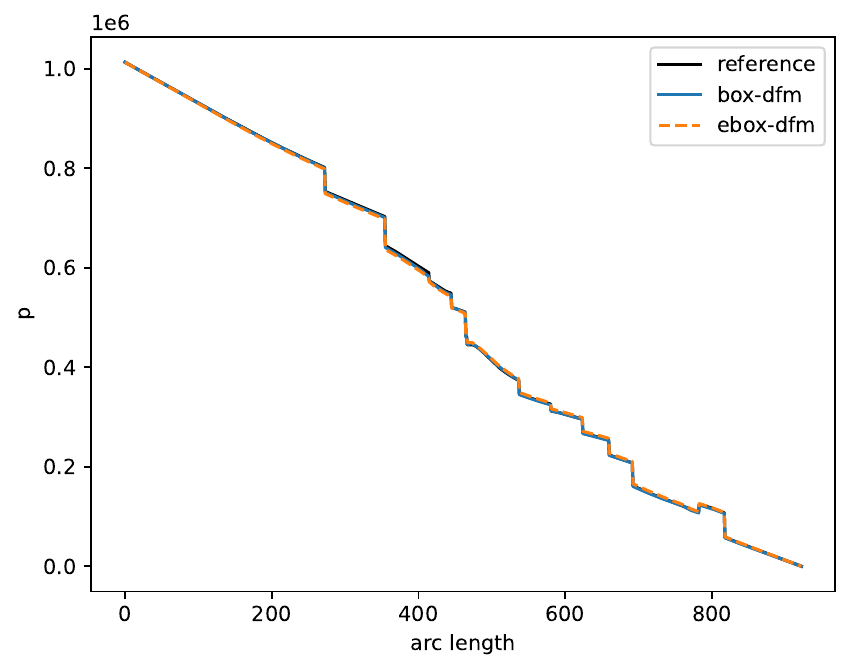}
  \caption{medium grid}
 \end{subfigure}
 \begin{subfigure}[b]{0.3\textwidth}
  \includegraphics[width=\textwidth]{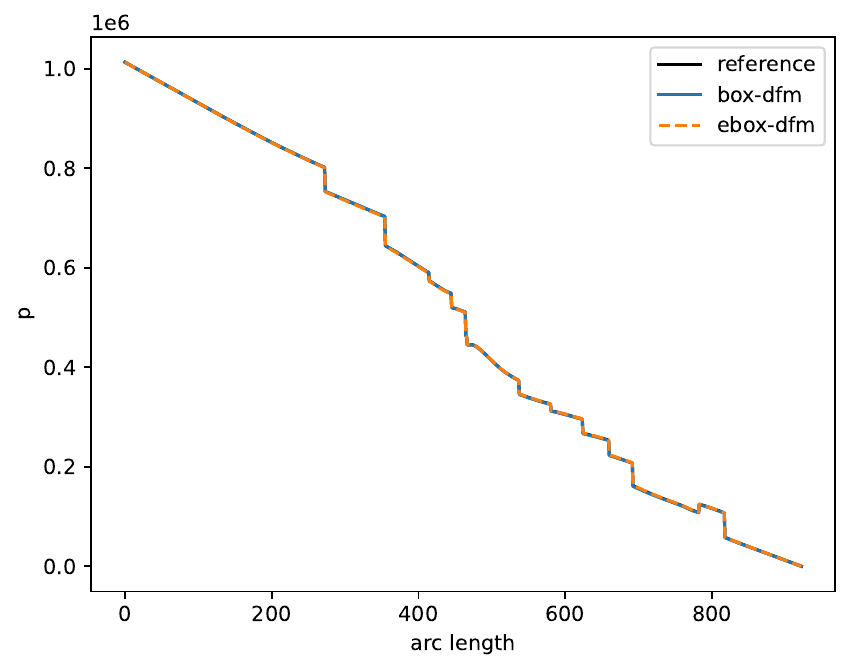}
  \caption{fine grid}
 \end{subfigure}
 
 \begin{subfigure}[b]{0.3\textwidth}
  \includegraphics[width=\textwidth]{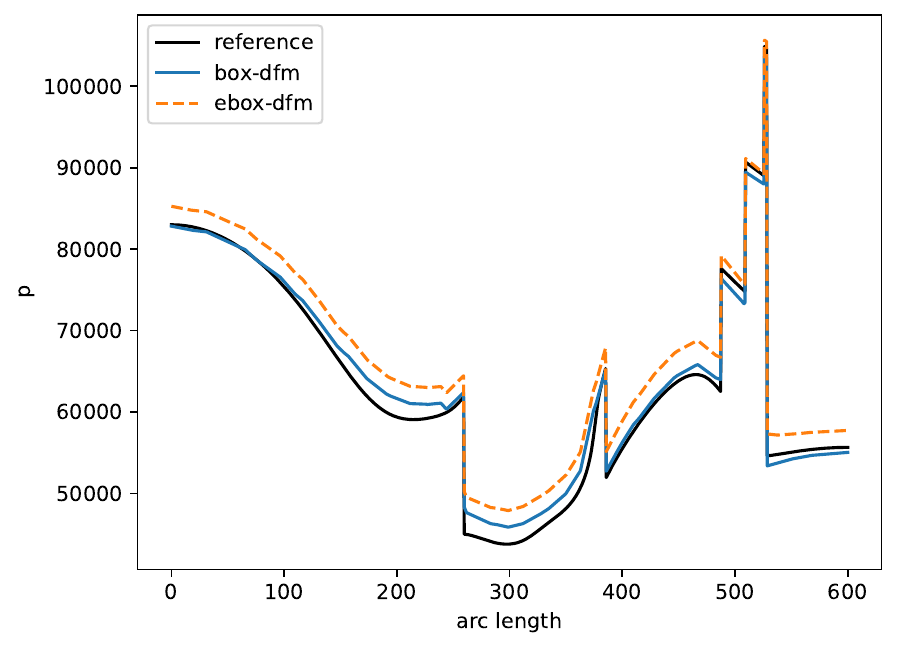}
  \caption{coarse grid}
 \end{subfigure}
 \begin{subfigure}[b]{0.3\textwidth}
  \includegraphics[width=\textwidth]{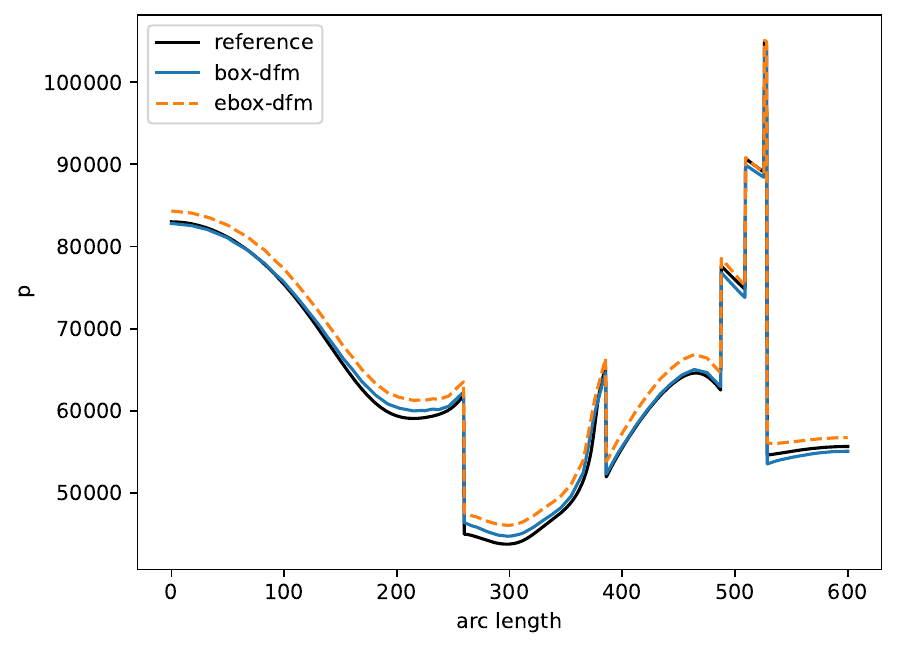}
  \caption{medium grid}
 \end{subfigure}
 \begin{subfigure}[b]{0.3\textwidth}
  \includegraphics[width=\textwidth]{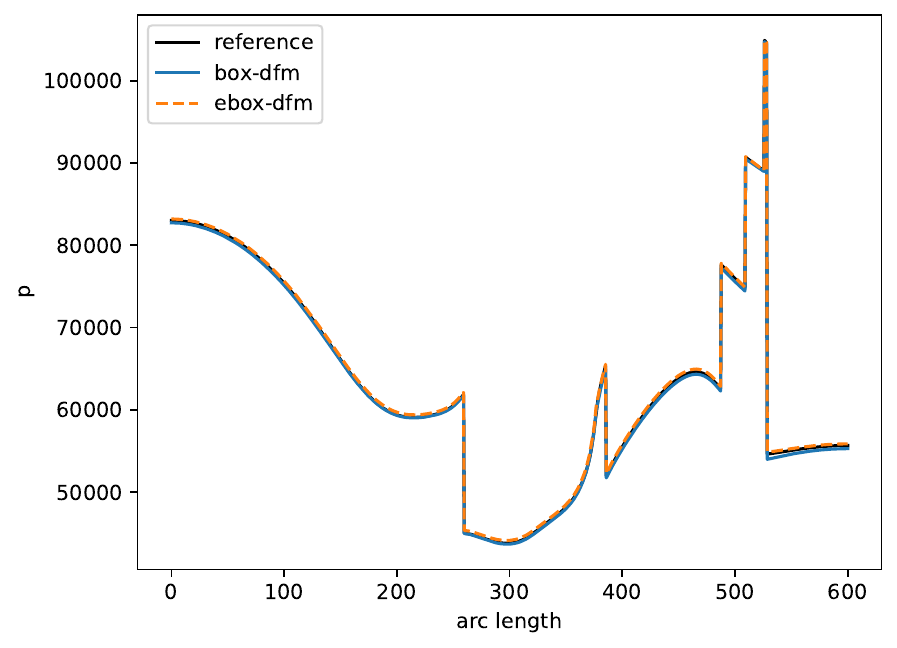}
  \caption{fine grid}
 \end{subfigure}
 \caption{\textbf{Example \ref{ex:real}: realistic barrier network.} Pressure profiles along the slice (0,0) -- (700, 600) (row $1$) and along the slice (625, 0) -- (625, 600) (row $2$). 
 The coarse grid contains $1, 884$ vertices and $3, 611$ triangles; the medium grid contains $4, 968$ vertices and $9, 646$ triangles; the fine grid contains $57, 904$ vertices and $114, 721$ triangles.
 The reference is obtained from the ebox-dfm with $154, 814$ vertices and $307, 829$ triangles.
 }
 \label{fig:real_plot}
\end{figure}

\end{exmp}

\begin{exmp}\label{ex:regular3D}
{Three-dimensional test case}

In this example, we test the flow in the unit cube $\Omega=[0, 1]^3$, containing $9$ regularly oriented barriers. 
One can find the exact coordinates of the barriers in case $2$ presented in \cite{berre2021verification}. 
The boundary $\partial\Omega$ is divided into three parts. 
The first part, $\partial\Omega_{D}=\{(x,y,z)\in\partial\Omega: x,y,z>0.875\}$, is imposed with the Dirichlet condition $g_D=1$; the second part, $\partial\Omega_{N}=\{(x,y,z)\in\partial\Omega: x,y,z<0.25\}$, is imposed with an inflow boundary condition $g_N=1$; the remaining boundaries are impermeable.
All barriers have a uniform aperture $a=10^{-4}$ and permeability $k_b=10^{-4}$.
The permeability of the porous matrix is different in two bulk regions.
In $\Omega_{1}=\{(x,y,z)\in\Omega: x>0.5, y<0.5\}\cup\{(x,y,z)\in\Omega: x>0.75, 0.5<y<0.75, z>0.5\}\cup\{(x,y,z)\in\Omega: 0.625<x<0.75, 0.5<y<0.625, 0.5<z<0.75\}$, we take $k_{m1}=0.1$; in $\Omega_2=\Omega\setminus\Omega_{1}$, we take $k_{m2}=1$.

We present the pressure contours and slices in Figure \ref{fig:regular3D_contour} and Figure \ref{fig:regular3D_plot}, respectively. 
As before, the convergence of box-dfm and ebox-dfm can be observed with grid refinement.
It can be seen from the slices that the ebox-dfm is closer to the reference solution on the coarse grid, while our proposed method is closer to the reference on the fine grid, {\color{black} where the reference solution is obtained from the data of the original benchmark study in \cite{berre2021verification}}.

\begin{figure}[!htbp]
 \centering
 \begin{subfigure}[b]{0.3\textwidth}
  \includegraphics[width=\textwidth]{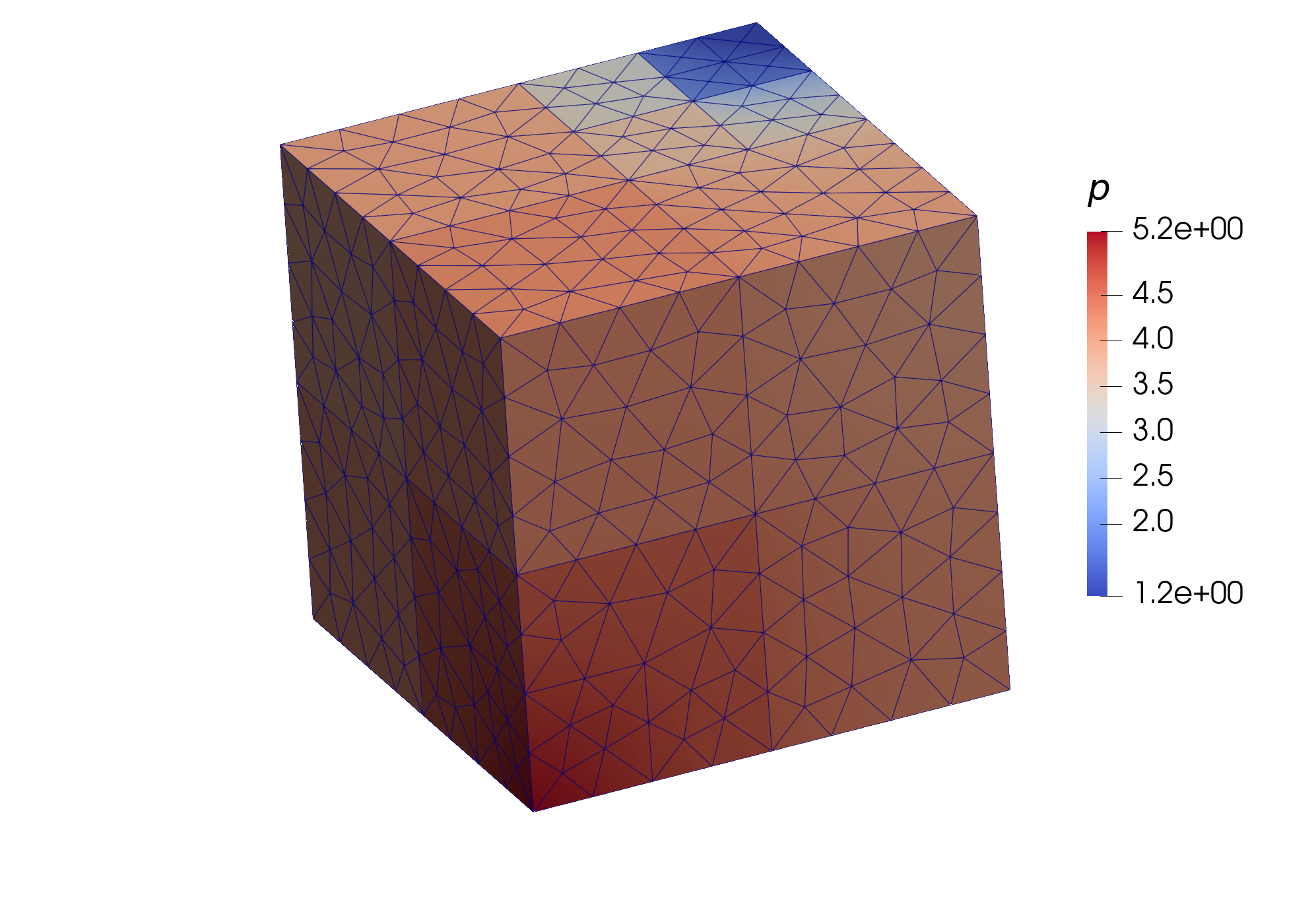}
  \caption{coarse grid}
 \end{subfigure}
 \begin{subfigure}[b]{0.3\textwidth}
  \includegraphics[width=\textwidth]{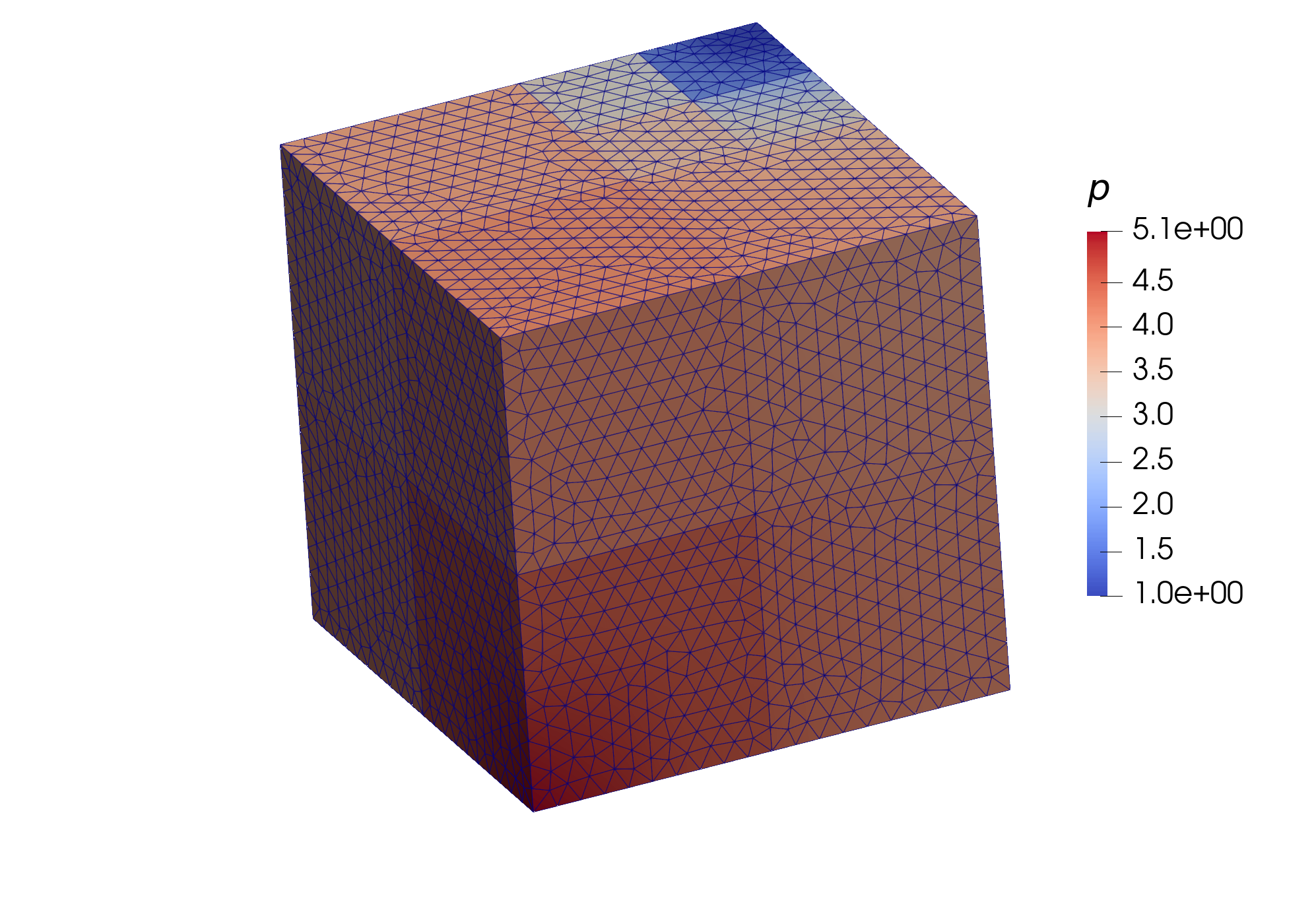}
  \caption{medium grid}
 \end{subfigure}
 \begin{subfigure}[b]{0.3\textwidth}
  \includegraphics[width=\textwidth]{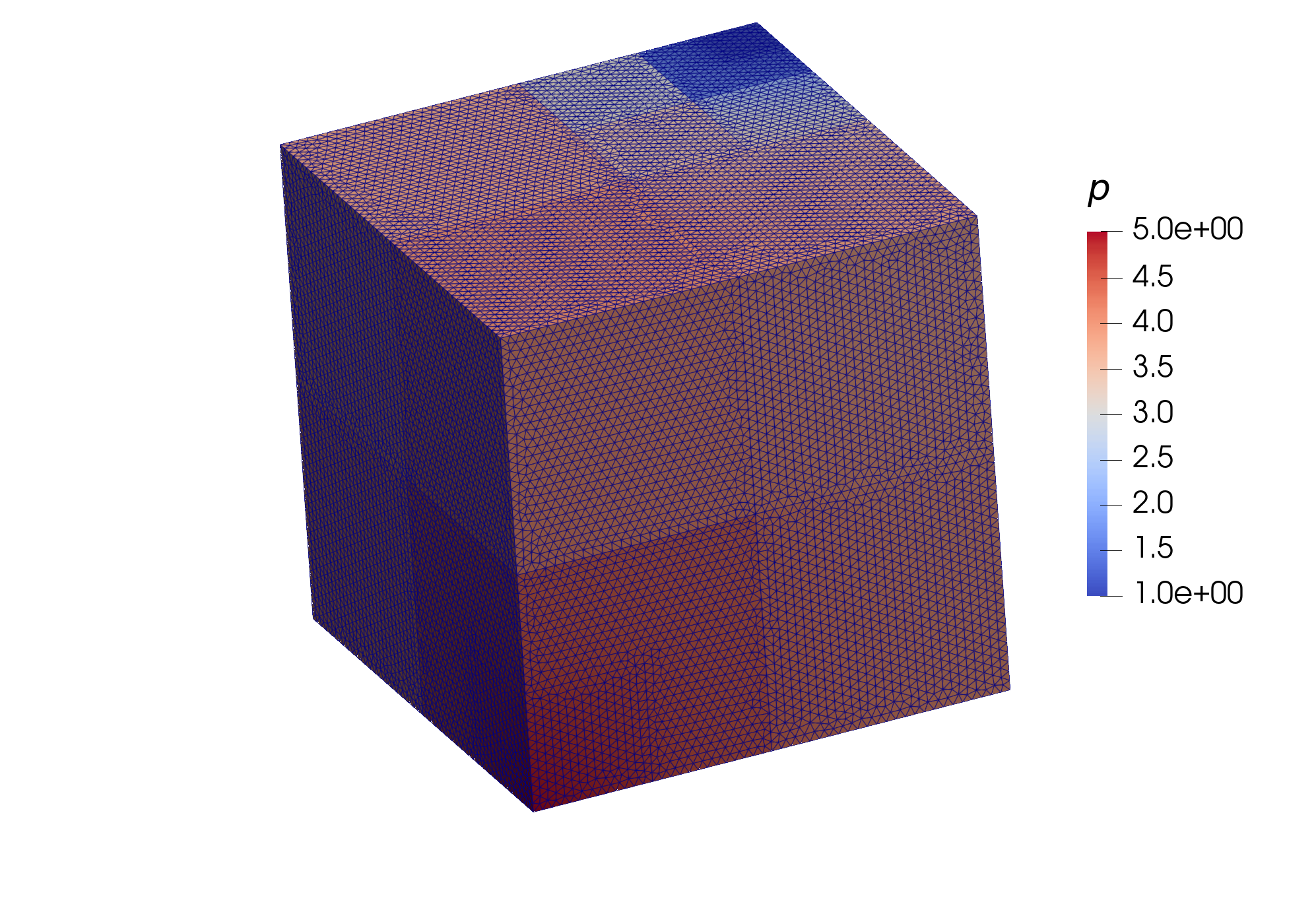}
  \caption{fine grid}
 \end{subfigure}

 \begin{subfigure}[b]{0.3\textwidth}
  \includegraphics[width=\textwidth]{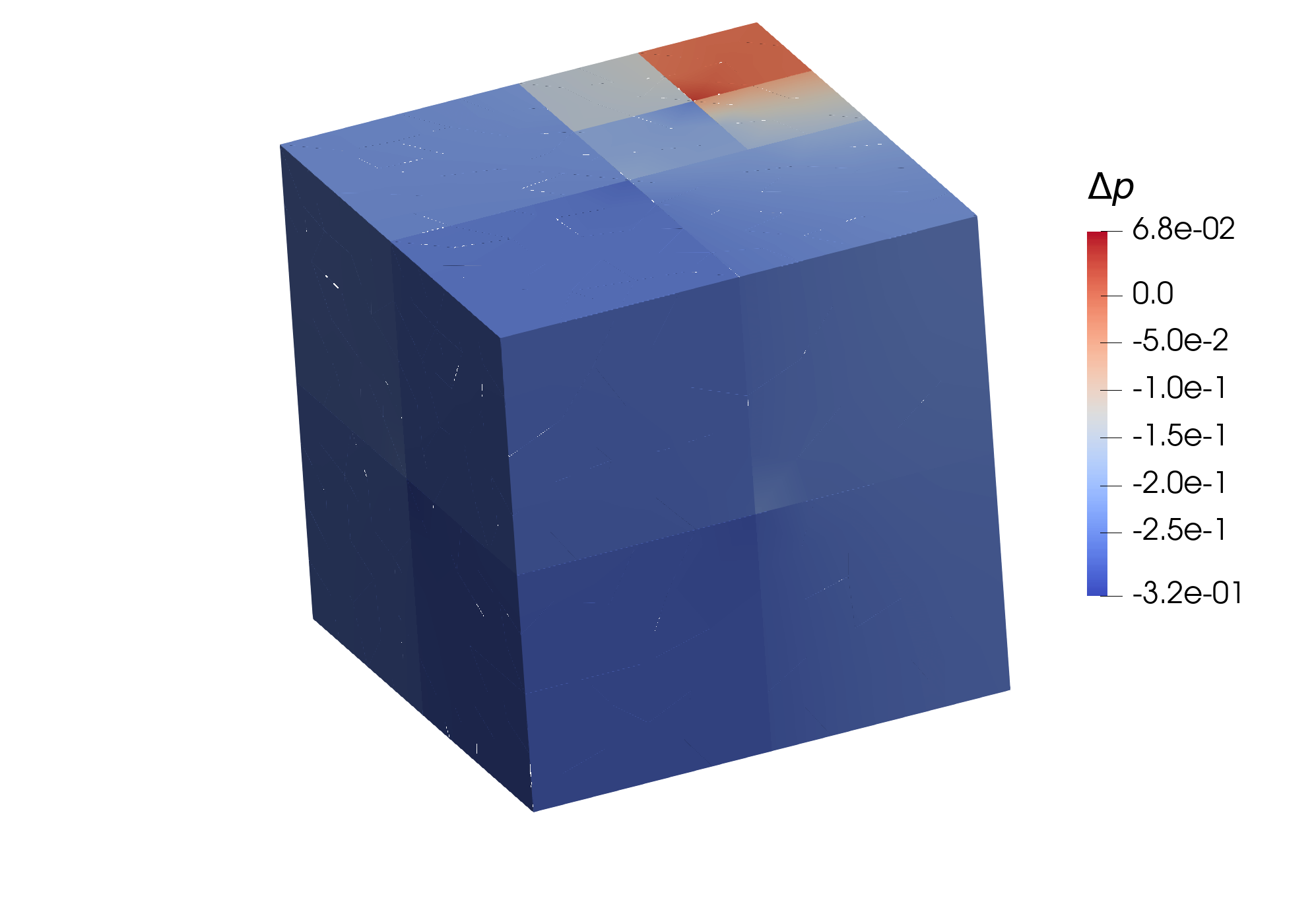}
  \caption{coarse grid}
 \end{subfigure}
 \begin{subfigure}[b]{0.3\textwidth}
  \includegraphics[width=\textwidth]{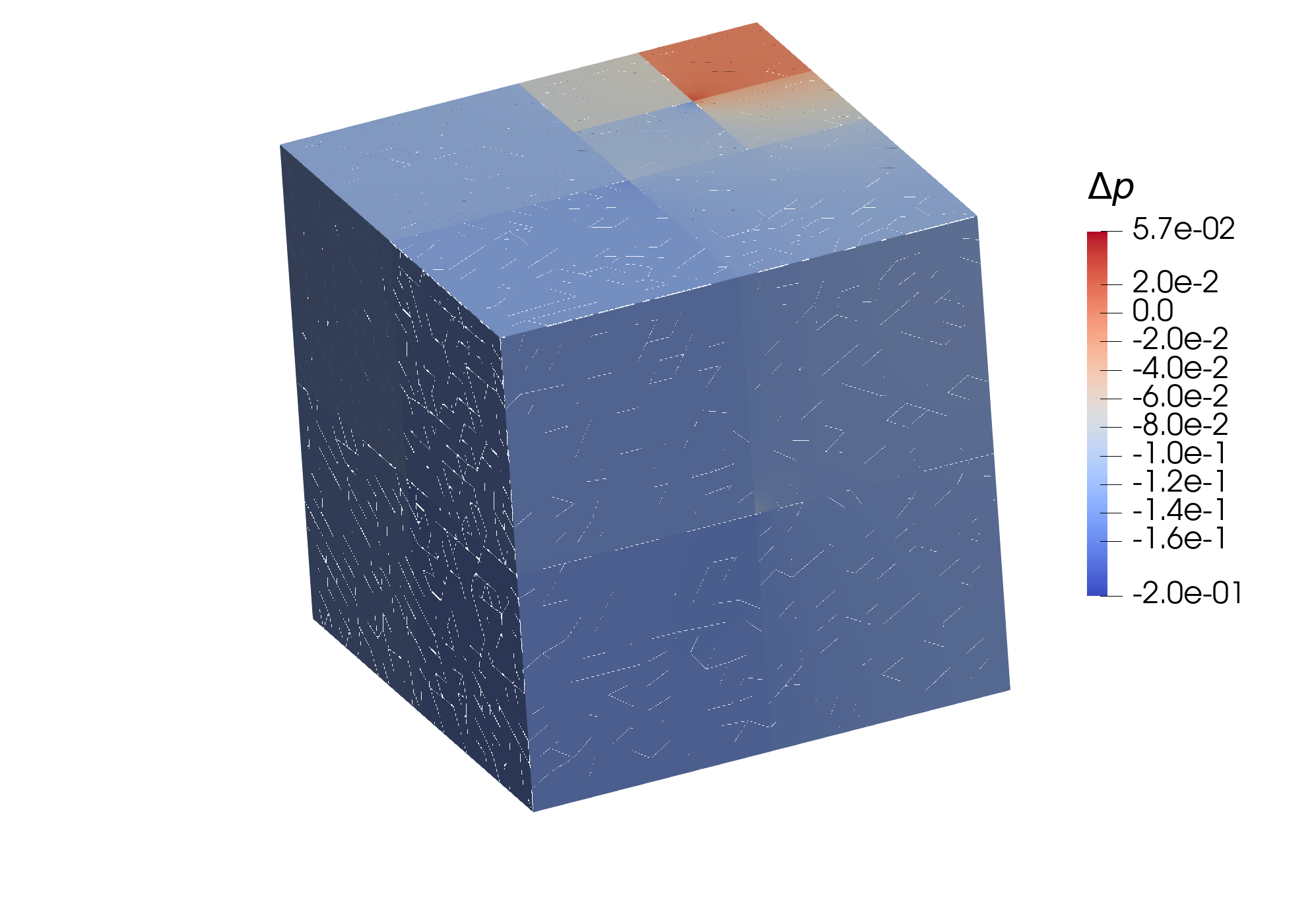}
  \caption{medium grid}
 \end{subfigure}
 \begin{subfigure}[b]{0.3\textwidth}
  \includegraphics[width=\textwidth]{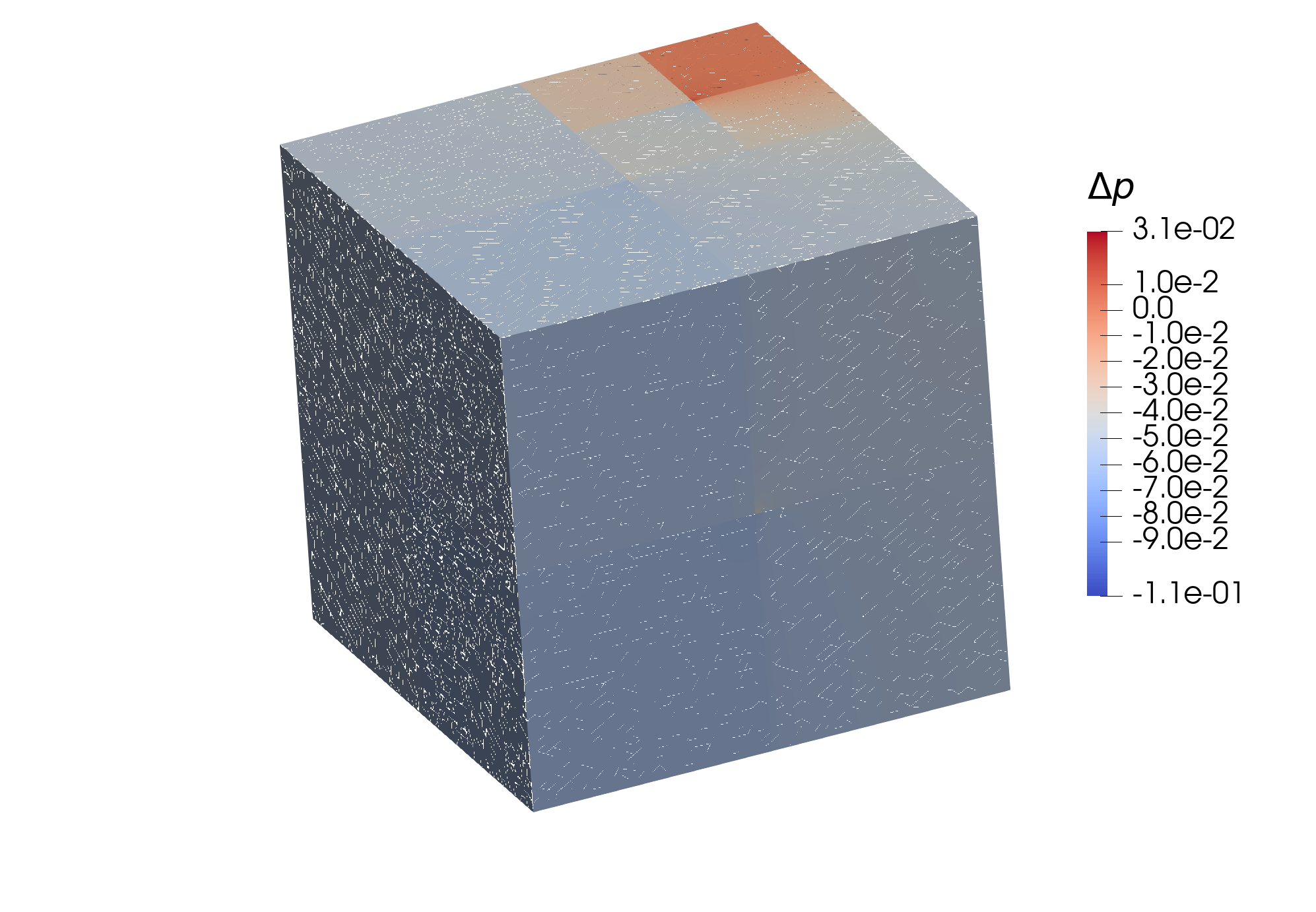}
  \caption{fine grid}
 \end{subfigure}
 \caption{\textbf{Example \ref{ex:regular3D}: three-dimensional test case.} Visualization of the solution $p$ (row $1$) obtained from the proposed method, and the difference $\Delta p$ (row $2$) to the solution obtained from the ebox-dfm in \cite{glaser2022comparison}.
 The coarse grid contains $948$ vertices and $3, 840$ tetrahedrons; the medium grid contains $8, 316$ vertices and $41, 414$ tetrahedrons; the fine grid contains $104, 343$ vertices and $593, 347$ tetrahedrons.
 }
 \label{fig:regular3D_contour}
\end{figure}

\begin{figure}[!htbp]
 \centering
 \begin{subfigure}[b]{0.3\textwidth}
  \includegraphics[width=\textwidth]{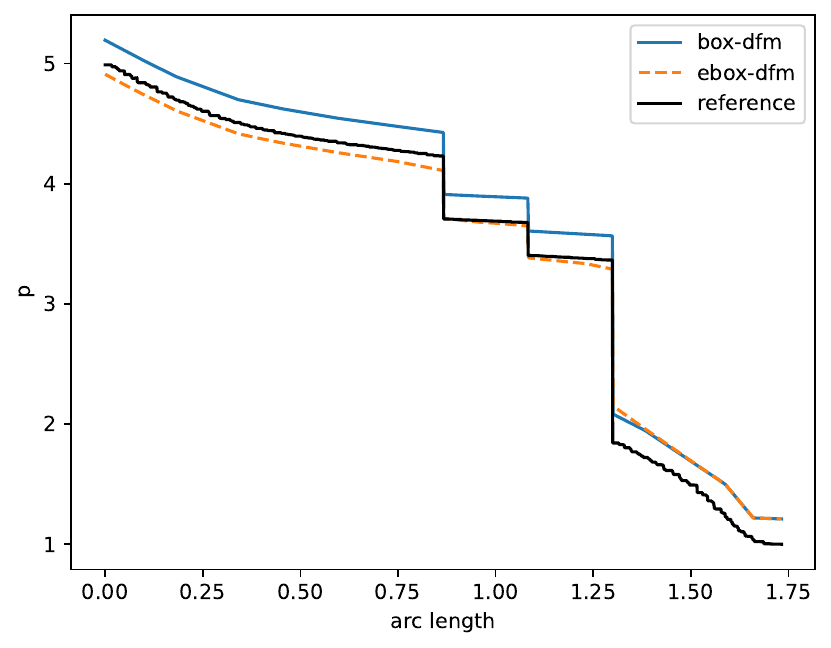}
  \caption{coarse grid}
 \end{subfigure}
 \begin{subfigure}[b]{0.3\textwidth}
  \includegraphics[width=\textwidth]{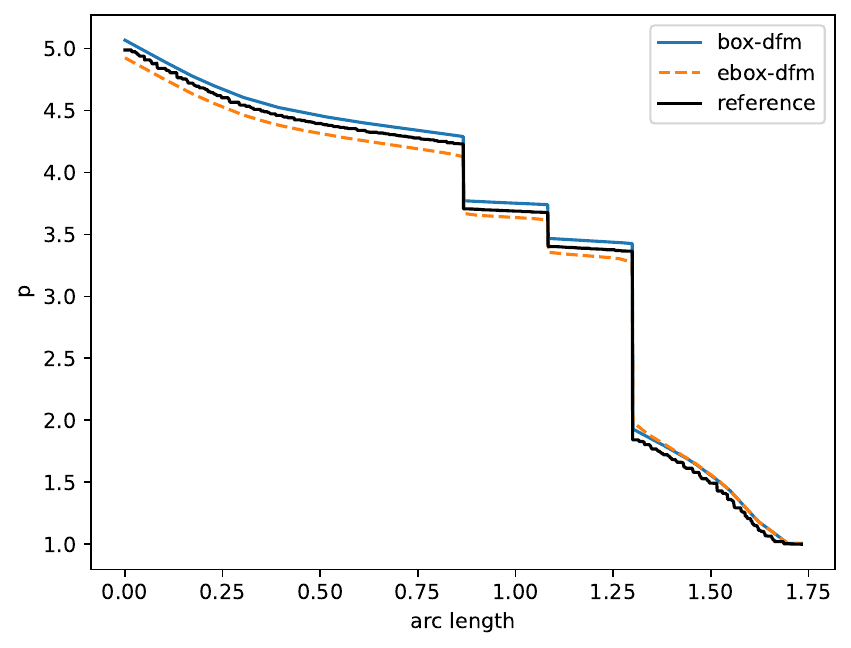}
  \caption{medium grid}
 \end{subfigure}
 \begin{subfigure}[b]{0.3\textwidth}
  \includegraphics[width=\textwidth]{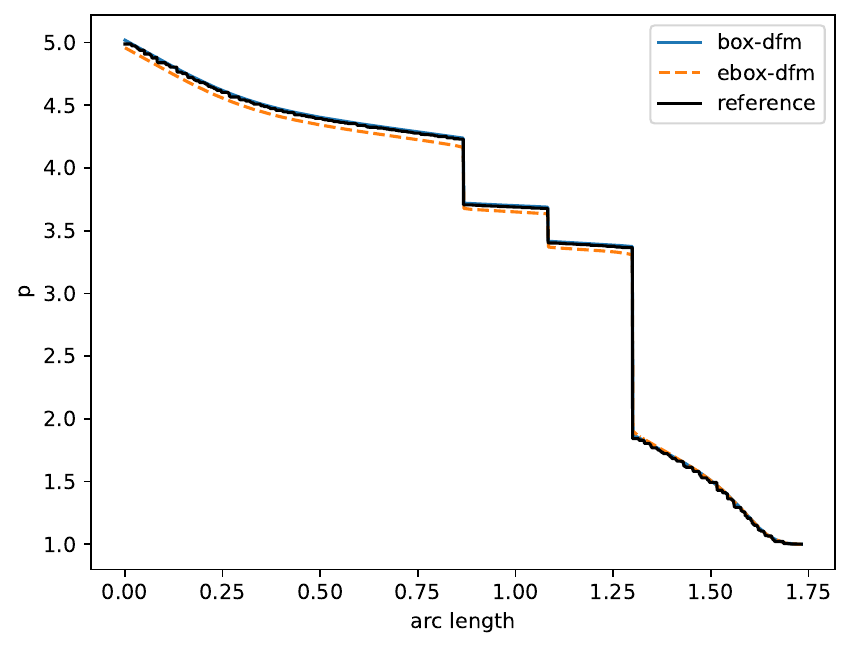}
  \caption{fine grid}
 \end{subfigure}
 \caption{\textbf{Example \ref{ex:regular3D}: three-dimensional test case.} 
 Pressure profiles along the line (0, 0, 0) -- (1, 1, 1).
 The coarse grid contains $948$ vertices and $3, 840$ tetrahedrons; the medium grid contains $8, 316$ vertices and $41, 414$ tetrahedrons; the fine grid contains $104, 343$ vertices and $593, 347$ tetrahedrons.
 The reference, provided by the authors in \cite{berre2021verification}, is obtained from the control volume multi-point flux approximation (MPFA) method with $1,046,566$ elements for the hybrid-dimensional model \cite{koch2021dumux}.
 }
 \label{fig:regular3D_plot}
\end{figure}

\end{exmp}

\begin{exmp}\label{ex:validity}
{Range of validity}

So far, we have tested only cases of barriers with low permeability in both the normal and tangential directions. In this experiment, we investigate the range of validity of the method for barriers with various tangential permeabilities \cite{angot2009asymptotic, zhao2023discrete}.

We consider the computational domain $\Omega=[0,1]^2$ with matrix permeability $\mathbf{K}_m=\mathbf{I}$.
The domain contains four barriers, $\gamma_1, \gamma_2, \gamma_3$, and $\gamma_4$, whose axes are given by: (0.3, 0.2) -- (1.0, 0.2), (0.0, 0.4) -- (0.7, 0.4), (0.3, 0.6) -- (1.0, 0.6), and (0.0, 0.8) -- (0.7 0.8), respectively. 
All barriers have a uniform aperture $a=10^{-3}$ and normal permeability $k_{n}=10^{-3}$.
We set the tangential permeability of $\gamma_2$ and $\gamma_4$ equal to their normal direction, i.e., $k_{\tau}=10^{-3}$.
For the barriers $\gamma_1$ and $\gamma_3$, their tangential permeabilities take various values: $k_{\tau}=10^{-3}, 1$ or $10^{3}$.

Two sub-cases are tested as in \cite{angot2009asymptotic, zhao2023discrete}.
In sub-case (a), the left and right boundaries are impermeable, i.e., $q_N=0$, and the top and bottom boundaries are set as Dirichlet with $g_D=1$ and $g_D=0$, respectively.
In sub-case (b), the boundary condition is purely Dirichlet with $g_D(x,y)=(2x-1)(3x-1)$.

We conduct the computation on a grid containing $11, 957$ vertices and $23, 512$ triangles using our extended box-dfm and ebox-dfm for all scenarios.
{\color{black}The pressure contours obtained from our proposed method for both cases are shown in Figure \ref{fig:validity_contour}.}
We present the pressure profiles along the line (0.65, 0) -- (0.65,1) in Figure \ref{fig:validity}, and compare them with the reference solution obtained from the box method for equi-dimensional model on an extremely refined mesh.

As can be seen from the figures, the model errors are small if the tangential permeability of the barriers is no greater than that of porous matrix, provided the aperture of the barriers is small.
However, for the scenarios where barriers have high tangential permeability, the treatment of neglecting tangential flow in the proposed model is not satisfactory. 
Therefore, this extended box-dfm method should not be used for barriers that have high tangential permeability, {\color{black} a situation that is expected to be of little relevance in practice}.

{\color{blue}
\begin{figure}[!htbp]
 \centering
 \begin{subfigure}[b]{0.49\textwidth}
  \includegraphics[width=\textwidth]{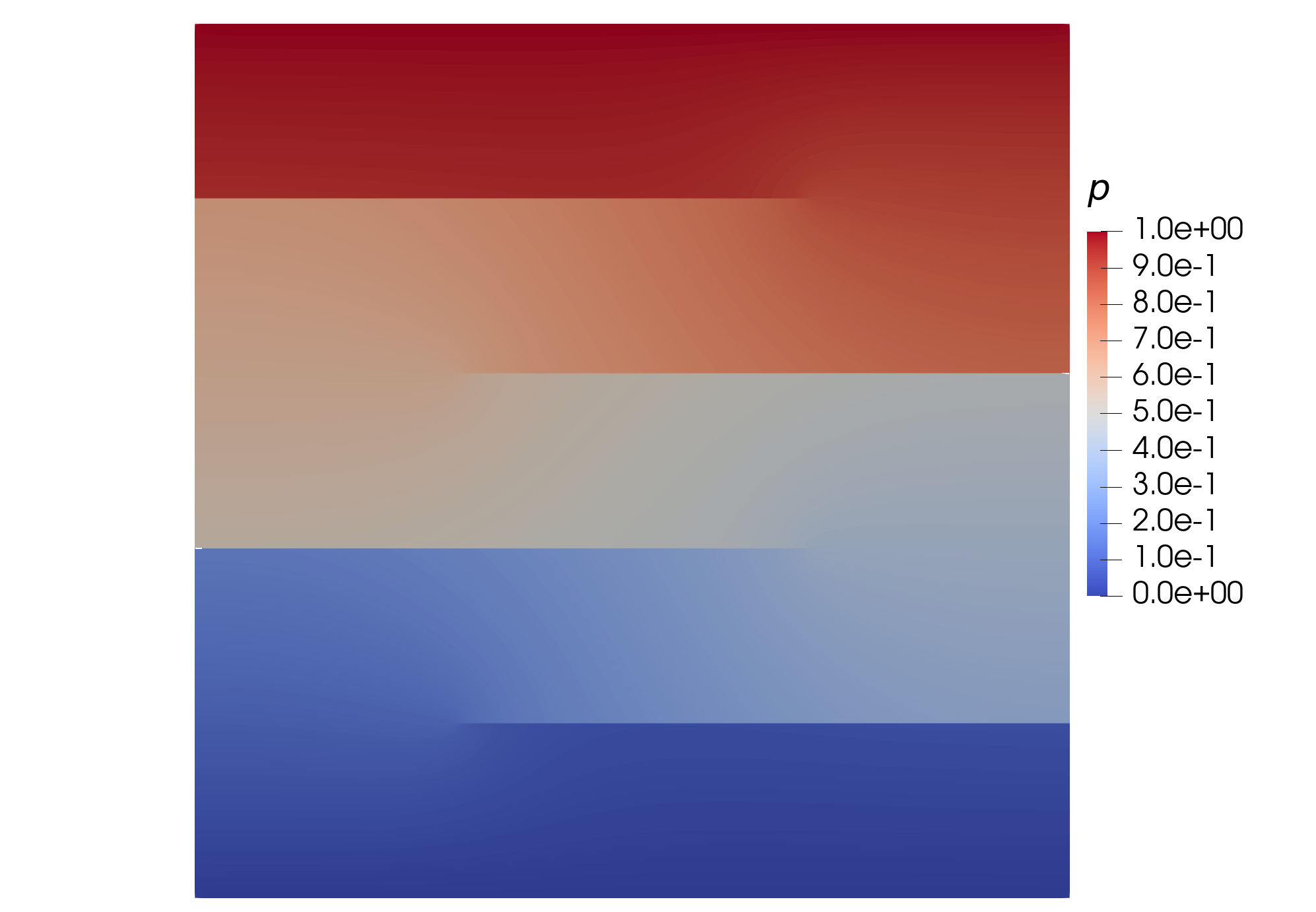}
  \caption{case (a)}
 \end{subfigure}
 \begin{subfigure}[b]{0.49\textwidth}
  \includegraphics[width=\textwidth]{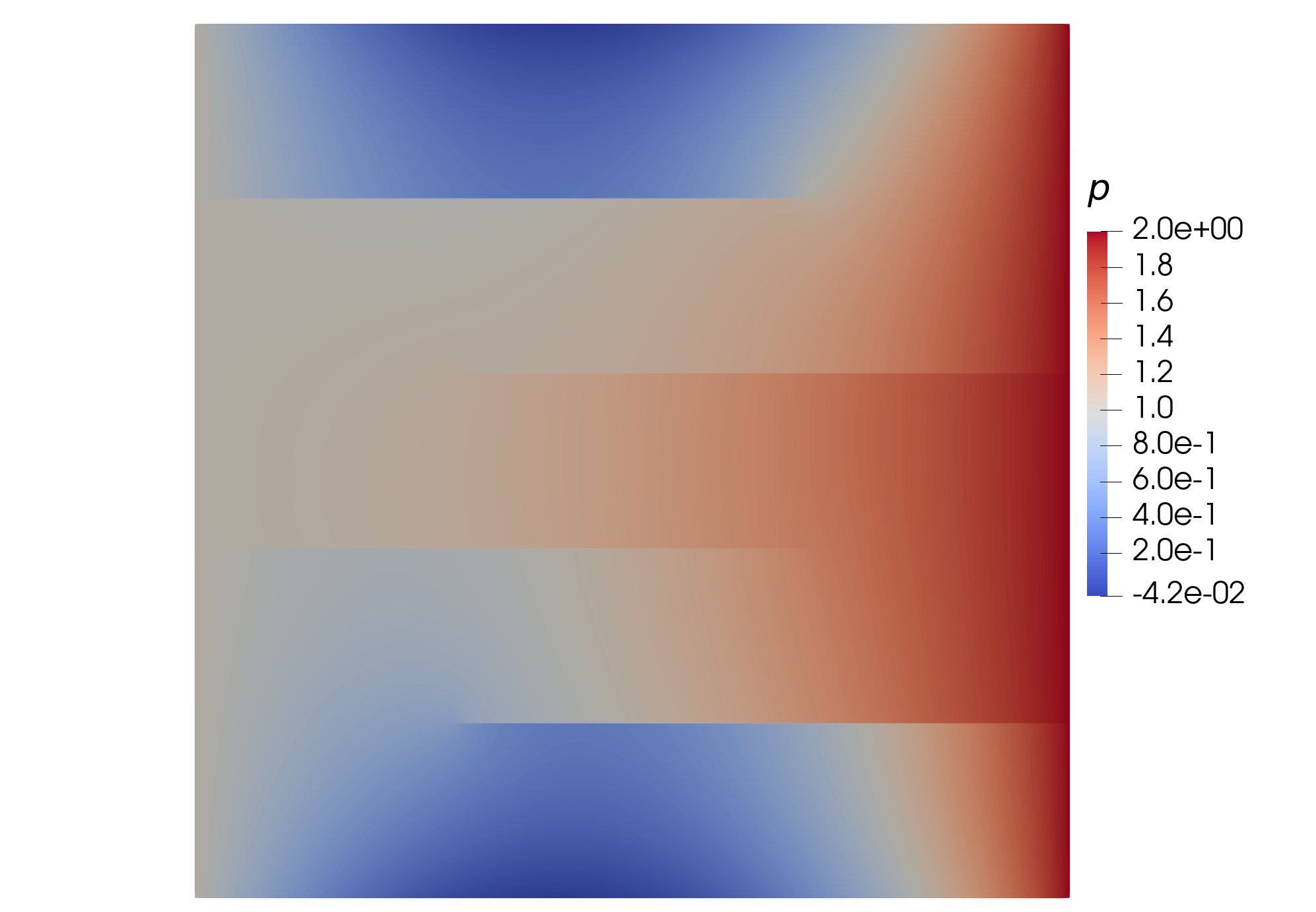}
  \caption{case (b)}
 \end{subfigure}
 \caption{\textbf{Example \ref{ex:validity}: range of validity.} 
 Visualization of the solution $p$ obtained with the proposed method.
 Both cases contain $11, 957$ vertices and $23, 512$ triangles.
 {\color{black}The tangential permeability $k_{\tau}=10^{-3}$ is adopted in both cases.}
}
 \label{fig:validity_contour}
\end{figure}}

\begin{figure}[!hbpt]
 \centering
 \begin{subfigure}[b]{0.3\textwidth}
  \includegraphics[width=\textwidth]{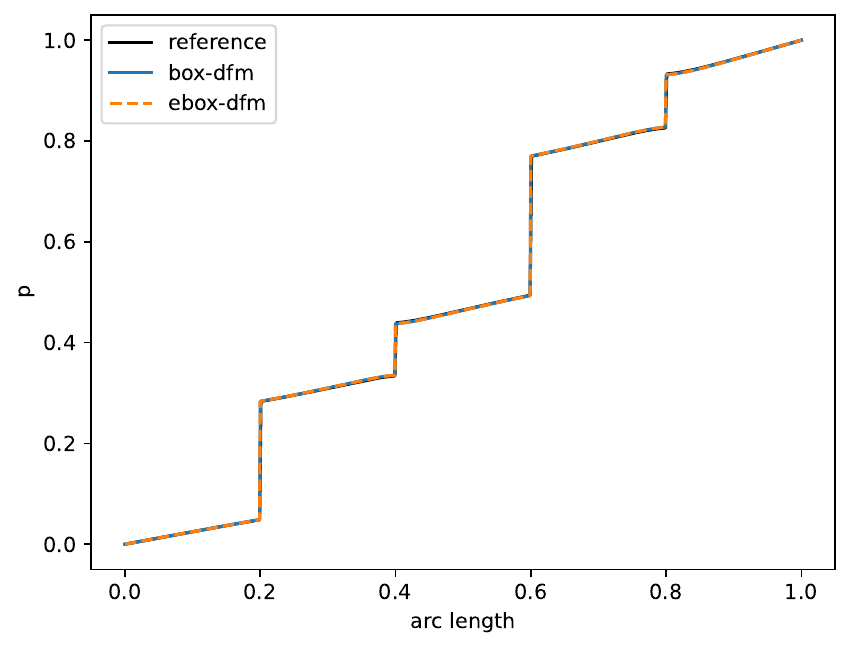}
  \caption{$k_\tau=10^{-3}$ in case (a)}
 \end{subfigure}
 \begin{subfigure}[b]{0.3\textwidth}
  \includegraphics[width=\textwidth]{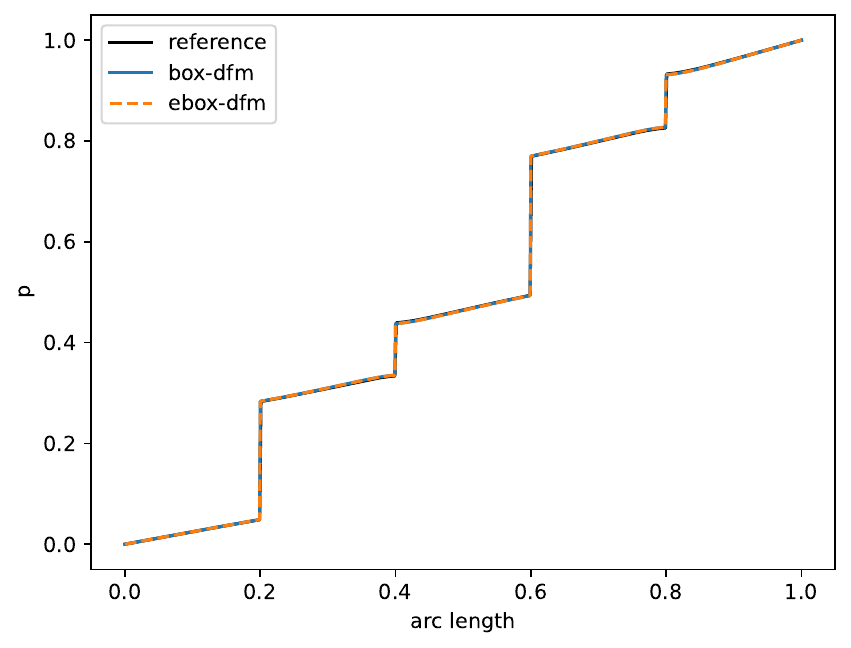}
  \caption{$k_{\tau}=1$ in case (a)}
 \end{subfigure}
\begin{subfigure}[b]{0.3\textwidth}
  \includegraphics[width=\textwidth]{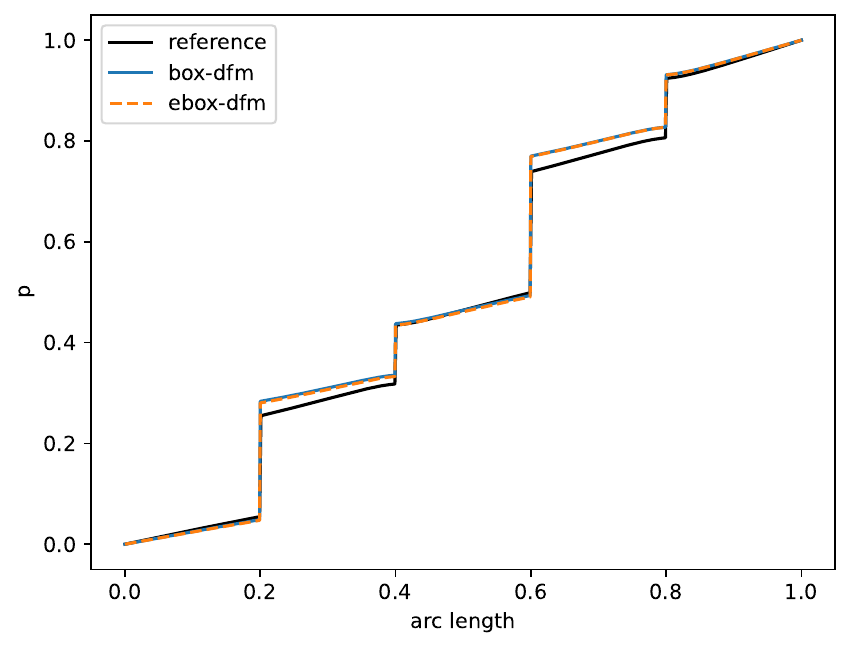}
  \caption{$k_{\tau}=10^{3}$ in case (a)}
 \end{subfigure}\\
  \begin{subfigure}[b]{0.3\textwidth}
  \includegraphics[width=\textwidth]{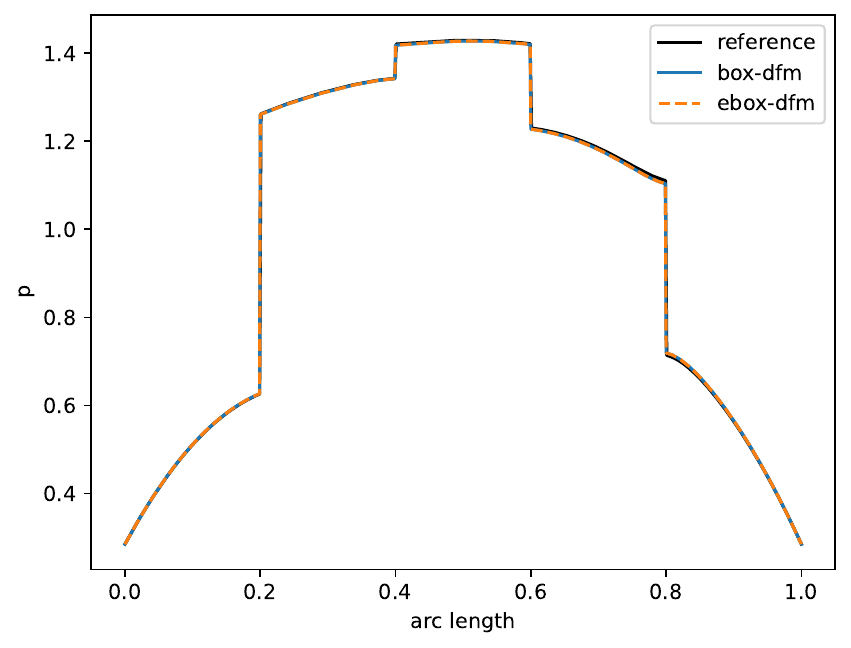}
  \caption{$k_{\tau}=10^{-3}$ in case (b)}
 \end{subfigure}
 \begin{subfigure}[b]{0.3\textwidth}
  \includegraphics[width=\textwidth]{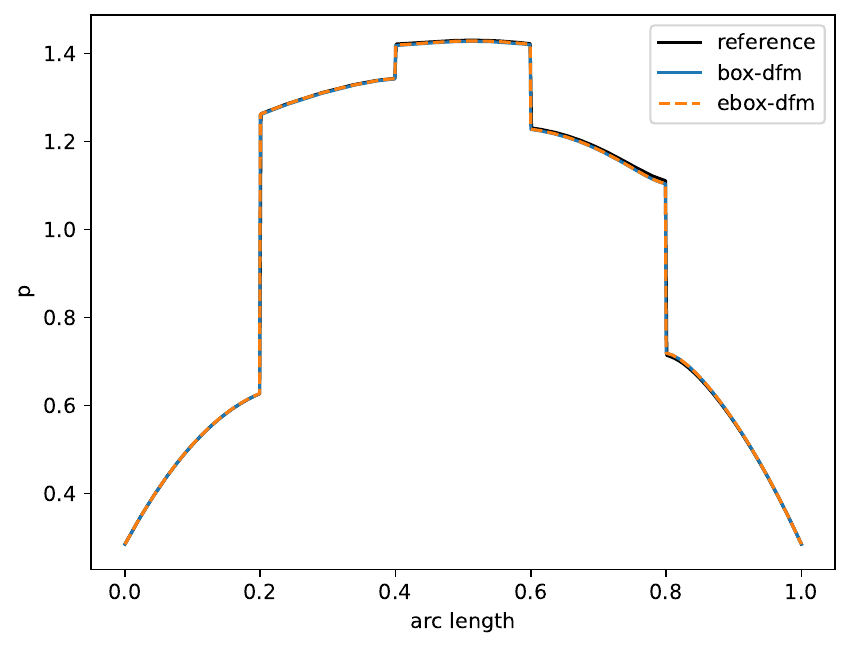}
  \caption{$k_{\tau}=1$ in case (b)}
 \end{subfigure}
\begin{subfigure}[b]{0.3\textwidth}
  \includegraphics[width=\textwidth]{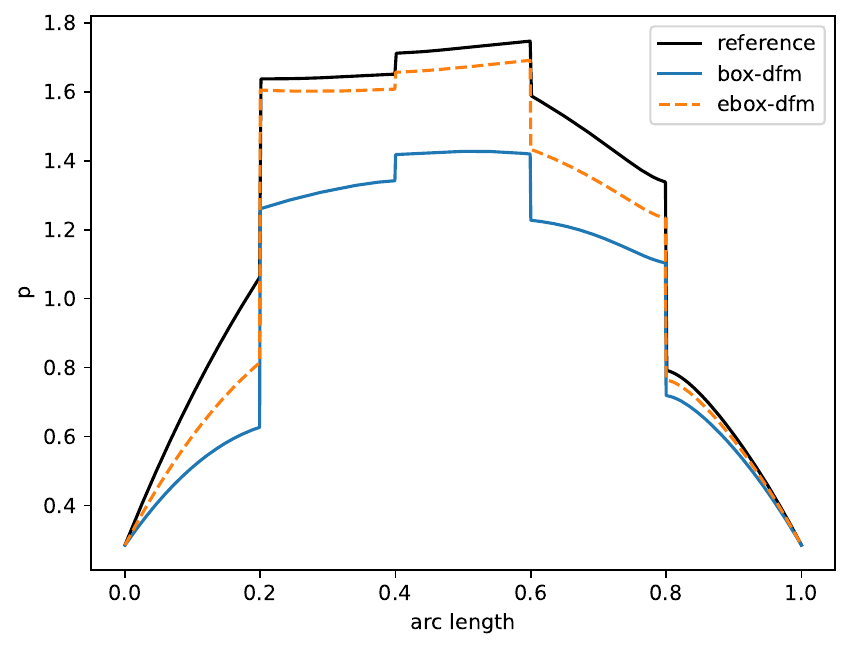}
  \caption{$k_{\tau}=10^{3}$ in case (b)}
 \end{subfigure} 
 \caption{\textbf{Example \ref{ex:validity}: range of validity} 
 Pressure profiles along the line (0.65, 0) -- (0.65,1).
 The box-dfm and ebox-dfm are computed on a grid containing $11, 957$ vertices and $23, 512$ triangles.
 The reference solution is obtained from the box method with $269, 152$ vertices and $538, 164$ triangles for the equi-dimensional model.}
 \label{fig:validity}
\end{figure}

\end{exmp}

\section{Summary}\label{Sect:summary}
{\color{black}
In this work, we have proposed a very simple extension of the Box-DFM to include blocking barriers, by adopting a broken linear Lagrange finite element space as the trial function space and using divided sub-boxes to apply the Gauss divergence theorem. The proposed method is identical to the original Box-DFM in the absence of blocking barriers and only requires minimal additional degrees of freedom, compared with other variants of Box-DFMs, to accommodate the pressure discontinuity across barriers. In addition, it inherits the local mass conservation, flexibility in the computational grid, and the symmetric positive-definiteness of the stiffness matrix from the original method.

Several widely practiced benchmark problems are tested. 
Numerical experiments exhibit good performance of our method when dealing with blocking barriers. 
It should be noted that our method is based on the interface model, which neglects the pressure jump across highly permeable fractures and the tangential flow along blocking barriers. 
Thus, the method is not suitable for handling cases of fractures with low permeability in the normal direction and high permeability in the tangential direction. 
This assertion is verified in the last numerical example. 
Moreover, it is not difficult to extend the treatment for barriers in two-phase flow. In future work, we will study the performance of such an extended Box-DFM in applications of multi-phase flows.

}

\begin{appendices}
\setcounter{figure}{0} 
\setcounter{equation}{0} 
\renewcommand{\theequation}{\thesection.\arabic{equation}} 
\renewcommand\thefigure{\Alph{section}\arabic{figure}}  
\section{The symmetry and positive definiteness of the stiffness matrix of \eqref{eq:BoxB}}\label{app:SPD}

In this appendix, we show the symmetry and positive definiteness of the stiffness matrix in \eqref{eq:BoxB}, provided that $\mathbf{K}_m$ is symmetric positive definite.
For simplicity, we assume $\mathbf{K}_m$ is a constant matrix and $\frac{k_b}{a}>0$ is a constant scalar. 
However, this condition can be relaxed to allow for piecewise constant $\mathbf{K}_m$ on cells and piecewise constant $\frac{k_b}{a}\geq0$ on edges of the primal mesh, provided that no area is enclosed by impermeable barriers.

\begin{lem}
Let $\phi_i\in\overline{V}_h$ the Lagrange basis and $B_{i}\in \overline{\mathcal{B}}$ the box associated with the node $i$, for $i=1,\ldots, \#\overline{\mathcal{B}}$.
Then we have 
\begin{equation}\label{eq:SPD_Eq}
\sum_{T\in\mathcal{T}}\int_{T} \mathbf{K}_m\nabla v\cdot\nabla\phi_i dxdy=-\int_{\partial B_i\setminus\Gamma} \mathbf{K}_m\nabla v\cdot\mathbf{n}ds,\quad \forall v\in\overline{V}_h,   
\end{equation}
where $\mathbf{n}$ denotes the unit outer normal of $\partial B_{i}$.  
\end{lem}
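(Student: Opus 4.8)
The plan is to reduce the global identity \eqref{eq:SPD_Eq} to a purely local computation on each primal cell and then reassemble, following the classical Bank--Rose/Hackbusch equivalence between the box-method flux and the $P^1$ stiffness matrix, now adapted to the broken space $\overline{V}_h$. First I would fix a node $i$ with basis function $\phi_i$ and box $B_i$, and observe that $\phi_i$ is supported only on the triangles of the angular sector around the underlying vertex $\mathbf{x}_i$ that belongs to $B_i$: for a vertex lying on $\Gamma$ this is the sector bounded by the two barrier edges (as for the copies $1',1'',1^*$ in Figure \ref{fig:mesh_c}), while for an interior vertex or a tip it is the full fan. Because $v|_T,\phi_i|_T\in P^1(T)$, both $\nabla v$ and $\nabla\phi_i$ are constant on each such $T$, and since $\mathbf{K}_m$ is constant per cell the vector $\mathbf{w}:=\mathbf{K}_m\nabla v$ is constant on $T$; the jumps of $v$ across $\Gamma$ never enter, as they occur only between triangles, not inside one.

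The heart of the argument is the per-cell identity
\[
\int_T \mathbf{K}_m\nabla v\cdot\nabla\phi_i\, dxdy = -\int_{\partial B_i\cap T}\mathbf{K}_m\nabla v\cdot\mathbf{n}\, ds,
\]
where $\partial B_i\cap T$ denotes the two dual segments joining the midpoints $\mathbf{m}_{ij},\mathbf{m}_{ik}$ of the edges of $T$ at $\mathbf{x}_i$ to the centroid $\mathbf{c}$. To prove it I would use the constancy of $\mathbf{w}$ to write the left side as $\mathbf{w}\cdot\int_T\nabla\phi_i\,dxdy$, and then evaluate the gradient integral by the gradient theorem: since $\phi_i$ vanishes on the edge opposite $\mathbf{x}_i$ and has edge-average $\tfrac{1}{2}$ on each of the two edges through $\mathbf{x}_i$,
\[
|T|\,\nabla\phi_i = \int_{\partial T}\phi_i\,\mathbf{n}_T\,ds = \tfrac{1}{2}\ell_{ij}\,\mathbf{n}_{ij} + \tfrac{1}{2}\ell_{ik}\,\mathbf{n}_{ik},
\]
with $\ell_{ij},\ell_{ik}$ the lengths of those two edges. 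Applying the closed-curve identity $\oint_{\partial(B_i\cap T)}\mathbf{n}\,ds=0$ to the quadrilateral $B_i\cap T$, and noting that its outward normal agrees with $\mathbf{n}_T$ on the two half-edges $\mathbf{x}_i\mathbf{m}_{ij}$, $\mathbf{x}_i\mathbf{m}_{ik}$ and with the outward normal of $B_i$ on the two dual segments, I obtain $\int_{\partial B_i\cap T}\mathbf{n}\,ds = -|T|\nabla\phi_i$; contracting with the constant $\mathbf{w}$ then gives the per-cell identity with the claimed sign. I would \emph{emphasize} that one cannot simply integrate by parts on $B_i\cap T$, since that sub-box is only one third of $T$ in area, so the equivalence is a genuine algebraic identity rather than a direct consequence of the divergence theorem.

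Finally I would sum the per-cell identity over all triangles in the sector of $\phi_i$. On the left this reconstitutes $\sum_{T\in\mathcal{T}}\int_T \mathbf{K}_m\nabla v\cdot\nabla\phi_i$. On the right, the dual segments of consecutive triangles in the fan chain together at the shared internal-edge midpoints into a single polygonal curve; because $\Gamma$ is aligned with primal edges while the dual segments lie strictly inside the cells, no dual segment ever lies on $\Gamma$, and the two loose ends of the chain terminate at the midpoints of the bounding barrier edges. Hence the assembled curve is exactly $\partial B_i\setminus\Gamma$, the remaining barrier half-edges (from $\mathbf{x}_i$ to those midpoints) lying on $\Gamma$ and being excluded, which is \eqref{eq:SPD_Eq}. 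For an undivided box the chain simply closes into the full $\partial B_i$, and since $\partial B_i\cap\Gamma=\emptyset$ the statement reduces to the classical one.

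I anticipate that the main obstacle is the bookkeeping at the barrier: verifying that the per-cell dual segments assemble to precisely $\partial B_i\setminus\Gamma$, with the barrier-aligned half-edges correctly excluded from the flux, and keeping the orientation of $\mathbf{n}$ consistent so that the global minus sign is the one claimed. The local computation itself is routine once the gradient theorem and the closed-curve identity are in place, and it uses only that $\mathbf{K}_m$ is constant on each cell, not its symmetry.
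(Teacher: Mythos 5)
Your proof is correct and follows essentially the same route as the paper: establish the per-cell identity $\int_T \mathbf{K}_m\nabla v\cdot\nabla\phi_i\,dxdy=-\int_{\partial B_i\cap T^o}\mathbf{K}_m\nabla v\cdot\mathbf{n}\,ds$ and sum over the cells in the support of $\phi_i$. The only difference is that the paper simply cites this local identity from Hackbusch's work, whereas you derive it from the gradient theorem and the closed-curve identity and spell out the bookkeeping showing that the dual segments assemble to $\partial B_i\setminus\Gamma$ — details the paper leaves implicit.
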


\begin{proof}
It is proved in \cite{hackbusch1989first} that,
\begin{equation}
\int_{T}\mathbf{K}_m\nabla v\cdot\nabla\phi_{i}dxdy=-\int_{\partial B_i\cap T^{o}}\mathbf{K}_m\nabla v\cdot\mathbf{n} ds,
\end{equation}
where $T^o$ is the interior of the cell $T\in\mathcal{T}$.
Summing over all $T\in\mathcal{T}$, one will get \eqref{eq:SPD_Eq}.
\end{proof}

Based on the above lemma, we can prove the following result.

\begin{thm}
Assume $\mathbf{K}_m$ is a symmetric positive-definite constant matrix and $\frac{k_b}{a}$ is a positive constant scalar.
The stiffness matrix in the method \eqref{eq:BoxB} is symmetric positive-definite.
\end{thm}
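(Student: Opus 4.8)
The plan is to write the stiffness matrix as $A=A^{(m)}+A^{(b)}$, separating the bulk flux term $-\int_{\partial B\setminus\Gamma}\mathbf{K}_m\nabla p\cdot\mathbf{n}\,ds$ from the barrier flux term $-\int_{\partial B\cap\Gamma}\frac{k_b}{a}(p^+-p^-)\,ds$ of \eqref{eq:BoxB}, and to prove each summand symmetric and positive semi-definite. Throughout I identify a coefficient vector $\mathbf{v}$ with the function $v=\sum_j v_j\phi_j\in\overline{V}_h$, and I evaluate the bilinear form $\mathbf{w}^{\top}A\mathbf{v}$ as the sum over all boxes $B_i$ of $w_i$ times the left-hand side of the $i$-th equation of \eqref{eq:BoxB} taken at $p=v$.

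For the bulk part the work is essentially done by the Lemma: taking $v=\phi_j$ in \eqref{eq:SPD_Eq} gives
\[
A^{(m)}_{ij}=-\int_{\partial B_i\setminus\Gamma}\mathbf{K}_m\nabla\phi_j\cdot\mathbf{n}\,ds=\sum_{T\in\mathcal{T}}\int_T\mathbf{K}_m\nabla\phi_j\cdot\nabla\phi_i\,dxdy,
\]
which is symmetric because $\mathbf{K}_m$ is, and which assembles to $\mathbf{v}^{\top}A^{(m)}\mathbf{v}=\sum_{T\in\mathcal{T}}\int_T\mathbf{K}_m\nabla v\cdot\nabla v\,dxdy\ge0$, vanishing exactly when $\nabla v\equiv0$ on every cell.

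The substance of the proof is the barrier part, where I would reorganize the box-indexed sum into a sum over the edges of $\Gamma$. Each portion of $\Gamma$ lying inside a box $B_{v_0}$ is a common face of the two sub-boxes into which $\Gamma$ cuts $B_{v_0}$; since the inside/outside roles of $p^-$ and $p^+$ are interchanged between the equations of these two sub-boxes, their barrier terms do not cancel but add, contributing $(w^+_{v_0}-w^-_{v_0})\int_{\partial B_{v_0}\cap\Gamma}\frac{k_b}{a}(p^+-p^-)\,ds$. Collecting these over an edge $e=v_0v_1$ of $\Gamma$, whose two half-edges belong to the boxes at $v_0$ and $v_1$, produces a local form $\frac{k_b}{a}\,\mathbf{W}_e^{\top}M_e\,\mathbf{P}_e$ in the endpoint pressure jumps $\mathbf{P}_e=(p^+_{v_0}-p^-_{v_0},\,p^+_{v_1}-p^-_{v_1})^{\top}$ and the analogous test jumps $\mathbf{W}_e$, with $M_e$ a symmetric positive-definite $2\times2$ matrix obtained from the elementary integrals of the (piecewise linear) jump over the two half-edges. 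Summing over edges shows $A^{(b)}$ is symmetric with $\mathbf{v}^{\top}A^{(b)}\mathbf{v}\ge0$, vanishing precisely when every nodal jump vanishes, i.e. when $v$ is continuous across $\Gamma$; this is the discrete counterpart of the continuous energy $\int_\Gamma\frac{k_b}{a}\jl v\jr^2\,ds$. I expect this reorganization to be the main obstacle: one must fix the orientation of the normals and the $\pm$ convention so that the two sub-box terms add with the correct sign, confirm that $M_e$ is genuinely positive-definite rather than merely semi-definite, and separately dispose of the tip and intersection vertices, which by definition carry a single, undivided box and so contribute no barrier term.

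Finally, combining the two parts,
\[
\mathbf{v}^{\top}A\mathbf{v}=\sum_{T\in\mathcal{T}}\int_T\mathbf{K}_m\nabla v\cdot\nabla v\,dxdy+\sum_{e\subset\Gamma}\frac{k_b}{a}\,\mathbf{P}_e^{\top}M_e\,\mathbf{P}_e\ge0
\]
is symmetric and non-negative. Its vanishing forces $\nabla v\equiv0$ on every cell (so $v$ is constant on each connected component of $\Omega_h\setminus\Gamma$) together with the vanishing of all barrier jumps (so $v$ is continuous across $\Gamma$); on the connected domain $\Omega_h$ this leaves only the global constants, which are eliminated by the Dirichlet data on $\Gamma_D$. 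Hence $\mathbf{v}^{\top}A\mathbf{v}=0$ implies $\mathbf{v}=0$, and $A$ is symmetric positive-definite.
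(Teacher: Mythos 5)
Your proposal follows essentially the same route as the paper's proof: decompose the stiffness matrix into the bulk part, handled via the Lemma as the usual $P^1$ finite-element bilinear form, and the barrier part, reassembled edge-by-edge into a quadratic form in the two endpoint jumps, with positive definiteness then following from connectedness and the Dirichlet data. The one step you leave asserted rather than computed --- that the local $2\times2$ jump matrix $M_e$ is genuinely positive definite --- is exactly what the paper verifies explicitly, arriving at $\tfrac18\int_e\tfrac{k_b}{a}\left(2\jl v\jr_a^2+2\jl v\jr_b^2+(\jl v\jr_a+\jl v\jr_b)^2\right)ds$, i.e. $M_e\propto\bigl(\begin{smallmatrix}3/4&1/4\\1/4&3/4\end{smallmatrix}\bigr)$, which is indeed symmetric positive definite.
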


\begin{proof}

Let $\phi_i\in\overline{V}_h$ the Lagrange basis and $B_{i}\in \overline{\mathcal{B}}$ the box associated with the node $i$, for $i=1,\ldots, N$, where $N= \#\overline{\mathcal{B}}$.
One can calculate that, for the first term in \eqref{eq:BoxB},
\begin{equation*}
\begin{split}
-\int_{\partial B_i\setminus\Gamma}\mathbf{K}_m\nabla\phi_{j}\cdot\mathbf{n}ds = \sum_{T\in\mathcal{T}}\int_{T}\mathbf{K}_m\nabla\phi_{j}\cdot\nabla\phi_{i}dxdy = -\int_{\partial B_j\setminus\Gamma}\mathbf{K}_m\nabla\phi_{i}\cdot\mathbf{n}ds.
\end{split}
\end{equation*}
As for the second term in \eqref{eq:BoxB}, it is non-zero only if the nodes $i$ and $j$ are at the same vertex or on the same edge on $\Gamma$.
We discuss three possibilities ($i\neq j$):
\begin{enumerate}
\item If the nodes $i$ and $j$ are on the same vertex, then 
\begin{equation*}
    -\int_{\partial B_i\cap\Gamma} \frac{k_b}{a} (\phi_{j}^{+}-\phi_{j}^{-})ds = -\frac34\int_{\partial B_i\cap\partial B_{j}}\frac{k_b}{a}ds = -\int_{\partial B_j\cap\Gamma} \frac{k_b}{a} (\phi_{i}^{+}-\phi_{i}^{-})ds.
\end{equation*}
\item If the nodes $i$ and $j$ are two vertices of an edge $e$ and on the same side of $\Gamma$, then 
\begin{equation*}
    -\int_{\partial B_i\cap\Gamma} \frac{k_b}{a} (\phi_{j}^{+}-\phi_{j}^{-})ds = \frac{1}{8}\int_{e}\frac{k_b}{a}ds = -\int_{\partial B_j\cap\Gamma} \frac{k_b}{a} (\phi_{i}^{+}-\phi_{i}^{-})ds.
\end{equation*}
\item If the node $i$ and $j$ are two vertices of an edge $e$ and on the two sides of $\Gamma$, then 
\begin{equation*}
    -\int_{\partial B_i\cap\Gamma} \frac{k_b}{a} (\phi_{j}^{+}-\phi_{j}^{-})ds = -\frac18\int_{e}\frac{k_b}{a}ds = -\int_{\partial B_j\cap\Gamma} \frac{k_b}{a} (\phi_{i}^{+}-\phi_{i}^{-})ds.
\end{equation*}
\end{enumerate}

In all cases, we have 
\begin{equation*}
    -\int_{\partial B_i\cap\Gamma} \frac{k_b}{a} (\phi_{j}^{+}-\phi_{j}^{-})ds = -\int_{\partial B_j\cap\Gamma} \frac{k_b}{a} (\phi_{i}^{+}-\phi_{i}^{-})ds.
\end{equation*}
Therefore, the stiffness matrix of the method \eqref{eq:BoxB} is symmetric.

Take an arbitrary grid function $\{v_{i}\}_{i=1}^{N}$, and denote $v=\sum_{i=1}^{N}v_{i}\phi_{i}\in\overline{V}_h$.
One can calculate that, for the first term in \eqref{eq:BoxB}, 
\begin{equation*}
\begin{split}
-\sum_{i,j=1}^{N}v_{i}v_j\int_{\partial B_{i}\setminus\Gamma}\mathbf{K}_m\nabla\phi_{j}\cdot\mathbf{n}ds &= -\sum_{i=1}^{N}v_{i}\int_{\partial B_{i}\setminus\Gamma}\mathbf{K}_m\nabla v\cdot\mathbf{n}ds \\
&= \sum_{i=1}^{N}v_{i}\sum_{T\in\mathcal{T}}\int_{T}\mathbf{K}_m\nabla v\cdot\nabla\phi_{i}dxdy\\
&=\sum_{T\in\mathcal{T}}\int_{T}\mathbf{K}_m\nabla v\cdot\nabla vdxdy\geq 0,
\end{split}
\end{equation*}
where the equality holds only if $v$ is constant on every subdomain separated by $\Gamma$ (if no area is enclosed by $\Gamma$, then $v$ must be a constant on the whole domain.)

To facilitate the discussion on the second term in \eqref{eq:BoxB}, we define $\mathcal{E}^\Gamma=\{e:e=\partial T \cap \Gamma, \, \forall T \in \mathcal{T}_h\}$ the collection of edges on $\Gamma$, and let $\jl v \jr_{a} = v^{+}_a-v^{-}_{a}, \jl v \jr_b=v_{b}^{+}-v_{b}^{-}$ the jumps of $v$ at $a$ and $b$ on the two sides of $e\in\mathcal{E}^{\Gamma}$, where $a$ and $b$ are the two vertices of $e$ and $\pm$ are the limits specified for every $e$.
Then, one can calculate that, 
\begin{equation*}
\begin{split}
&-\sum_{i,j=1}^{N}v_{i}v_{j}\int_{\partial B_i\cap\Gamma}\frac{k_b}{a}(\phi_{j}^{+}-\phi_{j}^{-})ds = -\sum_{i=1}^{N}v_{i}\int_{\partial B_i\cap\Gamma}\frac{k_b}{a}(v^{+}-v^{-})ds\\
&=\frac12\sum_{e\in\mathcal{E}^{\Gamma}}\int_{e}\frac{k_b}{a}\left(\jl v \jr_a(\frac34\jl v\jr_a+\frac14\jl v\jr_b)+\jl v\jr_b(\frac14\jl v\jr_a+\frac34 \jl v\jr_b)\right)ds\\
&=\frac{1}{8}\sum_{e\in\mathcal{E}^{\Gamma}}\int_{e}\frac{k_b}{a}\left( 2\jl v\jr_a^2+2\jl v\jr_b^2+(\jl v\jr_a+\jl v\jr_b)^2 \right)ds\geq 0,
\end{split} 
\end{equation*}
where the equality holds only if the jump of $v$ vanish on $\Gamma$, i.e., $v$ is continuous across $\Gamma$.

Summing up the two inequalities, one can conclude that
\begin{equation*}
-\sum_{i,j=1}^{N}v_{i}v_j\left(\int_{\partial B_{i}\setminus\Gamma}\mathbf{K}_m\nabla\phi_{j}\cdot\mathbf{n}ds+ \int_{\partial B_i\cap\Gamma}\frac{k_b}{a}(\phi_{j}^{+}-\phi_{j}^{-})ds\right)\geq0,
\end{equation*}
where the equality holds only if $v$ is constant on the whole domain.
Therefore, the stiffness matrix of the method \eqref{eq:BoxB} is positive-definite, if the Dirichlet condition on a non-zero measure boundary is specified.
\end{proof}

\end{appendices}

\end{document}